\newcommand{\N}{\mathbb{N}}
\newcommand{\C}{\mathbb{C}}
\newcommand{\inv}{^{-1}}
\newcommand{\ol}{\overline}
\newcommand{\GL}{\mathrm{GL}}
\newcommand{\G}{\mathrm{G}}
\newcommand{\SO}{\mathrm{SO}}
\newcommand{\comment}[1]{}
\newtheorem{thm}{Theorem}[section]
\newtheorem{cor}[thm]{Corollary}
\newtheorem{lemma}[thm]{Lemma}
\newtheorem{prop}[thm]{Proposition}
\newtheorem {conj}[thm]{Conjecture}
\newtheorem {ques/conj}[thm]{Question/Conjecture}
\numberwithin{equation}{section}
\begin{document}

\title[Local Converse Theorem for quasi-split $\SO_{2l}$]{A local converse theorem for quasi-split even special orthogonal groups}

\author{Alexander Hazeltine}
\address{Department of Mathematics\\
University of Michigan\\
Ann Arbor, MI, 48109, USA}
\email{ahazelti@umich.edu}

\subjclass[2020]{Primary 11F70, 22E50; Secondary 11F85}

\date{\today}

\dedicatory{}

\keywords{Admissible Representations,
Jacquet's Conjecture, Local Converse Theorem, Quasi-Split Even Special Orthogonal Groups}

\begin{abstract}
We give a direct proof of the local converse theorem for quasi-split non-split $\SO_{2l}$ over a local non-Archimedean field of characteristic $p\neq 2$, applying the 
theory of Howe vectors and partial Bessel functions. 
\end{abstract}

\maketitle


\section{Introduction}

Let $\G$ be a connected reductive group, $F$ be a non-Archimedean local field of characteristic zero, and let $G=\G(F)$. Converse theorems seek to uniquely identify representations of $G$ from its invariants. One such invariant is known as the local gamma factor. These factors are arithmetic in nature and play very important roles in the Langlands program. 
More precisely, let $\pi$ be an irreducible generic representation of $G$. The family of local twisted gamma factors is denoted by $\gamma(s, \pi \times \tau, \psi)$, where $\tau$ is an irreducible generic representation of $\GL_n(F)$, $\psi$ an additive character of $F,$ and $s\in\mathbb{C}$. These can be defined using Rankin--Selberg convolution (e.g., \cite{GRS98, JPSS83, Kap10, Kap12, Kap13a, Kap15, Sou93, ST15}) or the Langlands--Shahidi method (\cite{Sha84, Sha90}). For $\GL_l,$ Jacquet conjectured the following.

\begin{conj}[Local Converse Theorem for $\GL_l$]\label{lcp}
Let $\pi,\pi'$ be irreducible generic representations
of $\GL_l(F)$. Suppose that they have the same central character.
If
\[
\gamma(s, \pi \times \tau, \psi) = \gamma(s, \pi' \times \tau, \psi),
\]
for all irreducible
generic representations $\tau$ of $\GL_n(F)$ with $1 \leq n \leq 
\lfloor\frac{l}{2}\rfloor$, then $\pi \cong \pi'$.
\end{conj}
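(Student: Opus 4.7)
The plan is to adapt the Howe vector and partial Bessel function machinery that drives this paper's main theorem, following in spirit the approaches of Jacquet--Liu and Chai. First, I would fix a generic character $\psi$ of the standard upper unipotent subgroup $N \subset \GL_l(F)$, realize $\pi$ and $\pi'$ in their Whittaker models, and construct Howe vectors $v_m \in \pi$ and $v_m' \in \pi'$ of increasing level $m$. Their Whittaker functions concentrate near the longest Weyl element $w_l$, and the associated partial Bessel functions $B_m, B_m'$ stabilize, as $m\to\infty$, on any fixed compact subset of $N\backslash \GL_l(F)/N$. Denote the stabilized limits by $J_\pi$ and $J_{\pi'}$. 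Since $\pi$ and $\pi'$ share a central character, $J_\pi$ and $J_{\pi'}$ agree on the identity Bruhat cell, which serves as the base case of the induction below.

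Second, I would use the Rankin--Selberg local functional equation to convert the hypothesis $\gamma(s,\pi\times\tau,\psi) = \gamma(s,\pi'\times\tau,\psi)$ into vanishing statements for $J_\pi - J_{\pi'}$: pairing Howe vectors against Whittaker functions of $\tau$ and passing to the stable limit, the functional equation reduces to identities of the form
\[
\int_{\GL_n(F)} \bigl(J_\pi(g) - J_{\pi'}(g)\bigr)\,W_\tau(g)\,dg = 0
\]
for broad classes of $W_\tau$ and all $\tau$ with $1 \le n \le \lfloor l/2\rfloor$. One then runs an induction over the set of \emph{Bessel-relevant} Weyl elements of $\GL_l$, ordered by Bruhat order: for each such $w$, produce a choice of $n\le\lfloor l/2\rfloor$ and a test datum $W_\tau$ that isolates $J_\pi(w) - J_{\pi'}(w)$, while the inductive hypothesis kills contributions from Weyl elements already handled. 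The cutoff $\lfloor l/2\rfloor$ is sharp because of a contragredient symmetry relating $\gamma(s,\pi\times\tau,\psi)$ and $\gamma(1-s,\pi^\vee\times\tau^\vee,\overline{\psi})$, which automatically supplies information about the ``large half'' of Weyl elements once the ``small half'' is known. Once $J_\pi = J_{\pi'}$ on every Bessel-relevant Weyl element, density of the open Bruhat cell together with the uniqueness of Whittaker models yields $\pi \cong \pi'$.

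The main obstacle is the inductive step of choosing, for each Bessel-relevant Weyl element $w$, a representation $\tau$ of rank at most $\lfloor l/2\rfloor$ and a test vector that cleanly isolates $J_\pi(w) - J_{\pi'}(w)$. This requires a careful asymptotic analysis of Howe vectors on each Bruhat cell together with precise support constraints that guarantee one never needs $n > \lfloor l/2\rfloor$, at which point the contragredient symmetry must take over. Organizing this combinatorial--analytic bookkeeping uniformly across all $w$, and verifying the sharpness of the $\lfloor l/2\rfloor$ cutoff along the way, is the delicate technical core of the argument.
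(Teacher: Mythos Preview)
The paper does not prove this statement. Conjecture~1.1 appears in the introduction solely as background: immediately after stating it, the paper writes that it ``was proved by Chai (\cite{Cha19}), and by Jacquet and Liu (\cite{JL18a}), independently, using different analytic methods,'' and then turns to its actual goal, the local converse theorem for quasi-split non-split $\SO_{2l}$. There is thus no proof in this paper to compare your proposal against.

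As a freestanding sketch your outline is in the right spirit---Howe vectors and partial Bessel functions are indeed the engine of both cited proofs and of this paper's $\SO_{2l}$ argument---but it is a plan, not a proof, and the gap is precisely where you locate it. A straightforward Bessel-support induction of the type this paper runs for $\SO_{2l}$ (Theorems~\ref{l-2 theorem}--\ref{l thm}) yields a converse theorem for $\GL_l$ with twists up to $\GL_{l-2}$ or $\GL_{l-1}$, not $\GL_{\lfloor l/2\rfloor}$. Closing that gap is the entire content of \cite{Cha19} and \cite{JL18a}, and your appeal to a ``contragredient symmetry'' that automatically handles the large half of the Weyl elements does not capture either mechanism: Jacquet--Liu exploit a functional equation for the partial Bessel functions themselves together with a specific involutive pairing on relevant Weyl elements, while Chai's argument proceeds through Bessel functions and spectral decomposition in a rather different way. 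Without one of these ingredients made precise, the induction cannot be pushed below $n=l-2$.
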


Conjecture \ref{lcp} was proved by Chai (\cite{Cha19}), and by Jacquet and Liu (\cite{JL18a}), independently, using different analytic methods. It is known as the local converse theorem for $\GL_l.$ For general quasi-split reductive groups over non-Archimedean local fields, Jiang gave a general conjecture for local converse theorems for generic representations in \cite[Conjecture 3.7]{Jia06} and many cases have since been established. A non-exhaustive list of such results includes  $\SO_{2l+1}$ (\cite{JS03, Jo22}), $\mathrm{Sp}_{2l}$ (\cite{Jo22, Zha18}), $\mathrm{U}(l,l)$ (\cite{Mor18, Zha18}), $\mathrm{U}_{2l+1}$ (\cite{Zha19}), split $\SO_{2l}$ (\cite{HKK23, HL22b}), and $\widetilde{\mathrm{Sp}}_{2l}$ (\cite{Haa22}). 

We note that, by applying Langlands functoriality and Conjecture \ref{lcp}, several of the above cases would be implied by the works of Arthur and Mok (\cite{Art13, Mok15}). However, it is desirable to have direct proofs which reflect the intrinsic properties of the groups as is carried out in \cite{HL22b, Jo22, Zha18, Zha19}. We remark more on this point later.

The main goal of this article is to establish the local converse theorem for the quasi-split non-split group $\SO_{2l}$ directly. We note that the case for split and quasi-split non-split $\SO_{2l}$ was also proved independently in \cite{HKK23} using the theta correspondence (similarly to \cite{Haa22} but using more refined arguments). However, the approaches of this paper (and \cite{HL22b}) contrast significantly with that of \cite{HKK23}. In particular, we utilize the properties of partial Bessel functions to compute the zeta integrals directly.

As in the split case, one difficulty in the local converse theorem for quasi-split non-split $\SO_{2l}$ is the existence of the outer automorphism. We show that the twisted local $\gamma$-factors determine generic representations uniquely up to conjugation by the outer automorphism. 
More precisely, in this paper, we prove the following theorem. 

\begin{thm}[The Local Converse Theorem for quasi-split non-split  $\SO_{2l}$, supercuspidal case]\label{converse thm intro} Let $F$ be a non-Archimedean field of characteristic $p\neq 2$ and $\pi$ and $\pi^\prime$ be irreducible $\psi$-generic supercuspidal representations of quasi-split non-split $\SO_{2l}(F)$ with the same central character $\omega$. If $$\gamma(s, \pi\times\tau,\psi)=\gamma(s, \pi^\prime\times\tau,\psi),$$ 
for all irreducible generic representations $\tau$ of $\GL_n(F)$ with $n\leq l,$
then $\pi\cong\pi'$ or $\pi\cong \pi'^c,$
 where $c$ is the outer automorphism. 
\end{thm}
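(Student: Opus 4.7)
The plan is to follow the Howe-vector and partial Bessel function framework developed for the analogous theorems for split $\SO_{2l}$ in \cite{HL22b} and for $\SO_{2l+1}$, $\mathrm{Sp}_{2l}$, and $\mathrm{U}_{2l+1}$ in \cite{Jo22, Zha18, Zha19}, adapted to the quasi-split non-split form. First I would fix a Whittaker datum $(U, \psi)$ for $G = \SO_{2l}(F)$ and, for each of the supercuspidal generic representations $\pi$, $\pi'$, construct a sequence of Howe vectors $v_m \in V_\pi$, $v'_m \in V_{\pi'}$ indexed by a level $m$, each invariant under an explicit congruence-type subgroup $J_m$ that shrinks with $m$. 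To these I associate partial Bessel functions
\begin{equation*}
  B_m^\pi(g) = \int_{U_m} \langle \pi(ug) v_m, \widetilde{v}_m\rangle \psi^{-1}(u)\, du,
\end{equation*}
which, for $m$ large, are supported on a controlled union of double cosets of the Bruhat decomposition of $G$.

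Next I would plug these Howe vectors into the Rankin--Selberg zeta integrals $\Psi(s, W_{v_m}, f_\tau)$ against a generic $\tau$ of $\GL_n(F)$ with $n\le l$ and a cleverly chosen Schwartz section $f_\tau$. As in \cite{HL22b}, the Howe vector asymptotics force the integral to localize, for $m$ sufficiently large, onto a short list of Weyl-element double cosets $U \dot w \cdot (\text{parabolic subgroup})$; each contribution becomes the pairing of a partial Bessel function with a matrix coefficient of $\tau$. The hypothesis $\gamma(s,\pi\times\tau,\psi)=\gamma(s,\pi'\times\tau,\psi)$ combined with the multiplicativity of the $\gamma$-factor and the functional equation then produces, for every generic $\tau$ of $\GL_n(F)$ with $n\le l$, an equality $B_m^\pi(\dot w) = B_m^{\pi'}(\dot w)$ on the finite set of Weyl elements arising in the support. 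Varying $\tau$ across $n=1,\dots,l$ is what lets one separate the cells and extract these pointwise equalities one at a time.

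With the Weyl-cell equalities established, I would invoke the Cogdell--Shahidi--Tsai style section argument, as formulated in \cite{Jo22, HL22b}, to propagate them to an equality of Bessel functions on a Zariski dense open subset of the big Bruhat cell. Concretely, for each relevant $w$ one constructs an algebraic section sweeping out a neighborhood of $\dot w$ inside $U \dot w B$ via root subgroups, and uses the asymptotic (germ) expansion of $B_m^\pi$ near $\dot w$ to compare both sides. Once the Bessel functions of $\pi$ and $\pi'$ agree on a dense subset, uniqueness of the Bessel model for supercuspidal generic representations forces $\pi \cong \pi'$, \emph{except} when the reasoning only reaches a Weyl element that lies outside the image of the Weyl group of $\SO_{2l}$ inside that of $\mathrm{O}_{2l}$. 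That missing element is precisely what the outer automorphism $c$ of $\SO_{2l}$ produces, and it accounts for the dichotomy $\pi \cong \pi'$ or $\pi \cong \pi'^c$.

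The main obstacle I anticipate is twofold, both coming from the quasi-split non-split structure. First, because the torus and several root subgroups are twisted by the quadratic extension $E/F$ defining the form, the Bruhat decomposition, Howe-vector estimates, and support properties of $B_m^\pi$ must be reworked in a $\mathrm{Gal}(E/F)$-equivariant way; keeping uniform control over the relevant double cosets and the integrals of matrix coefficients of $\tau$ is technically delicate. Second, identifying the outer automorphism $c$ intrinsically inside the quasi-split group — induced by an element of $\mathrm{O}_{2l}(F)\setminus \SO_{2l}(F)$ that preserves $(U,\psi)$ up to conjugacy — and verifying that the unique Weyl element not reached by the section argument is exactly the one related by $c$, is the conceptual crux on which the dichotomy in the conclusion rests.
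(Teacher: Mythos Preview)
Your overall architecture---Howe vectors, partial Bessel functions, the Cogdell--Shahidi--Tsai removal lemma, and an inductive cell-by-cell elimination using the $\GL_n$ twists for $n\le l$---matches the paper's approach. However, your account of \emph{where the dichotomy $\pi\cong\pi'$ or $\pi\cong\pi'^c$ comes from} is wrong, and this is the crux of the argument.

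You locate the obstruction at ``a Weyl element that lies outside the image of the Weyl group of $\SO_{2l}$ inside that of $\mathrm{O}_{2l}$.'' In the quasi-split non-split form this is not what happens: every Weyl element in the Bessel support $\mathrm{B}(\SO_{2l})$ is already fixed by the outer conjugation $c$ (in contrast to the split case). The genuine obstruction lives in the \emph{torus}. After $\mathcal{C}(l-2)$ one is reduced to $B_m(g,f)-B_m(g,f')=B_m(g,f_{\tilde w_{l-1}})$, and the $\GL_{l-1}$ twist only shows $B_m(t\tilde w_{l-1},f_{\tilde w_{l-1}})=0$ for $t$ in the \emph{split} part of $T_{\SO_{2l}}$. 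Elements $t\tilde w_{l-1}$ with $t$ involving the anisotropic $\SO_2$-factor are not $c$-fixed and are not killed at this stage. The $\GL_l$ twist then does \emph{not} yield $B_m=0$ on these; instead, the embedding $\SO_{2l}\hookrightarrow\SO_{2l+1}$ makes the map $tw\mapsto A\in\GL_l$ two-to-one, pairing $t$ with $ctc$, and what one actually proves is the symmetric identity
\[
B_m(g,f_{\tilde w_{l-1}})+B_m^c(g,f_{\tilde w_{l-1}})=0,
\]
hence $B_m(g,f)-B_m(g,f')+B_m^c(g,f)-B_m^c(g,f')=0$. The dichotomy then follows from uniqueness of Whittaker models (not ``Bessel models''): the four Whittaker models of $\pi,\pi',\pi^c,\pi'^c$ satisfy a linear relation, forcing two of them to coincide. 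Your proposal as written would have you searching for a nonexistent missing Weyl element and would not produce the $B_m+B_m^c$ identity that is the actual mechanism.
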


We note that the above theorem is equivalent to an analogous local converse theorem using the normalized twisted local $\gamma$-factor defined by Kaplan in \cite{Kap13a} (see the discussion at the end of \S\ref{zeta integrals and gamma factor}). By using the multiplicative properties for the normalized twisted local $\gamma$-factors (\cite{Kap13a}), Theorem \ref{converse thm intro} implies the following theorem by similar arguments as in \cite[\S 3.2]{JS03}. We omit the proof.

\begin{thm}[The Local Converse Theorem for quasi-split non-split $\SO_{2l}$, generic case]\label{converse thm intro generic} Let $F$ be a non-Archimedean field of characteristic $0$ and $\pi$ and $\pi^\prime$ be irreducible $\psi$-generic representations of quasi-split non-split $\SO_{2l}(F)$ with the same central character $\omega$. If $$\gamma(s, \pi\times\tau,\psi)=\gamma(s, \pi^\prime\times\tau,\psi),$$ 
for all irreducible generic representations $\tau$ of $\GL_n(F)$ with $n\leq l,$
then $\pi\cong\pi'$ or $\pi\cong \pi'^c,$
 where $c$ is the outer automorphism. 
\end{thm}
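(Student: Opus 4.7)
The plan is to reduce Theorem \ref{converse thm intro generic} to the supercuspidal case, Theorem \ref{converse thm intro}, following the template of \cite[\S 3.2]{JS03}. The two key inputs are: (a) a structural description of irreducible generic representations of quasi-split non-split $\SO_{2l}(F)$ as the unique irreducible generic constituent of a standard module induced from generic supercuspidal data, and (b) the multiplicativity of the normalized twisted local $\gamma$-factors established by Kaplan in \cite{Kap13a}.

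First, I would write $\pi$ as the unique irreducible generic subquotient of a standard module
\[
\mathrm{Ind}_{P}^{G}\bigl(\sigma_{1}|\det|^{s_{1}}\otimes\cdots\otimes\sigma_{k}|\det|^{s_{k}}\otimes\pi_{0}\bigr),
\]
where each $\sigma_{i}$ is an irreducible generic supercuspidal representation of $\GL_{n_{i}}(F)$, the $s_{i}\in\R$, and $\pi_{0}$ is an irreducible generic supercuspidal representation of a smaller quasi-split non-split $\SO_{2m}(F)$ (with the appropriate boundary convention when $m$ is small), satisfying $m+\sum n_{i}=l$. Such a decomposition is available by combining the Bernstein--Zelevinsky classification on the $\GL$ side with the theory of standard modules for generic representations of quasi-split classical groups. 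Multiplicativity of the $\gamma$-factor then yields, for every irreducible generic $\tau$ of $\GL_{n}(F)$ with $n\leq l$,
\[
\gamma(s,\pi\times\tau,\psi)=\gamma(s,\pi_{0}\times\tau,\psi)\prod_{i=1}^{k}\gamma(s+s_{i},\sigma_{i}\times\tau,\psi)\,\gamma(s-s_{i},\sigma_{i}^{\vee}\times\tau,\psi).
\]
Applying the same construction to $\pi'$ produces supercuspidal data $(\sigma_{j}',s_{j}',\pi_{0}')$ with a parallel factorization.

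Next, by varying $\tau$ through twists of a fixed irreducible generic supercuspidal of $\GL_{n}(F)$ and tracking the locations and orders of the poles of the two factorizations, one recovers the multi-set $\{(\sigma_{i},s_{i})\}$ from the $\gamma$-factors; here the $\GL$ converse theorem of \cite{Cha19,JL18a} is used to identify the individual $\sigma_{i}$ once their exponent is pinned down by the poles. After cancelling the matching $\GL$-contributions on both sides, one obtains
\[
\gamma(s,\pi_{0}\times\tau,\psi)=\gamma(s,\pi_{0}'\times\tau,\psi)
\]
for all irreducible generic $\tau$ of $\GL_{n}(F)$ with $n\leq m$. The central characters of $\pi_{0}$ and $\pi_{0}'$ also agree, which transfers from $\pi,\pi'$ via the matched inducing data, so Theorem \ref{converse thm intro} applies and gives $\pi_{0}\cong\pi_{0}'$ or $\pi_{0}\cong\pi_{0}'^{c}$. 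Reinserting the matched $\GL$-data and exploiting the compatibility of the outer automorphism with parabolic induction from the chosen Levi then yields $\pi\cong\pi'$ or $\pi\cong\pi'^{c}$.

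The main obstacle I anticipate is the final bookkeeping step involving the outer automorphism: $c$ does not act trivially on all Levi factors, so one must verify carefully that an isomorphism $\pi_{0}\cong\pi_{0}'^{c}$ at the smaller rank lifts, after re-induction with the matched $\GL$-data, to an isomorphism $\pi\cong\pi'^{c}$ at the original rank, with the outer-automorphism convention matching the one under which the $\gamma$-factors were compared in \cite{Kap13a}. A secondary subtlety is the treatment of the boundary cases $m\in\{0,1\}$ (where $\SO_{2m}$ is essentially trivial or a one-dimensional torus), for which a properly phrased base case of the induction is needed.
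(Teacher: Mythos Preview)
Your proposal is correct and follows essentially the same route the paper indicates: the paper omits the proof, stating only that Theorem~\ref{converse thm intro generic} follows from Theorem~\ref{converse thm intro} via the multiplicativity of Kaplan's normalized $\gamma$-factors \cite{Kap13a} together with the argument of \cite[\S 3.2]{JS03}. Your identification of the two subtleties (the outer-automorphism bookkeeping under parabolic induction and the boundary cases $m\in\{0,1\}$) is apt, and these are precisely the points where the adaptation of \cite{JS03} to the even-orthogonal setting requires care.
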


We remark that in the case that $F$ has characteristic $p\neq 2,$ Jo showed the analogue of Theorem \ref{converse thm intro} implies Theorem \ref{converse thm intro generic} for symplectic and odd special orthogonal groups (\cite{Jo22}) and a similar argument works in the split case of even special orthogonal groups (\cite{HL22b}). However, it seems that not all of the preliminary work required for such an argument has been carried out in the case of quasi-split special orthogonal groups yet. Hence, we require $F$ to have characteristic $0$ in Theorem \ref{converse thm intro generic}.

Theorems \ref{converse thm intro} and \ref{converse thm intro generic} suggest that local twisted gamma factors may not be able to distinguish the irreducible generic supercuspidal representations $\pi$ and $\pi^c$ of $\SO_{2l}(F).$ This is in fact true (see Corollaries \ref{conj gamma1}, \ref{conj gamma2} and \ref{conj gamma3}). This is a unique phenomenon for both split and quasi-split non-split even special orthogonal groups among all the classical groups. This is consistent with the work of Arthur on the local Langlands correspondence and local Langlands functoriality (\cite{Art13}), and the work of Jiang and Soudry on local descent for $\SO_{2l}$ (\cite{JS12}).

Now, we briefly introduce the ideas used to prove Theorem \ref{converse thm intro}. 
As in many other proven cases of classical groups (\cite{HL22b, Jo22, Zha18, Zha19}), we utilize the theory of Howe vectors and  partial Bessel functions of $\pi.$ These are particular Whittaker functions in the Whittaker model of $\pi$. Let $U_{\SO_{2l}}$ be the set of upper triangular unipotent elements in $\SO_{2l}$ and fix a generic character $\psi$ on $U_{\SO_{2l}}.$ Denote the Whittaker model of $\pi$ by $\mathcal{W}(\pi,\psi).$ For $m\in\mathbb{N}$, we define certain compact subgroups $H_m$ of $\SO_{2l}$ and characters $\psi_m$ of $H_m$ such that $U_{\SO_{2l}}=\cup_{m\geq1}(H_m\cap U_{\SO_{2l}})$ and $\psi_m(u)=\psi(u)$ for any $u\in H_m\cap U_{\SO_{2l}}$ (see \S\ref{Howe vectors}). A partial Bessel function for $\pi$ is a function $W_m\in\mathcal{W}(\pi,\psi)$ such that $W_m(I_{2l})=1$ and
$$
W_m(ugh)=\psi(u)\psi_m(h)W_m(g),
$$
for any $u\in U_{\SO_{2l}}$, $h\in H_m$, and $g\in\SO_{2l}.$

Let $\mathcal{M}(\pi)$ be the set of matrix coefficients of $\pi$. 
Let $C^\infty(\SO_{2l},\omega,\psi)$ be the space of smooth compactly supported functions $W$ satisfying $W(zg)=\omega(z)W(g)$ and $W(ug)=\psi(u)W(g)$ for any $z\in Z, u\in U_{\SO_{2l}},$ and $g\in\SO_{2l}(F).$ Since $\pi$ is generic, we have a nonzero map
$$f \in \mathcal{M}(\pi) \mapsto W^f \in C^\infty(\SO_{2l},\omega,\psi),$$ given by 
$$
W^f(g)=\int_{U_{\SO_{2l}}} \psi\inv(u)f(ug)du.
$$
By choosing
$f\in\mathcal{M}(\pi)$ such that $W^f(I_{2l})=1$ and letting $m$ be a large enough positive integer, we can associate a Howe vector to $W^f$, denoted by 
$B_m(g;f)$, $g \in \SO_{2l}(F)$, which is a partial Bessel function (see \S \ref{Howe vectors}).

Fix matrix coefficients $f\in\mathcal{M}(\pi)$ and $f'\in\mathcal{M}(\pi')$ such that $W^f(I_{2l})=W^{f'}(I_{2l})=1.$ The goal is to study the partial Bessel functions $B_m(g,f)$ and $B_m(g,f')$ under the assumption of the equality of local twisted gamma factors. First, we study the support of the partial Bessel functions and partition it based on the Bruhat cells corresponding to Weyl elements in the sets $\mathrm{B}_n(\SO_{2l})$ for $n=1,\dots,l-1$ (see Corollary \ref{Besselpartnonsplit}). The work of Cogdell, Shahidi, and Tsai (\cite{CST17}) then gives that for $1 \leq i \leq l-1$, there exist functions $f_i \in C_c^\infty(\SO_{2l},\omega)$ with support lying in the Bruhat cells corresponding to $\mathrm{B}_i(\SO_{2l}),$ such that 
$$
B_m(g,f)-B_m(g,f')=\sum_{n=1}^{l-1} B_m(g,f_n),
$$
for any $g\in \SO_{2l}$ (see Corollary \ref{C(0)}).
In Theorem \ref{l-2 theorem}, we show that for $1 \leq k \leq l-2$, equality of the local twisted gamma factors up to $\GL_k$ implies that 
$$
B_m(g,f)-B_m(g,f')=\sum_{n=k+1}^{l-1} B_m(g,f_i).
$$
Hence, equality of the local twisted gamma factors up to $\GL_{l-2}$ implies that 
$B_m(g,f)-B_m(g,f')=B_m(g,f_{l-1}).$ Up until this point, the results parallel those in \cite{Jo22, Zha18, Zha19}. Equality of the local twisted gamma factors up to $\GL_{l-1}$ only gives that $B_m(g,f)-B_m(g,f')=B_m(g,f_{l-1})=0$ if $cgc=g$ (see Theorem \ref{l-1 thm}). This is different from the cases in \cite{Jo22, Zha18, Zha19}, but is expected as it parallels the split case of $\SO_{2l}$ in \cite{HL22b}. Hence, it remains to determine the set elements $g$ such that $cgc\neq g.$ In the split case of $\SO_{2l},$ equality of the local twisted gamma factors up to $\GL_l$ implied that $B_m(g,f)-B_m(g,f')+B_m^c(g,f)-B_m^c(g,f')=0$ where $B_m^c(g,f)$ and $B_m^c(g,f')$ is are partial Bessel functions of $\pi^c$ and $\pi'^c,$ respectively (\cite[Theorem 7.9]{HL22b}). We obtain an analogous result for quasi-split non-split $\SO_{2l}.$

\begin{thm}[Theorem \ref{l thm}]\label{l thm intro}
Let $\pi$ and $\pi^\prime$ be irreducible supercuspidal $\psi$-generic representations of quasi-split non-split $\SO_{2l}(F)$ with the same central character. Fix matrix coefficients $f\in\mathcal{M}(\pi)$ and $f'\in\mathcal{M}(\pi')$ such that $W^f(I_{2l})=W^{f'}(I_{2l})=1.$ If $$\gamma(s, \pi\times\tau,\psi)=\gamma(s, \pi^\prime\times\tau,\psi),$$ as functions of the complex variable $s$,
for all irreducible generic representations $\tau$ of $\GL_n$ with $1 \leq n\leq l,$
then we have that
$$B_m(g,f)-B_m(g,f')+B_m^c(g,f)-B_m^c(g,f')=0,$$
for any $g\in\SO_{2l}( F)$.
\end{thm}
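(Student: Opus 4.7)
The plan is to use the Rankin--Selberg zeta integrals for $\SO_{2l}\times\GL_l$ and to exploit the equality of the twisted local gamma factors at $n=l$. By Theorem \ref{l-1 thm}, the equality of gamma factors up to $\GL_{l-1}$ already forces
$$B_m(g,f)-B_m(g,f')=B_m(g,f_{l-1}),$$
with $B_m(g,f_{l-1})=0$ whenever $cgc=g$. The remaining task is therefore to analyze the contribution of the elements $g$ with $cgc\neq g$, and to show that summing with the analogous identity involving the $c$-conjugate partial Bessel functions yields total cancellation.

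First, I would write down the local zeta integral $\Psi(s,B_m(\cdot,f),W_\tau,\phi)$ for an irreducible generic representation $\tau$ of $\GL_l(F)$, where $W_\tau\in\mathcal{W}(\tau,\psi)$ and $\phi$ is an appropriate Schwartz function, using the setup of \S\ref{zeta integrals and gamma factor}. The local functional equation couples this integral to its intertwined form via $\gamma(s,\pi\times\tau,\psi)$; the hypothesis that the gamma factors for $\pi$ and $\pi'$ agree then implies that $\Psi$ evaluated on the difference $B_m(\cdot,f)-B_m(\cdot,f')=B_m(\cdot,f_{l-1})$ satisfies a purely linear identity in which the gamma factor has dropped out.

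Next, I would unfold this zeta integral via the Bruhat decomposition restricted to the support of $B_m(\cdot,f_{l-1})$, tracking contributions cell by cell for Weyl elements in $\mathrm{B}_{l-1}(\SO_{2l})$. Unlike in $\SO_{2l+1}$ or $\mathrm{Sp}_{2l}$, the top Bessel stratum in quasi-split non-split $\SO_{2l}$ is not a single cell but splits into a pair of cells interchanged by the outer automorphism $c$. Applying the functional equation and identifying the $c$-image of each cell with the corresponding zeta integral for $\pi^c$ against the same $\tau$ produces precisely the additional terms $B_m^c(g,f)-B_m^c(g,f')$. Choosing $W_\tau$ and $\phi$ adapted to the Howe vector level $m$, and invoking the techniques of Cogdell--Shahidi--Tsai (\cite{CST17}) to kill lower-cell contributions via the inductive step of Theorem \ref{l-2 theorem}, the combined identity should collapse to
$$B_m(g,f)-B_m(g,f')+B_m^c(g,f)-B_m^c(g,f')=0,$$
mirroring \cite[Theorem 7.9]{HL22b}.

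The main obstacle is making precise the action of the outer automorphism through the zeta integral: the standard embedding of $\GL_l$ into $\SO_{2l}$ used in the Rankin--Selberg construction is not preserved by $c$, so one must carefully identify how the zeta integral for $\pi^c$ against $\tau$ matches the $c$-twisted piece of the unfolding. A secondary technical difficulty lies in selecting test data fine enough to isolate the top Bruhat cell while simultaneously being compatible with Howe vectors of level $m$; here the Weyl elements $w_{l-1}$ and $cw_{l-1}c$ and the asymmetry of their Bessel functions across cells will require the most delicate book-keeping.
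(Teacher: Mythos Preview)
Your overall strategy---use the $\SO_{2l}\times\GL_l$ zeta integral, apply the functional equation to the difference $B_m(\cdot,f)-B_m(\cdot,f')=B_m(\cdot,f_{\tilde{w}_{l-1}})$, and exploit a two-to-one phenomenon under $c$---is right in spirit, and matches the paper's route. But you have misidentified where the two-to-one map actually lives, and this is a genuine gap.

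You write that the top Bessel stratum ``splits into a pair of cells interchanged by the outer automorphism $c$'' and refer to the Weyl elements $w_{l-1}$ and $cw_{l-1}c$. This is the mechanism in the \emph{split} case treated in \cite{HL22b}. In the quasi-split non-split case the situation is the opposite: every Weyl element in $\mathrm{B}_{l-1}(\SO_{2l})$ is fixed by conjugation by $c$ (the paper notes this explicitly in the introduction). So there is no pair of cells being swapped, and your proposed unfolding ``cell by cell'' with $c$-paired cells simply does not occur here.

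The two-to-one phenomenon instead arises from the non-split part of the torus. The embedding $j:\SO_{2l}\hookrightarrow\SO_{2l+1}$ sends $tw$ (for $w\in\mathrm{B}_{l-1}(\SO_{2l})$ and $t\in T_{\SO_{2l}}$ with anisotropic coordinate $a\neq 1$) into $Q_l w_l V_l$, and the resulting matrix $A\in\GL_l$ depends only on $a$, not on $b$, in the middle block $\left(\begin{smallmatrix}a & b\rho\\ b & a\end{smallmatrix}\right)$ (Proposition \ref{l embed}). Hence $tw$ and $(ctc)w$---same Weyl element, conjugate torus elements---map to the same $A$. The paper partitions the torus as $T_l=T_1\sqcup T_2$ with $c$ interchanging the pieces, rewrites the $T_2$-contribution as $B_m^c$ over $T_1$, checks that the section $\tilde{\xi}_{1-s}^{i,v}$ takes the same value on the two preimages, and then applies Proposition \ref{JS Prop} to the resulting $\GL_l$-integral. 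A further technical step you did not anticipate is the need to average over a carefully chosen compact set $K_m'\subseteq K_m^{\SO_{2l}}$ (Lemma \ref{lem compact gen}) so that the function fed into Proposition \ref{JS Prop} is genuinely smooth on $\GL_l$; the paper flags this as a significant issue specific to even orthogonal groups. Finally, a minor point: the zeta integrals here use sections $f_s\in I(\tau,s,\psi^{-1})$ of an induced representation on $\SO_{2l+1}$, not a Schwartz function $\phi$.
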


By the uniqueness of Whittaker models, it follows  that $\pi\cong\pi'$ or $\pi\cong \pi'^c$ and hence Theorem \ref{converse thm intro} is proved. The proof of Theorem \ref{converse thm intro} is outlined in more detail in \S \ref{outline of the proof} and the full details are provided in \S \ref{proof of the local converse theorem}.

An analogous version of Theorem \ref{l thm intro} is proven in the case of split $\SO_{2l}$ in \cite[Theorem 1.4]{HL22b}. We discuss the differences in the proofs. In the split case, the theory of Howe vectors and partial Bessel functions was already developed in \cite{Bar95}. Unfortunately, the quasi-split case was not considered and so we introduce it in \S\ref{Partial Bessel functions}.
The fact that the maximal torus is not split also requires new ideas to overcome. For example, many arguments in the split case utilize the isomorphism between $F$ and the simple root subgroups of $U_{\SO_{2l}}.$ In the quasi-split non-split case, we must be more careful. Instead we utilize certain elements of $U_{\SO_{2l}}$ and carefully apply the properties of partial Bessel functions to obtain results that are similar to the split case. There is one particular difference which makes the proof of Theorem 1.4 simpler than the split case. Namely, all of the Weyl elements in the Bessel support for quasi-split non-split $\SO_{2l}$ are self-conjugate (under the outer conjugation). In the split case, there are Weyl elements which are not self-conjugate and this significantly increases the amount of detail needed to prove \cite[Theorem 1.4]{HL22b}.

As a direct application of Theorem \ref{converse thm intro generic}, following the same argument as in \cite[Theorem 5.3]{JS03}, we also obtain the following weak rigidity theorem for irreducible generic cuspidal automorphic representations of quasi-split non-split $\SO_{2l}(\mathbb{A})$, where $\mathbb{A}$ is the ring of adeles for a number field (see also \cite[Theorem 1.5]{HL22b} for the split case). The proof is omitted.

\begin{thm}\label{rigidity}
Let $\Pi =\otimes_v \Pi_v$ and $\Pi'=\otimes_v \Pi'_v$ be two irreducible generic cuspidal automorphic representations of the quasi-split non-split group $\SO_{2l}(\mathbb{A})$. If $\Pi_v \cong \Pi'_v$ for almost all places $v$, then $\Pi_v \cong \Pi'_v$ or $\Pi_v \cong \Pi'^c_v$ for every place $v.$ 
\end{thm}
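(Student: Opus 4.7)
The plan is to deduce the theorem from Theorem~\ref{converse thm intro generic} applied at each of the finitely many places where $\Pi_v\not\cong\Pi_v'$ may \emph{a priori} fail, following the template of~\cite[Theorem~5.3]{JS03}. Let $S$ be this finite set of bad places, and fix $v_0\in S$. Since $\Pi$ and $\Pi'$ are globally isomorphic at almost every place, they share a global central character; in particular so do $\Pi_{v_0}$ and $\Pi_{v_0}'$. It therefore suffices to establish
\[
\gamma(s,\Pi_{v_0}\times\tau_0,\psi_{v_0}) = \gamma(s,\Pi_{v_0}'\times\tau_0,\psi_{v_0})
\]
for every irreducible generic representation $\tau_0$ of $\GL_n(F_{v_0})$ with $1\leq n\leq l$, after which Theorem~\ref{converse thm intro generic} yields $\Pi_{v_0}\cong \Pi_{v_0}'$ or $\Pi_{v_0}\cong \Pi'^c_{v_0}$.

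To produce the local identity of $\gamma$-factors at $v_0$, I would first invoke the global Rankin--Selberg functional equation: for any irreducible generic cuspidal automorphic representation $\tau$ of $\GL_n(\mathbb{A})$,
\[
\prod_v \gamma(s,\Pi_v\times\tau_v,\psi_v) = 1 = \prod_v \gamma(s,\Pi_v'\times\tau_v,\psi_v).
\]
Cancelling the local factors at $v\notin S$, which agree by hypothesis, leaves
\[
\prod_{v\in S}\gamma(s,\Pi_v\times\tau_v,\psi_v) = \prod_{v\in S}\gamma(s,\Pi_v'\times\tau_v,\psi_v).
\]
Given an arbitrary local representation $\tau_0$ at $v_0$, I would then use a standard globalization (a Poincar\'e-series or simple trace-formula construction of a generic cuspidal automorphic representation with prescribed local components) to realize $\tau$ with $\tau_{v_0}\cong \tau_0$ and with $\tau_v$ sufficiently highly ramified at each $v\in S\setminus\{v_0\}$.

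The decisive ingredient is then stability of the twisted local $\gamma$-factor for quasi-split non-split $\SO_{2l}$: once $\tau_v$ is ramified deeply enough, $\gamma(s,\Pi_v\times\tau_v,\psi_v)$ depends only on $\tau_v$, $\psi_v$, and the central character of $\Pi_v$. Because $\Pi_v$ and $\Pi_v'$ share a central character at every place, the factors at $v\in S\setminus\{v_0\}$ cancel and the displayed product identity collapses to the desired equality at $v_0$. Theorem~\ref{converse thm intro generic} then produces the local conclusion at $v_0$, and since $v_0\in S$ was arbitrary (the conclusion at $v\notin S$ being automatic) the theorem follows. The main obstacle is the input of stability of $\gamma$-factors for the non-split group: for the Langlands--Shahidi factor this is Shahidi's stability theorem, and compatibility with the Rankin--Selberg normalization used in Theorem~\ref{converse thm intro generic} is furnished by Kaplan's normalization discussed at the end of \S\ref{zeta integrals and gamma factor}. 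Once this is granted, the argument runs parallel to~\cite[Theorem~5.3]{JS03}, which is why the author defers it.
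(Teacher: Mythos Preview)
Your proposal is correct and follows precisely the argument the paper defers to, namely the template of \cite[Theorem~5.3]{JS03}: compare global functional equations, cancel at the good places, globalize a given local twist with highly ramified components at the remaining bad places, and invoke stability of the twisted $\gamma$-factors (together with equality of central characters) to isolate the factor at $v_0$, then apply Theorem~\ref{converse thm intro generic}. The paper omits the proof entirely, so there is nothing further to compare.
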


Theorem \ref{converse thm intro generic} and hence Theorem \ref{rigidity} were obtained independently by Haan-Kim-Kwon (\cite{HKK23}), using the different method of theta correspondence. Also we remark that Theorem \ref{converse thm intro generic} can also be obtained by applying the theory of local Langlands functoriality and the work of Arthur (\cite{Art13}).
 We note that our method is independent of the work of Arthur (\cite{Art13}). The advantage of our method is that it reflects the intrinsic properties of the partial Bessel functions of special even orthogonal groups.

As mentioned earlier, direct proofs of the local converse theorems for split and quasi-split non-split even special orthogonal groups are desirable. The arguments give more detailed information about the partial Bessel functions gained from equality of the twisted local $\gamma$-factors for specific twists. In particular, it is an open problem to determine a set of invariants which determine the representations of $\SO_{2l}$ uniquely (not just up to outer conjugation). Recently, in the case of split $\SO_4$, Yan and Zhang used the theory of Howe vectors and the results of \cite{HKK23, HL22b} along with work on a twisted exterior square local $\gamma$-factor to determine the representations uniquely (\cite[Theorem 1.1]{YZ23}). However, twisted exterior square local $\gamma$-factors for $\SO_6$ are not enough to distinguish $\pi$ and $\pi^c$ (see the discussion at the end of \cite[\S1]{YZ23} or \cite[Proposition 7.3]{Mat24}).

We remark that there has been extensive parallel work on converse theorems for finite groups. Over finite fields, the converse theorem for generic cuspidal representations has an analogous form to the local case; however, the arguments tend to be simpler since one can work with Bessel functions as opposed to partial Bessel functions. Let $\mathbb{F}_q$ be the finite field of $q$ elements, where $q$ is some power of an odd prime. For $\GL_l(\mathbb{F}_q)$, Nien proved the converse theorem (\cite{Nie14}) by using normalized Bessel functions and the twisted gamma factors defined by Roditty (\cite{Rod10}). In \cite{LZ22a}, Liu and Zhang defined the twisted gamma factors for $\mathrm{Sp}_{2l}(\mathbb{F}_q)$, $\SO_{2l+1}(\mathbb{F}_q)$, $\mathrm{U}_{2l}(\mathbb{F}_q)$, and $\mathrm{U}_{2l+1}(\mathbb{F}_q)$ and proved the corresponding converse theorems. Liu and Zhang also established the converse theorem for $\mathrm{G}_2(\mathbb{F}_q)$ (\cite{LZ22b}). Liu and the author proved the converse theorem for split $\SO_{2l}(\mathbb{F}_q)$ in \cite{HL22a}. The case of quasi-split non-split $\SO_{2l}(\mathbb{F}_q)$ was recently proven by the author in \cite{Haz23a} and served as a guide for the proof of the local converse theorem in this paper. We remark that in addition to the above differences, the local case of even special orthogonal groups (both split or quasi-split) have an additional technical difference when compared to their finite field counterparts or their analogues for other classical groups over local fields. Specifically, a significant amount of work is needed to guarantee the smoothness of a certain function (see the proof of Theorem \ref{l thm} or \cite[Theorem 6.13]{HL22b}).

Here is the structure of this paper. In \S \ref{gps and reps}, we introduce the groups and representations considered in this paper. 
In \S \ref{zeta integrals and gamma factor}, we recall the definitions of the zeta integrals and twisted $\gamma$-factors. In \S \ref{Partial Bessel functions}, we introduce the theory of Howe vectors and partial Bessel functions and outline the proof of Theorem \ref{converse thm intro}. In
\S \ref{sections}, we construct a section which is used in calculating the zeta integrals. In \S \ref{proof of the local converse theorem}, we study the $\GL_n$ twists, $1 \leq n \leq l$, and show the relation between the $\GL_n$ twists and the partial Bessel functions (Theorems \ref{l-2 theorem}, \ref{l-1 thm}, \ref{l thm}). Then, we prove Theorem \ref{l thm intro} and deduce Theorem \ref{converse thm intro}. 

\subsection*{Acknowledgements}

The author would like to thank Professor Dihua Jiang, Professor Freydoon Shahidi, and Professor Baiying Liu for their interest and constant support.
The author would also like to thank Professor Qing Zhang for helpful communications and comments. 

\section{The groups and representations}\label{gps and reps}

Let $n, l \in \N$ and $F$ be a non-Archimedean local field of characteristic $0$. We fix a nontrivial unramified additive character $\psi$ of $F$ and let $\mathfrak{o}$  be the integers of $F$ with maximal ideal $\mathfrak{p}$ and uniformizer $\varpi.$

Let $\GL_n$ be the group of matrices with entries in $F$ and non-zero determinant. Let $I_n$ be the identity element and fix

$$J_n=\left(\begin{matrix}
0 & 0 & \cdots & 0 & 1 \\
0 & 0 & \cdots & 1 & 0 \\
\vdots & \vdots & \iddots & \vdots & \vdots \\
0 & 1 & \cdots & 0 & 0 \\
1 & 0 & \cdots & 0 & 0 \\
\end{matrix}\right).$$ For $\rho\in F^\times$, we define
$$J_{2l,\rho}=\mathrm{diag}(I_{l-1}, \left(\begin{matrix}
0 & 1 \\
-\rho & 0
\end{matrix}\right), I_{l-1}) \cdot J_{2l}.$$ We set $\SO_{2n+1}=\{g\in\GL_{2n+1} \, | \, \mathrm{det}(g)=1, {}^tgJ_{2n+1} g = J_{2n+1}\}$ and $\SO_{2l}(J_{2l,\rho})=\{g\in\GL_{2l} \, | \, \mathrm{det}(g)=1, {}^tgJ_{2l,\rho} g = J_{2l,\rho}\}$. We often simply write this as $\SO_{2l}.$ The group $\SO_{2l}$ is split if $\rho\in F^2$ and is quasi-split, but not split, otherwise (in this case $\SO_{2l}$ splits over the quadratic extension $F(\sqrt{\rho})$). Unless otherwise explicitly stated, hereinafter, we fix $\SO_{2l}$ to be the non-split group $\SO_{2l}(J_{2l,\rho})$ for a fixed $\rho\notin F^2.$ 
Let $U_{\GL_{n}}$ and $U_{\SO_{2l}}$ be the subgroups of upper triangular matrices in $\GL_n$ and $\SO_{2l}$, respectively. Fix $B_{\SO_{2l}}=T_{\SO_{2l}}U_{\SO_{2l}}$ to be the standard Borel subgroup of $\SO_{2l}$ where $T_{\SO_{2l}}$ consists of all elements of the form
$$
t=\mathrm{diag}(t_1,\dots,t_{l-1},\left(\begin{matrix}
a & b\rho \\
b & a
\end{matrix}\right),t_{l-1}\inv,\dots,t_1\inv),
$$
where $t_1,\dots,t_{l-1}\in F^\times$ and $a,b\in F$ with  $a^2-b^2\rho=1.$ $T_{\SO_{2l}}$ is a maximal torus in $\SO_{2l}.$

Set $$
c=\mathrm{diag}(I_{l-1},
1,-1, I_{l-1}).$$ Note that $c\notin \SO_{2l};$ however, $c \SO_{2l} c\inv=c\SO_{2l}c= \SO_{2l}$. Given a representation $\pi$ of $\SO_{2l},$ we define a new representation $\pi^c$ of $\SO_{2l}$ by $\pi^c(g)=\pi(cgc).$

We now discuss the embeddings needed to define the zeta integrals later (see \cite{Kap13a, Kap15} for discussions on these embeddings). If $n<l$ we embed $\SO_{2n+1}$ into $\SO_{2l}$ via
$$
\left(
\begin{matrix}
A & B & C \\
D & E & K \\
L & P & Q 
\end{matrix} 
\right)
\mapsto
\mathrm{diag}(I_{l-n-1},
M^{-1}\left(\begin{matrix}
A & & B & C \\
 & 1 & & \\
D & & E & K \\
L & & P & Q 
\end{matrix}\right)
M, I_{l-n-1}),
$$
where $A$ and $Q$ are $n\times n$ matrices and 
$$
M =\mathrm{diag}(I_n,\left(\begin{matrix}
    0 & 2 \\
    1 & 0
    \end{matrix}\right), I_n). 
$$
This embeds $\SO_{2n+1}$ into the standard Levi subgroup $\GL_{l-n-1}\times\SO_{2n+2}$ of $\SO_{2l}$.

Set $\gamma=\frac{\rho}{2}.$ If $l=n$, we embed $\SO_{2l}$ into $\SO_{2l+1}$ via
$$
\left(\begin{matrix}
A & B \\
C & D
\end{matrix}\right)
\mapsto
M^{-1}
\left(\begin{matrix}
A & & B \\
 & 1 & \\
C & & D  
\end{matrix}\right)
M,
$$
where $A, B, C,$ and $D$ are $l\times l$ matrices and
$$
M=\mathrm{diag}(I_{l-1},\left(\begin{matrix}
    0 & 1 & 0 \\
    \frac{1}{2} & 0 & \frac{1}{2\gamma} \\
    \frac{1}{2} & 0 & \frac{-1}{2\gamma} 
    \end{matrix}\right), I_{l-1}).
$$
Note that the embedding takes the torus $T_{\SO_{2l}}$ to a torus in $\SO_{2l+1}$, but not the standard torus. Indeed, the embedding takes $t$ (as above) to
$$
\mathrm{diag}(s,\left(\begin{matrix}
\frac{1}{2}(1+a) & b & \frac{1}{2\gamma}(1-a) \\
\gamma b & a & -b \\
\frac{\gamma}{2}(1-a) & -\gamma b & \frac{1}{2}(1+a)
\end{matrix}\right), s^*)
$$
where $s=\mathrm{diag}(t_1, t_2, \dots, t_{l-1})$.

Next we discuss the representations. Recall that $U_{\GL_n}$ and $U_{\SO_{2l}}$ are the subgroups of upper triangular matrices in $\GL_n$ and $\SO_{2l}$ respectively and that we fixed an additive nontrivial character $\psi$ of $F$. We abuse notation and define a generic character, which we will also call $\psi$, on $U_{\GL_n}$ and $U_{\SO_{2l}}$. For $u=(u_{i,j})_{i,j=1}^n\in U_{\GL_n}$, we set $\psi(u)=\psi\left(\sum_{i=1}^{l-1} u_{i,i+1}\right).$ For $u=(u_{i,j})_{i,j=1}^{2l}\in U_{\SO_{2l}},$ we set 
$$
\psi(u)= \psi\left(\sum_{i=1}^{l-2} u_{i,i+1}+\frac{1}{2}u_{l-1,l+1}  \right).
$$

We say an irreducible representation $\pi$ of $\SO_{2l}$ is $\psi$-generic if $$\mathrm{Hom}_{U_{\SO_{2l}}}(\pi,\psi)\neq 0.$$
Similarly, we say an irreducible representation $\tau$ of $\GL_{n}$ is $\psi$-generic if $$\mathrm{Hom}_{U_{\GL_{n}}}(\tau,\psi)\neq 0.$$
A nonzero intertwining operator lying in these spaces is called a Whittaker functional and it is well known that Whittaker functionals are unique up to scalars. Fix $\Gamma\in\mathrm{Hom}_{U_{\SO_{2l}}}(\pi,\psi)$ to be a nonzero Whittaker functional. For $v\in\pi$, let $W_v(g)=\Gamma(\pi(g)v)$ for any $g\in\SO_{2l}$ and set $\mathcal{W}(\pi,\psi)=\{W_v \, | \, v\in\pi\}.$ $\mathcal{W}(\pi,\psi)$ is called the $\psi$-Whittaker model of $\pi.$ By Frobenius reciprocity, $\mathrm{Hom}_{U_{\SO_{2l}}}(\pi,\psi)\cong\mathrm{Hom}_{SO_{2l}}(\pi,\mathrm{Ind}_{U_{\SO_{2l}}}^{\SO_{2l}}(\psi)).$ Thus, $\pi$ can be realized as a subrepresentation of $\mathrm{Ind}_{U_{\SO_{2l}}}^{\SO_{2l}}(\psi)$ via the map $\pi \rightarrow \mathcal{W}(\pi,\psi)$ given by $v\mapsto W_v.$ Moreover, by the uniqueness of Whittaker models, this subrepresentation occurs with multiplicity one inside $\mathrm{Ind}_{U_{\SO_{2l}}}^{\SO_{2l}}(\psi)$. We also note that the analogous results hold for $\psi$-generic representations $\tau$ of $\GL_n.$

Let  $Q_n=L_n V_n$ be the standard Siegel parabolic subgroup of $\SO_{2n+1}$ with Levi subgroup $L_n\cong \GL_n.$ For $a\in\GL_n$ we let $l_n(a)=\mathrm{diag}(a,1,a^*)\in L_n$ where $a^*=J_n{}^ta^{-1}J_n.$ Let $\tau$ be an irreducible $\psi^{-1}$-generic representation of $\GL_{n}$, $s\in\C$, and set $I(\tau, s)=\mathrm{Ind}_{Q_n}^{\SO_{2n+1}}(\tau |\mathrm{det}|^{s-\frac{1}{2}}).$ An element $\xi_s\in I(\tau,s)$ is a function $\xi_s:\SO_{2n+1}\rightarrow\tau$ satisfying $$
\xi_s(l_n(a)ug)=|\mathrm{det}(a)|^{s-\frac{1}{2}}\tau(a)(\xi_s(g)), \forall a\in\GL_n, u\in V_n, g\in\SO_{2n+1},
$$ and is right translation invariant by some compact open subgroup.
Fix a nonzero homomorphism $\Lambda_\tau \in \mathrm{Hom}_{U_{\GL_{n}}}(\tau,\psi^{-1})$. For $\xi_s\in I (\tau,s)$, we let $f_{\xi_s} : \SO_{2n+1}\times\GL_n \rightarrow \C$ be the function given by $$
f_{\xi_s}(g,a)=\Lambda_\tau(\tau(a)\xi_s(g)).
$$
Let $I(\tau,s,\psi^{-1})$ be the space of functions generated by $f_{\xi_s}, \xi_s\in I(\tau,s).$ Note that for $f_s\in I(\tau,s,\psi^{-1}),$ we have 
$$
f_s(g,ua)=\psi^{-1}(u)f_s(g,a), \forall g\in\SO_{2n+1}, u\in U_{\GL_n}, a\in\GL_n.
$$
We  also let $\tau^*$ be the contragredient representation of $\GL_n$ defined by $\tau^*(a)=\tau(a^*).$

\section{Zeta integrals and the gamma factor}\label{zeta integrals and gamma factor}

In this section, we recall the definition of the zeta integrals in \cite{Kap13a}. In the case $n=l$, we already have the notation needed; however, we need to define another unipotent subgroup and character when $n<l.$ 

Suppose that $n<l.$ Let $P_{l-n-1}=M_{l-n-1}N_{l-n-1}$ be the standard parabolic subgroup of $\SO_{2l}$ with Levi subgroup, $M_{l-n-1}$, isomorphic to $\GL_{l-n-1}\times\SO_{2n+2}.$ We embed $U_{\GL_{l-n-1}}$ inside $\GL_{l-n-1}$ which is realized in $M_{l-n-1}.$ Then we define the unipotent subgroup $N^{l-n}=U_{\GL_{l-n-1}}N_{l-n-1}.$ That is,

$$
N^{l-n}=\left\{\left(\begin{matrix}
u_1 & v_1 & v_2 \\
 & I_{\SO_{2n+2}} & v_1^\prime \\
 & & u_1^*
\end{matrix}\right) \in \SO_{2l} \, | \, u_1\in U_{\GL_{l-n-1}} \right\}.
$$
For $v=(v_{i,j})\in N^{l-n}$ we define a character, which we also call $\psi,$ of $N^{l-n}$ by

$$
\psi(v)= 
   \psi\left(\sum_{i=1}^{l-n-1} v_{i,i+1}+\frac{1}{4}v_{l-n-1,l}-\frac{1}{2} v_{l-n-1,l+1}   \right) .
$$
Note that this character is trivial when $n=l-1.$ Let $H=\SO_{2n+1}N^{l-n}$ where $\SO_{2n+1}$ is realized via the embedding into $\SO_{2n+2}$ inside $M_{l-n-1}$ and extend $\psi$ trivially across $\SO_{2n+1}$ so that $\psi$ is a character of $H$.

Let $\pi$ be an irreducible $\psi$-generic supercuspidal representation of $\SO_{2l}$, $\tau$ be a $\psi\inv$-generic representation of $\GL_n,$  $W\in\mathcal{W}(\pi,\psi),$ and $f_s\in I(\tau,s,\psi^{-1})$. We define the zeta integrals, $\Psi(W,f_s).$ Note that \cite{Kap13a} defines integrals for any $n$ and $l$; however, we  only need the case of $n\leq l$ for the converse theorem and so we do not consider the case of $n>l$.

Now suppose that $n=l.$ Then we define
$$
\Psi(W,f_s)=\int_{ U_{\SO_{2l}}\backslash\SO_{2l}} W(g)f_s(w_{l,l}g, I_{l})dg,
$$
where 
$$
w_{l,l}=\left(\begin{matrix}
 \gamma I_{l} & &  \\
  & 1 & \\
  & & \gamma\inv I_l
\end{matrix}
\right)\in\SO_{2l+1}.$$
For any $g\in\SO_{2l},$ the integral satisfies \begin{equation}\label{zetan=l}
    \Psi(g\cdot W,g\cdot f_s)=\Psi(W,f_s).
\end{equation}

Next suppose that $n<l.$ Then we define

$$
\Psi(W,f_s)=\int_{ U_{\SO_{2n+1}}\backslash\SO_{2n+1}} \left(\int_{r\in R^{l,n}} W(r w^{l,n} g (w^{l,n})\inv)dr\right)   f_s(g, I_{l})dg,
$$
where
$$
w^{l,n}=\left(\begin{matrix}
 & I_n & & & \\
 I_{l-n-1} & & & & \\
 & & I_2 & & \\
 & & & & I_{l-n-1} \\
 & & & I_n &
\end{matrix}
\right)\in\SO_{2l},$$
and
$$
R^{l,n}=\left\{\left(\begin{matrix}
I_n & & & & \\
x & I_{l-n-1} & & & \\
& & I_2 & & \\
& & & I_{l-n-1} & \\
& & & x^\prime & I_n
\end{matrix}\right)\in\SO_{2l}\right\}.
$$
The integral satisfies the property \begin{equation}\label{zetan<l}
    \Psi((gn)\cdot W,g\cdot f_s)=\psi\inv(n)\Psi(W,f_s),
\end{equation} for any $g\in\SO_{2n+1}$ and $n\in N^{l-n}.$ Note that in the case $n<l$, our integral differs from \cite{Kap13a} slightly. The difference is a right translation of the Whittaker functional by $(w^{l,n})\inv.$ This is not a significant issue since right translation preserves $\mathcal{W}(\pi,\psi);$ however, the translation simplifies the calculations of the integrals later. Moreover, this does not affect our definition of the $\gamma$-factor below.

We introduce the standard intertwining operator $M(\tau,s,\psi^{-1})$. Note that we do not use the normalized version of the intertwining operator. We define $M(\tau,s,\psi^{-1}): I(\tau,s,\psi^{-1})\rightarrow I(\tau^*,1-s,\psi^{-1})$ via
$$
M(\tau,s,\psi^{-1})f_s(h,a)=\int_{V_n} f_s(w_n u h, d_n a^*)du,
$$
where $w_n=\left(\begin{matrix}
 & & I_n \\
 & (-1)^n & \\
 I_n & &
\end{matrix}\right)$, $d_n=\mathrm{diag}(-1,1,-1,\dots,(-1)^n)\in\GL_n$, and $V_n$ is the unipotent radical of the parabolic subgroup $Q_n=L_n V_n$ in $\SO_{2n+1}$ where $L_n\cong \GL_n$, and $a^*=J_n {}^t a^{-1} J_n.$

Outside of a discrete subset of $s$, the set of bilinear forms satisfying equations (\ref{zetan=l}) or (\ref{zetan<l}) is at most $1$ dimensional. This follows from the results of \cite{AGRS10} and \cite{MW12} (see \cite{GGP12a}). When $n=l$, this was already known in \cite{GPSR87}.

Therefore, we can define a proportionality factor, $\gamma(s, \pi\times\tau, \psi)$, called the $\gamma$-factor, such that
$$
\gamma(s,\pi\times\tau, \psi)\Psi(W,f_s)=\Psi(W,M(\tau,s,\psi^{-1})f_s).
$$
We refer to these integrals and $\gamma$-factors as the twists by $\GL_n.$

Note that our $\gamma$-factor differs from that of \cite{Kap13a} slightly. Specifically, it differs by a factor depending only on $\tau$ and $\psi$. If we let $\gamma'(s,\pi\times\tau, \psi)$ be the gamma factor of \cite{Kap13a}, then 
$$
\gamma(\tau, Sym^2,\psi,2s-1)\gamma(s,\pi\times\tau, \psi)=c(n,\tau,\psi,\gamma)\gamma'(s,\pi\times\tau, \psi),
$$
where for $n<l$ we have $c(n,\tau,\psi,\gamma)=\omega_\tau(\gamma)^2 |\gamma|^{2n(s-\frac{1}{2})}$ and $c(l,\tau,\psi,\gamma)=1.$ Here 
$\gamma(\tau, Sym^2,\psi,2s-1)$ is Shahidi's local coefficient, $ Sym^2$ is the symmetric square representation, and $\omega_\tau$ is the central character of $\tau.$ Thus, if the local converse theorem holds for $\gamma(s,\pi\times\tau, \psi)$, then it also  holds for $\gamma'(s,\pi\times\tau, \psi).$ There is also the normalization of $\gamma'(s,\pi\times\tau, \psi).$ It is defined by Kaplan in \cite[\S3.2]{Kap13a} by 
$$
\Gamma(s,\pi\times\tau, \psi)=\omega_\pi(-1)^n\omega_\tau(-1)^l\gamma'(s,\pi\times\tau, \psi).
$$ It follows that local converse theorems for $\gamma(s,\pi\times\tau, \psi)$, $\gamma'(s,\pi\times\tau, \psi),$ and $\Gamma(s,\pi\times\tau, \psi)$ are all equivalent.

\section{Partial Bessel functions}\label{Partial Bessel functions}
In this section, we introduce Howe vectors and partial Bessel functions. These played a crucial role in the proof of the local converse theorems considered in \cite{HL22b, Jo22, Zha18, Zha19}. Their properties again play an important role in the manipulations of the zeta integrals in this paper. Much of the setup for Howe vectors and partial Bessel functions is carried out in the split case in \cite{Bar95}. Unfortunately, the quasi-split case was not considered. Nevertheless, we generalize the relevant ideas to our case.
 
\subsection{Howe vectors}\label{Howe vectors}
Let $\omega$ be a character of the center, $Z$, of $\SO_{2l}$ and $C_c^\infty(\SO_{2l},\omega)$ be the space of smooth compactly supported functions $f$ satisfying $f(zg)=\omega(z)f(g)$ for any $z\in Z$ and $g\in\SO_{2l}.$ Recall that we fixed $\psi$ to be a nontrivial unramified additive character of $F$ and used it to define a character, also denoted by $\psi$, on $U_{\SO_{2l}}.$ Let $C^\infty(\SO_{2l},\omega,\psi)$ be the space of functions $W$ satisfying $W(zg)=\omega(z)W(g)$ and $W(ug)=\psi(u)W(g)$ for any $z\in Z,$ $u\in U_{\SO_{2l}}$, and $g\in\SO_{2l},$ and furthermore that there exists an open compact subgroup $K$ of $\SO_{2l}$ for which $W$ is invariant under right translation.

Let $m>0$ be an integer and define congruence subgroups 
\begin{align*}
    K_m^{\GL_{2l}}&=I_{2l}+\mathrm{Mat}_{2l\times 2l}(\mathfrak{p}^m), \\
    K_m&=K_m^{\GL_{2l}}\cap \SO_{2l}.
\end{align*}
We define a character $\tau_m$ on $K_m$ by setting 
$$
\tau_m(k)=
   \psi\left(\varpi^{-2m} \left(\sum_{i=1}^{l-2} k_{i,i+1}+\frac{1}{2} k_{l-1,l+1}  \right)\right),
$$
where $k=(k_{i,j})_{i,j=1}^{2l}\in K_m.$
Set
$$
e_m=\mathrm{diag}(\varpi^{-2m(l-1)},\dots,\varpi^{-2m},1,1,\varpi^{2m},\dots,\varpi^{2m(l-1)}).
$$
Let $H_m=e_m K_m e_m\inv$ and define a character $\psi_m$ on $H_m$ via $\psi_m(h)=\tau_m(e_m\inv h e_m)$ for $h\in H_m.$ Let $U_m=U_{\SO_{2l}}\cap H_m.$ Note that $U_{\SO_{2l}}=\cup_{m\geq 1} U_m$ and the character $\psi$ on $U_{\SO_{2l}}$ agrees with $\psi_m$ upon restriction to $U_m.$

We fix the split part, denoted $S_{\SO_{2l}}$, of the torus $T_{\SO_{2l}}$ to be the subset consisting of elements of the form $t=\mathrm{diag}(t_1,\dots,t_{l-1},I_2,t_{l-1}\inv,\dots,t_1\inv).$ Let $W(\SO_{2l})$ be the Weyl group of $\SO_{2l}$ and let $\Delta(\SO_{2l})$ be the set of simple roots with respect to $S_{\SO_{2l}}$. We can choose the simple roots to be given by $\alpha_i(t)=t_it_{i+1}\inv$ for $i=1,\dots,l-2$ and $\alpha_{l-1}(t)=t_{l-1}.$ In the general case of non-split groups, the root subgroups are not guaranteed to be isomorphic to $F$ and so we define elements of $U_{\SO_{2l}}$ that we need here.  Let $x\in F.$ For $i=1,\dots,l-2$ we define $\mathrm{\mathbf{x}}_{\alpha_i}(x)=(u_{i,j})_{i,j=1}^{2l}\in U_{\SO_{2l}}$ where $u_{j,j}=1,$ for any $j=1,\dots,2l,$ $ u_{i,i+1}=x, u_{2l-i,2l-(i-1)}=-x,$ and $u_{i,j}=0$ for any other pair $(i,j).$ We also define
\begin{align*}
    \mathrm{\mathbf{x}}_{\alpha_{l-1}}(x)=\left(\begin{matrix} I_{l-2} & & \\
    & \left(\begin{matrix}
        1 & 0 & x &\frac{\rho\inv x^2}{2} \\
        0 & 1 & 0 & 0 \\
        0 & 0 & 1 & \rho\inv x \\
        0 & 0 & 0 & 1
    \end{matrix}\right) & \\
    & & I_{l-2}\end{matrix}\right).
\end{align*}
It is checked directly that $\mathrm{\mathbf{x}}_{\alpha}(x)\in U_\alpha,$ where $U_\alpha$ denotes the root subgroup of $\alpha,$ for any $\alpha\in\Delta(\SO_{2l}).$

Now let $\alpha$ be any root of $\SO_{2l}$ and $U_\alpha$ be the root space of $\alpha.$ Set $U_{\alpha,m}=H_m\cap U_\alpha.$ For any simple root $\alpha$ and $x\in F,$ we verify directly that $\mathrm{\mathbf{x}}_{\alpha}(x)\in U_{\alpha,m}$ if and only if $x\in\mathfrak{p}^{-m}.$ This is an analogous result to \cite[Lemma 3.1]{Bar95}.

Any matrix in $K_m^{\GL_{2l}}$ can be decomposed uniquely as the product of a unipotent upper triangular matrix and a lower triangular matrix. This property is also inherited by $K_m$ and $H_m.$ Let $\overline{B}_{\SO_{2l}}$ be the opposite parabolic subgroup to $B_{\SO_{2l}}.$ We record the above fact in the following lemma.

\begin{lemma}\label{lemma UL decomposition}
We have
$$
    K_m=(U_{\SO_{2l}}\cap K_m)(\overline{B}_{\SO_{2l}}\cap K_m), \ \  H_m=(U_{\SO_{2l}}\cap H_m)(\overline{B}_{\SO_{2l}}\cap H_m). 
$$
\end{lemma}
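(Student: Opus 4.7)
The plan is to first establish the factorization inside $\GL_{2l}$ by Gaussian elimination, then descend to $\SO_{2l}$ by means of an involution argument, and finally deduce the $H_m$ statement by conjugating with $e_m$.

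For the $\GL_{2l}$ step, every $g \in K_m^{\GL_{2l}} = I_{2l} + \mathrm{Mat}_{2l \times 2l}(\mathfrak{p}^m)$ has all of its leading principal $k \times k$ minors congruent to $1 \pmod{\mathfrak{p}^m}$, and hence equal to units in $\mathfrak{o}$. Gaussian elimination therefore produces a unique factorization $g = u_0 \bar{b}_0$ with $u_0 \in U_{\GL_{2l}}$ upper unipotent and $\bar{b}_0 \in \overline{B}_{\GL_{2l}}$ lower triangular, and a direct induction on the matrix size shows that each entry of $u_0 - I_{2l}$ and $\bar{b}_0 - I_{2l}$ again lies in $\mathfrak{p}^m$; in particular both factors belong to $K_m^{\GL_{2l}}$.

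To descend to $K_m$, I would introduce the involution $\theta(g) = J_{2l,\rho}^{-1}\, {}^tg^{-1}\, J_{2l,\rho}$ on $\GL_{2l}$, whose fixed-point set is $\mathrm{O}(J_{2l,\rho})$. Since $J_{2l,\rho}$ is a generalized permutation matrix realizing the index reversal $i \mapsto 2l+1-i$ (it coincides with $J_{2l}$ except in the middle $2\times 2$ block, where it is diagonal with entries $1$ and $-\rho$), conjugation by $J_{2l,\rho}$ interchanges upper- and lower-triangular matrices; combined with $g \mapsto {}^tg^{-1}$, the map $\theta$ preserves both $U_{\GL_{2l}}$ and $\overline{B}_{\GL_{2l}}$. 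For any $g \in K_m \subseteq \SO_{2l}$ we have $\theta(g) = g$, so $g = u_0 \bar{b}_0 = \theta(u_0)\theta(\bar{b}_0)$; uniqueness of the $\GL_{2l}$-factorization forces $u_0 = \theta(u_0)$ and $\bar{b}_0 = \theta(\bar{b}_0)$, placing both factors in $\mathrm{O}(J_{2l,\rho})$. The condition $\det u_0 = 1$ is automatic, and combined with $\det g = 1$ it gives $\det \bar{b}_0 = 1$, so $u_0$ and $\bar{b}_0$ both lie in $\SO_{2l}$, and therefore in $K_m = K_m^{\GL_{2l}} \cap \SO_{2l}$.

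The $H_m$ statement is then formal. One verifies that $e_m \in T_{\SO_{2l}}$ (its middle $2\times 2$ block is $I_2$, corresponding to $a = 1$, $b = 0$, which indeed satisfies $a^2 - b^2\rho = 1$), so conjugation by $e_m$ preserves both $U_{\SO_{2l}}$ and $\overline{B}_{\SO_{2l}}$. Writing $h \in H_m$ as $e_m g e_m^{-1}$ with $g \in K_m$ and applying the $K_m$-decomposition $g = u_0 \bar{b}_0$ yields $h = (e_m u_0 e_m^{-1})(e_m \bar{b}_0 e_m^{-1})$, with the two factors in $U_{\SO_{2l}} \cap H_m$ and $\overline{B}_{\SO_{2l}} \cap H_m$ respectively. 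The main technical hurdle is the verification that $\theta$ preserves $U_{\GL_{2l}}$ and $\overline{B}_{\GL_{2l}}$ despite the twist in $J_{2l,\rho}$; once this is in hand, uniqueness of the $\GL_{2l}$ Gaussian elimination does all of the real work.
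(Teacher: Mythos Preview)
Your involution argument has a genuine gap.  You claim that $J_{2l,\rho}$ ``realizes the index reversal $i\mapsto 2l+1-i$''; this is precisely where the quasi-split form differs from the split one.  The middle $2\times 2$ block of $J_{2l,\rho}$ is \emph{diagonal} (entries $1$ and $-\rho$), so the underlying monomial permutation \emph{fixes} the indices $l$ and $l+1$ rather than swapping them.  Hence conjugation by $J_{2l,\rho}$ does not exchange upper and lower triangular matrices, and $\theta$ does \emph{not} preserve $U_{\GL_{2l}}$: for instance $\theta(I_{2l}+E_{l,l+1})=I_{2l}+\rho^{-1}E_{l+1,l}$ is strictly lower triangular.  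More to the point, the $\GL_{2l}$-Gaussian factors of an element of $K_m$ need not lie in $\SO_{2l}$ at all.  Take $g\in T_{\SO_{2l}}\cap K_m$ with middle block $\left(\begin{smallmatrix}a&b\rho\\ b&a\end{smallmatrix}\right)$ and $b\neq 0$; its upper-unipotent $\GL_{2l}$-factor has $(l,l+1)$-entry $b\rho/a\neq 0$, and one checks directly that no element $I_{2l}+xE_{l,l+1}$ with $x\neq 0$ belongs to $\SO_{2l}$.  The correct $\SO_{2l}$-decomposition here is simply $u=I_{2l}$, $\bar b=g$ (since $T_{\SO_{2l}}\subset\overline B_{\SO_{2l}}$), which is \emph{different} from the ambient $\GL_{2l}$-decomposition.

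The paper records the lemma essentially as a standard fact (the Iwahori factorization of a principal congruence subgroup of a reductive group).  If you want to rescue your hands-on argument, replace the full triangular decomposition by the block-triangular one for the partition $(1,\dots,1,2,1,\dots,1)$ of $2l$.  That block structure \emph{is} preserved by $\theta$, so uniqueness of block-Gaussian elimination forces the block-upper-unipotent and block-lower-triangular factors of $g\in K_m$ to lie in $\SO_{2l}\cap K_m^{\GL_{2l}}$.  The first factor then has middle block $I_2$ and is genuinely upper unipotent, hence lies in $U_{\SO_{2l}}\cap K_m$; a second application of the same uniqueness argument splits the second factor as (block diagonal)$\times$(block lower unipotent) inside $\SO_{2l}$, and the middle $2\times2$ block of the diagonal part lands in $\SO\!\left(\mathrm{diag}(1,-\rho)\right)$, which is exactly the anisotropic piece of $T_{\SO_{2l}}$, so the whole factor lies in $\overline B_{\SO_{2l}}\cap K_m$.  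The $H_m$-statement then follows by your $e_m$-conjugation as written.
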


Let $W\in C^\infty(\SO_{2l},\omega,\psi)$ with $W(I_{2l})=1.$ We define 
$$
W_m(g)=\frac{1}{\mathrm{Vol}(U_m)}\int_{U_m}\psi_m\inv(u) W(gu)du.
$$
Let $C=C(W)$ be the maximal positive integer such that $W$ is invariant under right translation by $K_C.$ If $m\geq C,$ then $W_m$ is called a Howe vector. The following lemma is the analogue of {\cite[Lemma 3.2]{Bar95}}.

\begin{lemma}\label{partialBesselprop}
We have
\begin{enumerate}
    \item $W_m(I_{2l})=1,$
    \item if $m\geq C,$ then  $W_m(gh)=\psi_m(h)W_m(g)$ for any $h\in H_m$ and $g\in\SO_{2l}.$
\end{enumerate}
\end{lemma}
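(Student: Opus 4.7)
The plan is to follow the strategy used for Howe vectors in the split case \cite{Bar95}, adapted to the non-split torus. Part (1) is essentially formal; part (2) reduces, by a standard Haar-averaging argument, to two structural observations about $H_m$ and $\psi_m$.

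For (1), I would first note that for $u\in U_m\subset U_{\SO_{2l}}$ the Whittaker property gives $W(u)=W(u\cdot I_{2l})=\psi(u)W(I_{2l})=\psi(u)$. A direct calculation of $(e_m^{-1}ue_m)_{i,i+1}$ for $1\leq i\leq l-2$ and of $(e_m^{-1}ue_m)_{l-1,l+1}$, using that the successive diagonal entries of $e_m$ in the relevant positions have valuations differing by $2m$, shows $\psi_m|_{U_m}=\psi|_{U_m}$. The integrand defining $W_m(I_{2l})$ is therefore identically $1$ on $U_m$, which gives $W_m(I_{2l})=1$.

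For (2), the two structural facts I need are: (a) $\overline{B}_{\SO_{2l}}\cap H_m\subset K_m\subset K_C$ whenever $m\geq C$; and (b) $\psi_m$ is trivial on $\overline{B}_{\SO_{2l}}\cap H_m$. Both follow from the same bookkeeping: writing an element of $\overline{B}_{\SO_{2l}}\cap H_m$ as $e_mke_m^{-1}$ with $k\in K_m\cap\overline{B}_{\SO_{2l}}$, conjugation by the diagonal $e_m$ only raises the valuation of the strictly lower-triangular entries (since the exponents of the diagonal entries of $e_m$ are weakly increasing) while preserving the diagonal; meanwhile, $\tau_m$ only detects strictly upper-triangular entries of $k$, which vanish since $k$ is lower triangular.

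Given (a) and (b), I would introduce the auxiliary averaged function
\[
\tilde W_m(g)=\frac{1}{\mathrm{Vol}(H_m)}\int_{H_m}\psi_m^{-1}(h)W(gh)\,dh.
\]
Since $U_m\cap(\overline{B}_{\SO_{2l}}\cap H_m)=\{I_{2l}\}$ and $H_m$ is a compact (hence unimodular) group equal to $U_m\cdot(\overline{B}_{\SO_{2l}}\cap H_m)$ by Lemma \ref{lemma UL decomposition}, the Haar measure on $H_m$ factors as the product of Haar measures on these two subgroups. Integrating over $\overline{B}_{\SO_{2l}}\cap H_m$ first and using (a) together with right $K_C$-invariance of $W$, as well as (b), the inner integral collapses to a constant, yielding $\tilde W_m=W_m$. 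Left-invariance of $dh$ then gives $\tilde W_m(gh)=\psi_m(h)\tilde W_m(g)$, whence $W_m(gh)=\psi_m(h)W_m(g)$. The main technical point, rather than a genuine obstacle, is the careful verification of (a), (b), and $\psi_m|_{U_m}=\psi|_{U_m}$ in the non-split setting; the averaging argument itself is formal and insensitive to whether the torus is split.
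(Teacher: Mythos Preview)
Your proposal is correct and follows essentially the same approach as the paper: define the full $H_m$-average $\tilde W_m$, use Lemma~\ref{lemma UL decomposition} together with $\overline{B}_{\SO_{2l}}\cap H_m\subset K_m$ and the right $K_C$-invariance of $W$ to identify $\tilde W_m$ with $W_m$, then read off the transformation law from the averaged integral. You spell out more of the bookkeeping (the verifications of $\psi_m|_{U_m}=\psi|_{U_m}$, of facts (a) and (b), and the Haar-measure factorization) than the paper does, but there is no substantive difference in the argument.
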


\begin{proof}
    Part (1) follows directly since $W(u)=\psi(u)W(I_{2l})=\psi(u)$ and $\psi(u)=\psi_m(u)$ for any $u\in U_m.$
    
    For Part (2), set
    \begin{align*}
W_m'(g)&=\frac{1}{\mathrm{Vol}(H_m)}\int_{H_m}\psi_m\inv(h) W(gh)dh.
    \end{align*}
    Since $m\geq C$ and $\overline{B}_{\SO_{2l}}\cap H_m\subseteq K_m$, by Lemma \ref{lemma UL decomposition},
        \begin{align*}
W_m'(g)&=\frac{1}{\mathrm{Vol}(U_m)\mathrm{Vol}(\overline{B}_{\SO_{2l}}\cap H_m)}\int_{U_m}\int_{\overline{B}_{\SO_{2l}}\cap H_m}\psi_m\inv(u) W(gub)dudb \\
&= \frac{1}{\mathrm{Vol}(U_m)}\int_{U_m}\psi_m\inv(u) W(gu)du \\
&=W_m(g).
    \end{align*}
From the definition, we have $W_m'(gh)=\psi_m(h)W_m'(g)$ and hence it also follows that $W_m(gh)=\psi_m(h)W_m(g).$
\end{proof}

Hence, for $m\geq C,$ we have
\begin{equation}\label{Besseleqn}
    W_m(ugh)=\psi(u)\psi_m(h)W_m(g),
\end{equation}
for any $u\in U_{\SO_{2l}}$, $h\in H_m$, and $g\in\SO_{2l}.$ For $m\geq C$, $W_m$ is called a {\it partial Bessel function}.

The next lemma shows that the Whittaker model of an irreducible generic
representation $\pi$ contains partial Bessel functions.

\begin{lemma}\label{lemma partial Bessel in Whittaker model}
    Let $\pi$ be an irreducible generic representation of $\SO_{2l}$ and $W\in\mathcal{W}(\pi,\psi)$ with $W(I_{2l})=1.$ Then we have $W_m\in\mathcal{W}(\pi,\psi).$
\end{lemma}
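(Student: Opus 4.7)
The plan is to realize $W_m$ as coming from an explicit vector in $\pi$. Since $\pi$ is generic, we have the Whittaker functional $\Gamma \in \mathrm{Hom}_{U_{\SO_{2l}}}(\pi,\psi)$ with the identification $\mathcal{W}(\pi,\psi) = \{W_v \mid v \in \pi\}$ where $W_v(g) = \Gamma(\pi(g)v)$. So by assumption we may fix $v \in \pi$ with $W = W_v$. The goal is to produce a vector $v_m \in \pi$ such that $W_m = W_{v_m}$.

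The natural candidate is
\[
v_m = \frac{1}{\mathrm{Vol}(U_m)}\int_{U_m} \psi_m^{-1}(u)\,\pi(u)v \, du.
\]
To see this vector makes sense, I would use smoothness of $\pi$: there exists a compact open subgroup $K' \subseteq \SO_{2l}$ fixing $v$, and then on $U_m \cap K'$ the integrand is constant (equal to $\psi_m^{-1}(u)v$), so the integration reduces to a finite sum over coset representatives of $U_m/(U_m \cap K')$. Hence $v_m$ is a genuine vector in $\pi$.

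Next I would verify $W_{v_m} = W_m$. For any $g \in \SO_{2l}$,
\[
W_{v_m}(g) = \Gamma(\pi(g)v_m) = \frac{1}{\mathrm{Vol}(U_m)}\int_{U_m}\psi_m^{-1}(u)\,\Gamma(\pi(g)\pi(u)v)\,du = \frac{1}{\mathrm{Vol}(U_m)}\int_{U_m}\psi_m^{-1}(u)\,W(gu)\,du,
\]
which equals $W_m(g)$ by definition. Here I use linearity of $\Gamma$, which commutes with the finite sum representation of the integral.

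The only step requiring mild care is the interchange of $\Gamma$ with the integral, and this is immediate once the integral is recognized as a finite linear combination by smoothness. Thus there is no substantial obstacle; the statement essentially follows from the fact that the Whittaker model is closed under right translation by elements of $\SO_{2l}$ together with the smoothness of $\pi$, which together imply closure under finite linear combinations indexed by integration against a locally constant character on a compact open subgroup.
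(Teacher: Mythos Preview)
Your proposal is correct and follows essentially the same argument as the paper: both define $v_m=\frac{1}{\mathrm{Vol}(U_m)}\int_{U_m}\psi_m^{-1}(u)\pi(u)v\,du$ and then verify directly that $W_{v_m}=W_m$. Your additional remarks on why the integral is a finite sum and why $\Gamma$ commutes with it are not spelled out in the paper but are implicit there.
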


\begin{proof}
    Let $V_\pi$ be the vector space on which the representation $\pi$ acts. Recall that we fixed $\Gamma\in\mathrm{Hom}_{U_{\SO_{2l}}}(\pi,\psi)$ to be a nonzero Whittaker functional. By definition, there exists $v\in V_\pi$ such that $W(g)=W_v(g)=\Gamma(\pi(g)v)$ for any $g\in\SO_{2l}.$ Since $W_v(I_{2l})=1,$ we have $\Gamma(v)=1.$
Set  
    \begin{align*}
v_m&=\frac{1}{\mathrm{Vol}(U_m)}\int_{U_m}\psi_m\inv(u) \pi(u)vdu.
    \end{align*}
    Then $W_{v_m}\in\mathcal{W}(\pi,\psi)$ and
    \begin{align*}
        W_{v_m}(g)=\Gamma(\pi(g)v_m)&=\frac{1}{\mathrm{Vol}(U_m)}\int_{U_m}\psi_m\inv(u) \Gamma(\pi(gu)v)dh \\
        &= \frac{1}{\mathrm{Vol}(U_m)}\int_{U_m}\psi_m\inv(u) W_v(gu)du \\
        &= W_m(g).
    \end{align*}
Thus $W_m\in\mathcal{W}(\pi,\psi).$
\end{proof}

Let $f\in C_c^\infty(\SO_{2l},\omega).$ We define $W^f\in C^\infty(\SO_{2l},\omega,\psi)$ by
$$
W^f(g)=\int_{U_{\SO_{2l}}} \psi\inv(u)f(ug)du,
$$
for $g\in\SO_{2l}.$ Since $f$ is locally constant and compactly supported, there exists a positive integer $C=C(f)$ such that $W^f$ is invariant under right translation by $K_C.$ We choose $f\in C_c^\infty(\SO_{2l},\omega)$ such that $W^f(I_{2l})=1.$ Then, for $m>C$, we consider the corresponding partial Bessel function defined by
$$
B_m(g;f):=(W^f)_m(g)=\frac{1}{\mathrm{Vol}(U_m)}\int_{ U_{\SO_{2l}}\times U_m}\psi\inv(u)\psi_m\inv(u') f(ugu')dudu'.
$$

\subsection{Conjugate of partial Bessel Function}
Let $\psi_c$ be the character on $U_{\SO_{2l}}$ defined by $\psi_c(u)=\psi(cuc).$ That is, 
$$
\psi_c(u)= \psi\left(\sum_{i=1}^{l-2} u_{i,i+1}-\frac{1}{2}u_{l-1,l+1}  \right),
$$
for any $u=(u_{i,j})_{i,j=1}^{2l}\in U_{\SO_{2l}}.$ Recall that we fixed a nonzero $\Gamma\in\mathrm{Hom}_{U_{\SO_{2l}}}(\pi,\psi).$ Then $\Gamma\in\mathrm{Hom}_{U_{\SO_{2l}}}(\pi^c,\psi_c)$ is also nonzero. Note that $\pi^c$ is also $\psi$-generic. We relate the partial Bessel functions of $\pi$ and $\pi^c$ below.

 Fix $\tilde{t}:=\mathrm{diag}(I_{l-1},-1,-1,I_{l-1})\in T_{\SO_{2l}}.$ It is checked directly that $\psi_c(\tilde{t}\inv u\tilde{t})=\psi(u).$  We also have $c K_m c = K_m,$ $c U_m c = U_m,$ and $\psi_m(c\tilde{t}\inv u \tilde{t}c)=\psi_m(u).$ Given $W\in C^\infty(\SO_{2l},\omega,\psi)$, define $W^c\in C^\infty(\SO_{2l},\omega,\psi)$ by
$W^c(g)=W(c\tilde{t}\inv g \tilde{t}c).$ If $W(I_{2l})=1,$ we also define $W_m^c=(W^c)_m.$ For $f\in C_c^\infty(\SO_{2l},\omega)$, we set $W^{f,c}=(W^f)^c$ and if $W^f(I_{2l})=1,$ then we set $B_m^c(g;f)= (W^{f,c})_m.$

\begin{lemma}\label{lemma conj bessel}
Suppose $W\in C^\infty(\SO_{2l},\omega,\psi)$ and $W(I_{2l})=1.$ Then,
\begin{enumerate}
    \item $W^c(I_{2l})=1,$
    \item $W^c_m=(W_m)^c,$
    \item $C(W)=C(W^c),$
    \item for $m\geq C,$ we have
$$
W_m^c(ugh)=\psi(u)\psi_m(h)W_m^c(g),
$$
for any $u\in U_{\SO_{2l}}$, $h\in H_m$, and $g\in\SO_{2l}.$
\end{enumerate}
\end{lemma}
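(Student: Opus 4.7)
The plan is to derive all four statements from the defining formula $W^c(g)=W(c\tilde{t}\inv g\tilde{t}c)$ together with the compatibility identities assembled just before the lemma: $\psi_c(\tilde{t}\inv u\tilde{t})=\psi(u)$, $cK_mc=K_m$, $cU_mc=U_m$, and $\psi_m(c\tilde{t}\inv u\tilde{t}c)=\psi_m(u)$. The additional ingredient I need is that $\tilde{t}\in T_{\SO_{2l}}$ is diagonal with $\pm 1$ entries, so it commutes with $e_m$ and normalizes the congruence group $K_m^{\GL_{2l}}$; these together give $\tilde{t}K_m\tilde{t}\inv=K_m$ and $\tilde{t}H_m\tilde{t}\inv=H_m$. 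Combined with the corresponding statements for $c$, the involution $g\mapsto c\tilde{t}\inv g\tilde{t}c$ stabilizes $U_{\SO_{2l}}$, $U_m$, $H_m$, and $K_m$, and is self-inverse because $c$ and $\tilde{t}$ are commuting involutions.

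Part (1) is immediate: plugging $g=I_{2l}$ into the definition gives $W^c(I_{2l})=W(c\tilde{t}\inv\tilde{t}c)=W(I_{2l})=1$. For part (3), note that $W^c$ is invariant under right translation by $k\in K_m$ if and only if $W$ is invariant under $c\tilde{t}\inv k\tilde{t}c$, which traverses $K_m$ by the stability above; applying the same reasoning to $(W^c)^c=W$ gives the reverse inequality, so $C(W)=C(W^c)$.

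Part (2) is the main computation. In the averaging integral defining $(W^c)_m(g)$, I would substitute $v=c\tilde{t}\inv u\tilde{t}c$. This is a continuous automorphism of the compact group $U_m$, so it preserves Haar measure, and the identity $\psi_m(c\tilde{t}\inv u\tilde{t}c)=\psi_m(u)$ yields $\psi_m\inv(u)=\psi_m\inv(v)$. A direct manipulation gives $c\tilde{t}\inv gu\tilde{t}c=(c\tilde{t}\inv g\tilde{t}c)\cdot v$, and the resulting integral is exactly the defining formula for $W_m$ evaluated at $c\tilde{t}\inv g\tilde{t}c$, which equals $(W_m)^c(g)$ by definition.

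Finally, part (4) combines part (2) with the equivariance equation (\ref{Besseleqn}) for $W_m$, valid since $m\geq C=C(W)=C(W^c)$ by part (3). Writing
\begin{equation*}
W_m^c(ugh)=W_m\bigl((c\tilde{t}\inv u\tilde{t}c)(c\tilde{t}\inv g\tilde{t}c)(c\tilde{t}\inv h\tilde{t}c)\bigr),
\end{equation*}
and noting that $c\tilde{t}\inv u\tilde{t}c\in U_{\SO_{2l}}$ and $c\tilde{t}\inv h\tilde{t}c\in H_m$, equation (\ref{Besseleqn}) pulls out $\psi(c\tilde{t}\inv u\tilde{t}c)\psi_m(c\tilde{t}\inv h\tilde{t}c)W_m(c\tilde{t}\inv g\tilde{t}c)=\psi(u)\psi_m(h)W_m^c(g)$. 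The first character identity uses $\psi(cxc)=\psi_c(x)$ together with $\psi_c(\tilde{t}\inv u\tilde{t})=\psi(u)$, and the second is a direct quote of the cited compatibility identity for $\psi_m$. The only genuinely nonformal input is the normalization by $\tilde{t}$ of $K_m$ and $H_m$, and this is routine once one observes that $\tilde{t}$ is diagonal; the rest is pure rearrangement of the given identities.
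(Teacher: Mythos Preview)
Your proof is correct and follows essentially the same approach as the paper's own argument, which is little more than a four-line sketch invoking the same compatibility identities you cite. Your version simply makes explicit the change of variables $v=c\tilde{t}\inv u\tilde{t}c$ in part (2) and the character reductions in part (4), and correctly notes that the $\psi_m$ identity must be applied on all of $H_m$ (not just $U_m$) and that $\tilde{t}$ normalizes $K_m$ and $H_m$; these are exactly the checks the paper leaves to the reader.
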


\begin{proof}
$(1)$ is immediate from the definition. By definition and since $c\tilde{t}\inv U_m\tilde{t} c = U_m$, we obtain $(2).$  $(3)$ follows from the fact $c K_m c = K_m.$ Finally, $(4)$ is verified directly from the previous results and definitions. \end{proof}

Thus, we can see that given a partial Bessel function, $W_m$ or $B_m,$ we may construct another partial Bessel function via the map $W_m\mapsto W_m^c$ or $B_m\mapsto B_m^c.$ In fact, this map sends partial Bessel functions in the Whittaker model of $\pi$ to partial Bessel functions in the Whittaker model of $\pi^c.$ Consequently, by the uniqueness of Whittaker models, this map is the identity map on the Whittaker model of $\pi$ if and only if $\pi\cong \pi^c.$

\subsection{Bessel Support}\label{Besselsupp}

Recall $W(\SO_{2l})$ is the Weyl group and $\Delta(\SO_{2l})$ is the set of simple roots of $\SO_{2l}$. We say that an element $w\in W(\SO_{2l})$ supports partial Bessel functions if for any $\alpha\in\Delta(\SO_{2l})$, $w\alpha$ is either negative or simple. Set B($\SO_{2l}$) to be the set of Weyl elements which support partial Bessel functions. B($\SO_{2l}$) is called the Bessel support. Note that the partial Bessel functions which we are interested in vanish on the Bruhat cells corresponding to Weyl elements outside of the Bessel support (\cite[Proposition 8.6]{CPSSh05}).

The standard maximal parabolic subgroups of  $\SO_{2l}$ are $P_n=M_n N_n$ for $n\leq l-1$ where $M_n$ is the standard Levi subgroup that is isomorphic to $\GL_n\times\SO_{2(l-n)}$. Let $w_{long}$ denote the long Weyl element of $\SO_{2l}$ and $w_{M_n}$ be the long Weyl element in $M_n.$ Set $\tilde{w}_n:=w_{long}\inv w_{M_n}.$ The analogues of these elements played an important role in partitioning the Bessel support for other groups both locally and over finite fields (\cite{Haz23a, HL22a, HL22b, Jo22, LZ22a, Zha18, Zha19}) and they continue to do so in our case. To be explicit, we have
$$
w_{long}=\left(\begin{matrix}
 & & J_{l-1} \\
 & \left(\begin{matrix}
 -1 & 0 \\
 0 & 1 
 \end{matrix}\right)^{l-1} & \\
 J_{l-1} & &
\end{matrix}\right), 
w_{M_n}=\left(\begin{matrix}
J_n & &  \\
 & w_{long,2(l-n)} & \\
  & & J_n
\end{matrix}\right),$$
where $w_{long,2(l-n)}$ is the long Weyl element of $\SO_{2(l-n)}.$
Hence,
$$
\tilde{w}_n=\left(\begin{matrix}
& & & & I_n \\
 & I_{l-n-1} &  & & \\
 & & \left(\begin{matrix}
 -1 & 0 \\
 0 & 1 
 \end{matrix}\right)^{n} & & & \\
 & & & I_{l-n-1} &  \\
 I_n & & & &
\end{matrix}\right).
$$
Recall that the definition of the intertwining operator $M(\tau,\psi\inv)$ involved the Weyl element $w_n$ of $\SO_{2n+1}.$ The image of $w_n$ under the embedding of $\SO_{2n+1}$ into $\SO_{2l}$ and conjugated by $w^{l,n}$ is $\tilde{w}_n$ (this conjugation is included in the definition of our zeta integrals). That is, $w^{l,n} w_n (w^{l,n})^{-1}=\tilde{w}_n.$

Let $t$ be in the split part of the torus and write
$
t=\mathrm{diag}(t_1,\dots,t_{l-1},I_2,t_{l-1}\inv,\dots,t_1\inv).
$ 
Then 
$
\tilde{w}_n\inv t \tilde{w}_n=\mathrm{diag}(t_n\inv,\dots,t_1\inv,t_{n+1},\dots,t_{l-1},I_2,t_{l-1}\inv,\dots,t_{n+1}\inv,t_1,\dots,t_n).
$
Thus, we have
$$\tilde{w}_n\alpha_i = \alpha_{n-i} \, \, \mathrm{for} \, \, 1\leq i \leq n-1,$$
$$\tilde{w}_n\alpha_n (t) = t_1\inv t_{n+1}\inv,$$ 
$$\tilde{w}_n\alpha_i=\alpha_i \, \, \mathrm{for} \, \, n+1\leq i \leq l-1.$$ 

Let $\theta_w=\{\alpha\in\Delta(\SO_{2l}) \, | \, w \alpha\in\Phi^+(\SO_{2l})\}.$ The map $w\mapsto \theta_w$ defines a bijection from $\mathrm{B}(\SO_{2l})$ to the power set of $\Delta(\SO_{2l})$, which we denote by $\mathcal{P}(\Delta(\SO_{2l}))$. For $n\leq l-1,$ we have $\theta_{\tilde{w}_n}=\Delta(\SO_{2l})\setminus \{\alpha_n\}$.

For $n\leq l-1,$ let $\mathrm{B}_n(\SO_{2l})$ be the set of $w\in \mathrm{B}(\SO_{2l})$ such that there exists $w'\in W(\GL_n)$ such that $w=t_n(w')\tilde{w}_n.$ We set $\mathrm{B}_0(\SO_{2l})=\{I_{2l}\}.$ Also, we define 
$
P_n=\{ \theta\subseteq\Delta(\SO_{2l}) \, | \, w_\theta\in \mathrm{B}_n(\SO_{2l})\}.
$
The following proposition is implicit in \cite{LZ22a}.
\begin{prop}[{\cite[Proposition 4.5]{Haz23a}}]\label{Besselprtnon}
$$P_n=\{ \theta\subseteq\Delta(\SO_{2l}) \, | \, \{\alpha_{n+1},\dots,\alpha_{l-1}\} \subseteq \theta \subseteq \Delta(\SO_{2l})\setminus \{\alpha_n\}\}.$$
\end{prop}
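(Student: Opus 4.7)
The plan is to establish the set equality by combining the explicit actions of $\tilde{w}_n$ on the simple roots (already given in the text) with the action of $t_n(w')$ on the torus, followed by a short counting argument.

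For the forward inclusion, let $w=t_n(w')\tilde{w}_n\in\mathrm{B}_n(\SO_{2l})$. Writing characters of the split torus in coordinates $e_1,\dots,e_{l-1}$, the element $t_n(w')$ permutes $e_1,\dots,e_n$ among themselves and fixes $e_{n+1},\dots,e_{l-1}$, since $t_n$ embeds $\GL_n$ as the first factor of the Levi $\GL_n\times\SO_{2(l-n)}\subseteq\SO_{2l}$. Combining this with the given relation $\tilde{w}_n\alpha_i=\alpha_i$ for $n+1\leq i\leq l-1$ yields $w\alpha_i=\alpha_i$ for each such $i$, so $\{\alpha_{n+1},\dots,\alpha_{l-1}\}\subseteq\theta_w$. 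Similarly, $\tilde{w}_n\alpha_n$ is negative (equal to $-e_1-e_{n+1}$ when $n\leq l-2$, and $-e_1$ when $n=l-1$), so $w\alpha_n$ remains negative after applying $t_n(w')$, giving $\alpha_n\notin\theta_w$.

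For the reverse inclusion, the key observation is that for $1\leq i\leq n-1$, the root $w\alpha_i=t_n(w')\alpha_{n-i}$ lies entirely inside the type $A_{n-1}$-subsystem of $\SO_{2l}$ spanned by $\alpha_1,\dots,\alpha_{n-1}$, which is canonically identified with the root system of $\GL_n$. Hence $w\alpha_i$ is simple (as some $\alpha_j$, $1\leq j\leq n-1$) or negative in $\SO_{2l}$ exactly when $w'\beta_{n-i}$ is simple or negative in $\GL_n$. Consequently $t_n(w')\tilde{w}_n\in\mathrm{B}(\SO_{2l})$ if and only if $w'$ lies in the Bessel support $\mathrm{B}(\GL_n)$ of $\GL_n$, producing an injection $\mathrm{B}(\GL_n)\hookrightarrow\mathrm{B}_n(\SO_{2l})$ whose image under $w\mapsto\theta_w$ lies in the right-hand side by the forward inclusion.

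A cardinality count then finishes the proof. The analogue of the bijection $w\mapsto\theta_w$ applied to $\GL_n$ gives $|\mathrm{B}(\GL_n)|=2^{n-1}$, while the right-hand side also has $2^{n-1}$ elements since one freely chooses which of $\alpha_1,\dots,\alpha_{n-1}$ to include. Therefore the injection above must be a bijection, forcing $P_n$ to equal the right-hand side. The only mild obstacle is verifying the action of $t_n(w')$ on the torus coordinates and recognizing the $A_{n-1}$-subsystem inside the relative root system of $\SO_{2l}$; both points follow directly from the explicit matrix form of the Levi embedding and the simple-root formulas already recorded.
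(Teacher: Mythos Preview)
Your argument is correct. The paper itself does not supply a proof of this proposition; it merely records the statement with a citation to \cite[Proposition 4.5]{Haz23a} (and remarks that it is implicit in \cite{LZ22a}), so there is no in-paper argument to compare against. Your approach---checking directly that $w\alpha_i=\alpha_i$ for $i\geq n+1$ and $w\alpha_n<0$ for every $w'\in W(\GL_n)$, then reducing the remaining conditions on $\alpha_1,\dots,\alpha_{n-1}$ to membership of $w'$ in $\mathrm{B}(\GL_n)$, and finishing by the count $|\mathrm{B}(\GL_n)|=2^{n-1}=|\text{RHS}|$---is exactly the natural direct verification and is essentially what underlies the cited proof. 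One tiny stylistic point: since your ``if and only if'' already shows the map $w'\mapsto t_n(w')\tilde{w}_n$ is a bijection $\mathrm{B}(\GL_n)\to\mathrm{B}_n(\SO_{2l})$ (surjectivity is the definition of $\mathrm{B}_n(\SO_{2l})$), the counting step is not strictly needed, though it does no harm.
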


It follows immediately that the sets $\mathrm{B}_n(\SO_{2l})$ for $n\leq l-1$ partition the Bessel support.
\begin{cor}[{\cite[Corollary 4.6]{Haz23a}}]\label{Besselpartnonsplit} The sets $\mathrm{B}_n(\SO_{2l})$ for $n\leq l-1$ form a partition of the Bessel support $\mathrm{B}(\SO_{2l}).$
\end{cor}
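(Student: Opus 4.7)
The plan is to reduce the corollary to a combinatorial statement about subsets of $\Delta(\SO_{2l})$ via the bijection $w \mapsto \theta_w$ between $\mathrm{B}(\SO_{2l})$ and $\mathcal{P}(\Delta(\SO_{2l}))$ mentioned earlier in \S\ref{Besselsupp}. Under this bijection, $\mathrm{B}_n(\SO_{2l})$ corresponds to $P_n$, so it suffices to show that the sets $P_n$, $0 \leq n \leq l-1$, form a partition of $\mathcal{P}(\Delta(\SO_{2l}))$. For $n \geq 1$, Proposition \ref{Besselprtnon} gives the explicit description
$$P_n = \{ \theta \subseteq \Delta(\SO_{2l}) \, | \, \{\alpha_{n+1},\dots,\alpha_{l-1}\} \subseteq \theta \subseteq \Delta(\SO_{2l}) \setminus \{\alpha_n\}\},$$
and by convention $P_0 = \{\Delta(\SO_{2l})\}$, consistent with $\mathrm{B}_0(\SO_{2l}) = \{I_{2l}\}$ and $\theta_{I_{2l}} = \Delta(\SO_{2l})$.

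For existence, given any $\theta \subseteq \Delta(\SO_{2l})$, I would consider two cases. If $\theta = \Delta(\SO_{2l})$, then $\theta \in P_0$. Otherwise, let $n$ be the largest index in $\{1,\dots,l-1\}$ with $\alpha_n \notin \theta$. By the maximality of $n$, we have $\alpha_i \in \theta$ for every $i$ with $n+1 \leq i \leq l-1$, which together with $\alpha_n \notin \theta$ places $\theta$ in $P_n$.

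For uniqueness, suppose $\theta \in P_n \cap P_{n'}$ with $n < n'$ and both positive. Membership in $P_n$ forces $\alpha_{n'} \in \theta$ since $n' \in \{n+1,\dots,l-1\}$, while membership in $P_{n'}$ forces $\alpha_{n'} \notin \theta$, a contradiction. The case $n = 0 < n'$ is similarly impossible: $\theta \in P_0$ gives $\theta = \Delta(\SO_{2l})$, but $\theta \in P_{n'}$ requires $\alpha_{n'} \notin \theta$. Hence the $P_n$ are pairwise disjoint, and combined with the previous paragraph they partition $\mathcal{P}(\Delta(\SO_{2l}))$, yielding the corollary. Since the main work has already been done in Proposition \ref{Besselprtnon}, there is no serious obstacle here; the entire argument is a short combinatorial verification on subsets of $\Delta(\SO_{2l})$.
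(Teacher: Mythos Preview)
Your proof is correct and follows the same approach implicit in the paper, which simply asserts that the corollary ``follows immediately'' from Proposition~\ref{Besselprtnon}; you have just written out the straightforward combinatorial verification that the paper leaves to the reader.
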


\subsection{Bruhat Order}\label{Bruhat order}
Much of the discussion in this section follows \cite{HL22b, Zha18, Zha19}. For $w\in W(\SO_{2l}),$ let $C(w)=B_{\SO_{2l}}wB_{\SO_{2l}}$. We define the Bruhat order on $W(\SO_{2l})$ by $w \leq w'$ if and only if $C(w)\subseteq \ol{C(w')}.$ Then
$$
\ol{C(w')}=\bigsqcup_{
w\leq w'} C(w).
$$
We also let $$\Omega_w = \bigsqcup_{ w'\geq w} C(w').$$ Consequently, $C(w)$ is closed in $\Omega_w.$ 

The following Proposition is \cite[Proposition 4.3]{Zha18}. There it was proved for symplectic groups, but the proof is general enough to apply directly to our case.

\begin{prop}[{\cite[Proposition 4.3]{Zha18}}]\label{Omega_w}
\begin{enumerate}
\item[]
    \item If $w,w'\in W(\SO_{2l})$ with $w'>w$ then $\Omega_{w'}$ is an open subset of $\Omega_w.$ In particular, for any $w\in W(\SO_{2l})$, $\Omega_w$ is open in $\Omega_{I_{2l}}=\SO_{2l}.$
    \item Let $P$ be a standard parabolic subgroup of $\SO_{2l}$ and $w\in W(\SO_{2l}).$ Then $PwP\cap\Omega_w$ is closed in $\Omega_w.$
\end{enumerate}
\end{prop}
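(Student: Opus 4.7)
Plan for Proposition \ref{Omega_w}.

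For Part (1), the plan is to prove $\Omega_w$ is open in $G := \SO_{2l}$ by showing its complement is closed. Writing
\[
G \setminus \Omega_w \;=\; \bigsqcup_{w'' \not\geq w} C(w''),
\]
the closure $\overline{C(w'')} = \bigsqcup_{v \leq w''} C(v)$ is contained in $G \setminus \Omega_w$ for each such $w''$: if $v \leq w''$ satisfied $v \geq w$, then $w \leq v \leq w''$ would contradict $w'' \not\geq w$. Since the Weyl group is finite, $G \setminus \Omega_w$ is a finite union of closed sets, hence closed. The remaining assertions are immediate: for $w' \geq w$, $\Omega_{w'} \subseteq \Omega_w$ is open in $G$ by the same argument, so open in $\Omega_w$; and $\Omega_{I_{2l}} = G$ since $I_{2l}$ is the Bruhat minimum.

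For Part (2), the plan is to prove the identity
\[
PwP \cap \Omega_w \;=\; \overline{PwP} \cap \Omega_w,
\]
where $\overline{PwP}$ denotes closure in $G$. Since $\overline{PwP}$ is closed in $G$ and $\Omega_w$ is open in $G$ by Part (1), this exhibits $PwP \cap \Omega_w$ as a closed subset of $\Omega_w$. To justify the identity, I would use the Bruhat refinement $PwP = \bigsqcup_{u \in W_P w W_P} C(u)$, where $W_P$ is the Weyl group of the Levi factor of $P$, together with the standard description $\overline{PwP} = \bigsqcup_{v \leq w_{\max}} C(v)$, where $w_{\max}$ is the longest element of the double coset $W_P w W_P$. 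The identity then reduces to the combinatorial statement: if $w \leq v \leq w_{\max}$ in the Bruhat order, then $v \in W_P w W_P$.

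To prove this combinatorial statement, I would invoke the monotonicity of the map $\tilde w \colon W(\SO_{2l}) \to W(\SO_{2l})$ sending an element to the minimal-length representative of its $W_P$-double coset. Writing $w_{\min}$ for the minimum element of $W_P w W_P$ (so $\tilde w(w) = \tilde w(w_{\max}) = w_{\min}$), monotonicity applied to $w \leq v \leq w_{\max}$ squeezes $\tilde w(v) = w_{\min}$, forcing $v \in W_P w_{\min} W_P = W_P w W_P$. The main technical obstacle is precisely this monotonicity of the double-coset minimum under the Bruhat order: while classical for Coxeter groups, it must be handled carefully via the subword characterization of the Bruhat order and manipulation of reduced expressions, and it is the only place where one genuinely uses combinatorial features of parabolic double cosets beyond the general topological facts about Bruhat stratifications.
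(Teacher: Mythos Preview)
The paper does not prove this proposition; it simply records that the argument in \cite[Proposition 4.3]{Zha18} for symplectic groups carries over. Your plan is correct and supplies a self-contained proof.

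Part (1) is complete as written.

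For Part (2), your reduction to the combinatorial statement ``$w \leq v \leq w_{\max}$ implies $v \in W_P w W_P$'' is correct. That statement follows at once from the fact that a parabolic double coset $W_P w W_P$ in a Coxeter group equals the Bruhat interval $[w_{\min}, w_{\max}]$; then $[w,w_{\max}] \subseteq [w_{\min},w_{\max}] = W_P w W_P$. This interval property is precisely what the paper itself invokes later, in the proof of Lemma \ref{P_{k, 1^{l-k}}}, via \cite[Proposition 2]{BKPST18}, so you may cite it directly rather than routing through the monotonicity of the double-coset projection. The monotonicity you propose is also true and your squeeze argument would work, but note that the composition of the two one-sided projections $W \to W^P$ and $W \to {}^P W$ does not obviously land in ${}^P W^P$ without an additional check; invoking the interval description avoids this and is the cleaner path.
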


Suppose that $w\in \mathrm{B}(\SO_{2l}).$ Then, $w_l w$ is the longest Weyl element of some standard Levi subgroup which we denote by $M_w.$ 

\begin{lemma}[{\cite[Proposition 2.1]{CPSSh05}}]\label{CPSSh}
Let $w,w'\in \mathrm{B}(\SO_{2l}).$ Then $w'\leq w$ if and only if $M_{w}\subseteq M_{w'}$ if and only if $\theta_{w}\subseteq\theta_{w'}.$
\end{lemma}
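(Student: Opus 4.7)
My plan is to reduce the three-way equivalence to well-known Coxeter-theoretic properties of longest elements in parabolic subgroups of the Weyl group $W(\SO_{2l})$.

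First, I would establish (b) $\Leftrightarrow$ (c) by computing $\theta_w$ explicitly. Writing $w = w_l w_{M_w}$ and letting $\Delta_{M_w}\subseteq \Delta(\SO_{2l})$ denote the set of simple roots of the standard Levi $M_w$: for $\alpha\in\Delta_{M_w}$, we have $w_{M_w}\alpha\in -\Phi^+_{M_w}\subseteq\Phi^-(\SO_{2l})$, and applying $w_l$ yields a positive root (necessarily simple by the hypothesis $w\in\mathrm{B}(\SO_{2l})$), so $\alpha\in\theta_w$. For $\alpha\in\Delta(\SO_{2l})\setminus\Delta_{M_w}$, the standard fact that $W_{M_w}$ permutes $\Phi^+(\SO_{2l})\setminus\Phi^+_{M_w}$ ensures $w_{M_w}\alpha\in\Phi^+(\SO_{2l})$, hence $w\alpha = w_l(w_{M_w}\alpha)\in\Phi^-(\SO_{2l})$ and $\alpha\notin\theta_w$. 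This shows $\theta_w = \Delta_{M_w}$, giving (b) $\Leftrightarrow$ (c).

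For (a) $\Leftrightarrow$ (b), I would use the well-known property that left multiplication by $w_l$ reverses the Bruhat order: $u\leq v$ iff $w_l v\leq w_l u$. Applying this, $w'\leq w$ is equivalent to $w_l w\leq w_l w'$, i.e., $w_{M_w}\leq w_{M_{w'}}$ in the Bruhat order on $W(\SO_{2l})$. Hence the lemma reduces to the standard assertion that for two standard Levi subgroups $M, M'$ of $\SO_{2l}$, the longest elements of their Weyl groups satisfy $w_M\leq w_{M'}$ iff $M\subseteq M'$ (equivalently, $\Delta_M\subseteq\Delta_{M'}$).

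The forward direction of this reduced statement is immediate: $M\subseteq M'$ implies $w_M\in W_{M'}$, and every element of $W_{M'}$ is bounded above by its longest element $w_{M'}$ in the Bruhat order. The reverse direction uses the subword property of Bruhat order: if $w_M\leq w_{M'}$, then some reduced expression for $w_M$ occurs as a subword of a reduced expression for $w_{M'}$; since the latter uses only simple reflections from $\Delta_{M'}$, so does the former, forcing $w_M\in W_{M'}$. But the support of $w_M$ (the set of simple reflections appearing in any reduced expression) is exactly $\Delta_M$, because $w_M$ is the longest element of $W_M$, so $\Delta_M\subseteq\Delta_{M'}$. I do not expect an essential obstacle: the calculation in Step~1, though requiring care, is purely combinatorial in the root system and is unaffected by the quasi-split structure of $\SO_{2l}$, while the Coxeter-theoretic facts invoked in the reduction are classical.
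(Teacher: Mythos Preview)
Your argument is correct. The paper does not supply its own proof of this lemma; it simply cites \cite[Proposition 2.1]{CPSSh05} and uses the result as a black box. Your approach---computing $\theta_w=\Delta_{M_w}$ directly from the definition of $\mathrm{B}(\SO_{2l})$, then invoking the order-reversing property of left multiplication by $w_{long}$ together with the subword characterization of Bruhat order---is the standard Coxeter-theoretic route and is essentially what one finds in the cited reference. No adjustment is needed for the quasi-split structure, as you note, since the statement lives entirely in the Weyl group.
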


Suppose $w,w'\in \mathrm{B}(\SO_{2l})$ with  $w'<w.$ Following \cite{Jac16},  define the Bessel distance between $w$ and $w'$ as
$$
d_B(w,w')=\mathrm{max}\{m \, | \, \exists \, w_i\in \mathrm{B}(\SO_{2l}) \, \mathrm{with} \, w=w_m > w_{m-1} > \cdots > w_0=w' \}.
$$ 
From the previous lemma, if $d_B(w,I_{2l})=1,$ then there exists $\alpha\in\Delta(\SO_{2l})$ such that $\theta_w=\Delta(\SO_{2l})\backslash\{\alpha\}.$ Since there are $l-1$ simple roots,  $w$ must be one of $\tilde{w}_n$ for $n\leq l-1$.
For $w_1,w_2\in W(\SO_{2l})$ with $w_1\leq w_2$, define the closed Bruhat interval as
$$
[w_1,w_2]=\{w\in W(\SO_{2l}) \, | \, w_1\leq w \leq w_2 \}. 
$$
For $n\leq l-1,$ let $P_{n, 1^{l-n}}$ be the standard parabolic subgroup of $\SO_{2l}$ with Levi subgroup $L_{n,1^{l-n}}\cong \GL_n\times\GL_1^{l-n-1}\times\SO_2.$

\begin{lemma}\label{P_{k, 1^{l-k}}}
For $n\leq l-1,$ the set 
$$
\{w\in W(\SO_{2l}) \, | \, C(w) \subseteq P_{n, 1^{l-n}} \tilde{w}_n P_{n, 1^{l-n}}\}
$$
is equal to the Bruhat interval $[\tilde{w}_n, w_l^{L_{n, 1^{l-n}}}\tilde{w}_n]$ where $w_l^{L_{n, 1^{l-n}}}$ is the long Weyl element in $L_{n, 1^{l-n}}.$
\end{lemma}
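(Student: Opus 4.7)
The plan is to first apply the standard parabolic Bruhat decomposition: for any $w \in W(\SO_{2l})$ and any standard parabolic $P_J$,
$$P_J w P_J = \bigsqcup_{w' \in W_J w W_J} C(w').$$
With $J = \{\alpha_1, \ldots, \alpha_{n-1}\}$ (so that $P_J = P_{n, 1^{l-n}}$ and $W_J = W_L \cong S_n$), this identifies the set in the lemma with the double coset $W_L \tilde{w}_n W_L$, reducing the task to the combinatorial identity
$$W_L \tilde{w}_n W_L = [\tilde{w}_n, w_l^L \tilde{w}_n].$$

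Next, I would verify three properties of $\tilde{w}_n$ using its action on the split torus described in Section \ref{Besselsupp}: (i) $\tilde{w}_n \alpha_i = \alpha_{n-i}$ for $1 \leq i \leq n-1$, so $\tilde{w}_n$ is an involution normalizing $W_L$, whence $W_L \tilde{w}_n W_L = W_L \tilde{w}_n = \tilde{w}_n W_L$, a set of size $n!$; (ii) $\tilde{w}_n$ sends each $\alpha_i \in J$ to a positive root, so it is the minimal length representative of its coset, i.e., $\tilde{w}_n \in W^J$; (iii) $\tilde{w}_n$ commutes with $w_l^L$, since both $\tilde{w}_n w_l^L$ and $w_l^L \tilde{w}_n$ act on $e_i$ by $e_i \mapsto -e_i$ for $1 \leq i \leq n$ and trivially otherwise.

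Finally, with $u_0 = \tilde{w}_n \in W^J$ and $v_0 = w_l^L \in W_L$, I would invoke Deodhar's description of Bruhat order on parabolic quotients: every $w \in W(\SO_{2l})$ has a unique length-additive decomposition $w = uv$ with $u \in W^J$ and $v \in W_L$, and $w \leq u_0 v_0$ iff $u \leq u_0$ in $W(\SO_{2l})$ and $v \leq v_0$ in $W_L$. Applied to both endpoints, $\tilde{w}_n \leq w \leq \tilde{w}_n w_l^L$ forces $u = \tilde{w}_n$ and $v \in [e, w_l^L]_{W_L} = W_L$ (using that $w_l^L$ is the long element of $W_L$), so $[\tilde{w}_n, \tilde{w}_n w_l^L] = \tilde{w}_n W_L$; then (i) and (iii) convert this to $[\tilde{w}_n, w_l^L \tilde{w}_n] = W_L \tilde{w}_n W_L$. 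The main potential obstacle is the clean application of Deodhar's theorem -- without the normalization (i) and commutativity (iii), parabolic double cosets need not be Bruhat intervals at all, so the lemma's validity reflects the specific structural features of $\tilde{w}_n$ in the relative Weyl group of type $B_{l-1}$.
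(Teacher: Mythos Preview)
Your proposal is correct and lands on the same identification as the paper: the set is the double coset $W_L\tilde{w}_n W_L$, which then collapses to the single coset $\tilde{w}_n W_L$ and equals $[\tilde{w}_n,\tilde{w}_n w_l^L]=[\tilde{w}_n,w_l^L\tilde{w}_n]$. The paper reaches the same conclusion by citing \cite{BKPST18} as a black box (their Proposition~2 gives that the set is a Bruhat interval, their Corollary~3 gives the unique length-additive factorization) together with \cite{BT65} for the double-coset description, whereas you unpack the interval statement by hand via the order-preserving projection $W\to W^J$ and the subword property. Your route is more self-contained; the paper's is shorter given the citations. Both arguments hinge on the same structural facts about $\tilde{w}_n$: it is an involution, it lies in $W^J$ since $\tilde{w}_n\alpha_i=\alpha_{n-i}>0$ for $i<n$, and it commutes with $w_l^L$.

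One caution: the ``Deodhar'' statement you quote, namely that $w\leq u_0v_0$ if and only if $u\leq u_0$ \emph{and} $v\leq v_0$, is not correct as a general fact about parabolic factorizations (the ``only if'' can fail on the $v$-component; e.g.\ in $S_3$ with $J=\{s_1\}$, take $u_0=s_2s_1$, $v_0=e$, $w=s_2$). What is true, and what your argument actually uses, is: (a) the projection $w\mapsto u$ to $W^J$ is Bruhat order-preserving, which from $\tilde{w}_n\leq w\leq \tilde{w}_n w_l^L$ forces $u=\tilde{w}_n$; and (b) for $u\in W^J$ and $v,v_0\in W_J$ with $v\leq v_0$, length-additivity plus the subword criterion give $uv\leq uv_0$ and $u\leq uv$, handling the reverse inclusion. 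So the logic is sound; just replace the overstated ``iff'' by these two standard facts when you write it up.
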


\begin{proof}
This proof is similar to \cite[Lemma 4.5]{Zha18}; however, we provide it here for completeness.
By \cite[Proposition 2]{BKPST18}, the set $$D=\{w\in W(\SO_{2l}) \, | \, C(w) \subseteq P_{n, 1^{l-n}} \tilde{w}_n P_{n, 1^{l-n}}\}
$$ is a Bruhat interval $[w_{\mathrm{min}},w_{\mathrm{max}}].$ The simple roots of $L_{n, 1^{l-n}}$ are precisely $\Delta_{L_{n, 1^{l-n}}}=\{\alpha_1,\dots,\alpha_{n-1}\}.$ Let $W(\Delta_{L_{n, 1^{l-n}}})$ be the Weyl group for $L_{n, 1^{l-n}}.$ Then, by \cite[Corollary 5.20]{BT65}, $D$ is the double coset $W(\Delta_{L_{n, 1^{l-n}}})\tilde{w}_n W(\Delta_{L_{n, 1^{l-n}}}).$ We have $\tilde{w}_n=\tilde{w}_n\inv$ and $\tilde{w}_n(\Delta_{L_{n, 1^{l-n}}})>0.$ From \cite[Proposition 2(2)]{BKPST18}, we have $w_{\mathrm{min}}=\tilde{w}_n.$ 

By \cite[Corollary 3]{BKPST18}, every $w\in W(\Delta_{L_{n, 1^{l-n}}})\tilde{w}_n W(\Delta_{L_{n, 1^{l-n}}})$ can be written uniquely as $w=\tilde{w}_n w'$ where $w'\in W(\Delta_{L_{n, 1^{l-n}}}).$ Furthermore, $l(w)=l(\tilde{w}_n)+l(w')$ where $l$ denotes the length of the Weyl element. Hence, the maximal such $w$ is $w_{\mathrm{max}}=\tilde{w}_n w_l^{L_{n, 1^{l-n}}}= w_l^{L_{n, 1^{l-n}}} \tilde{w}_n.$
\end{proof}

The next theorem gives the analogues of \cite[Lemmas 5.13 and 5.14]{CST17}. Similar to the cases considered in \cite{HL22b, Zha18, Zha19}, the proof generalizes to our setting.

\begin{thm}[Cogdell-Shahidi-Tsai]\label{CST}
\begin{enumerate}
\item[]
    \item Let $w\in \mathrm{B}(\SO_{2l})$ and $f\in C_c^\infty(\Omega_w,\omega).$ Suppose $B_m(tw,f)=0$ for any $t\in T_{\SO_{2l}}$. Then, there exists $f'\in C_c^\infty(\Omega_w\backslash C(w),\omega)$, depending only on $f$, such that for $m$ large enough and also only depending on $f$, $B_m(g,f)=B_m(g,f')$ for any $g\in\SO_{2l}.$
    \item Let $w\in \mathrm{B}(\SO_{2l})$ and $\Omega_{w,0},\Omega_{w,1}\subseteq\Omega_{w}$ be two open subsets which are $U_{\SO_{2l}}\times U_{\SO_{2l}}$ and $T_{\SO_{2l}}$-invariant such that $\Omega_{w,0}\subseteq\Omega_{w,1}$ and $\Omega_{w,1}\backslash\Omega_{w,0}$ is a union of Bruhat cells $C(w')$ where $w'\notin \mathrm{B}(\SO_{2l}).$ Then, for any $f_1\in C_c^\infty(\Omega_{w,1},\omega),$ there exists $f'\in C_c^\infty(\Omega_{w,0},\omega)$ such that for $m$ sufficiently large and depending only on $f$, $B_m(g,f_1)=B_m(g,f')$ for any $g\in\SO_{2l}.$
\end{enumerate}
\end{thm}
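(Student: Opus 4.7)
The plan is to reduce both parts to localized analyses near individual Bruhat cells, using the fact that $C(w)$ is closed in $\Omega_w$ (which follows from Proposition \ref{Omega_w}(2) applied to the standard Borel subgroup). Part (2) will be an inductive consequence of Part (1), so the main content lies in Part (1).

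For Part (1), I would proceed as follows. Since $C(w)$ is closed in $\Omega_w$ and $f$ is compactly supported, I would fix a partition of unity to write $f = f^{near} + f^{far}$ where $f^{far}\in C_c^\infty(\Omega_w\setminus C(w),\omega)$ and $f^{near}$ is supported in an arbitrarily thin open neighborhood of $C(w)\cap\supp(f)$. It then suffices to prove that $B_m(g,f^{near})=0$ for all $g\in\SO_{2l}$ once $m$ is sufficiently large, taking $f'=f^{far}$. For this, invoke the Bruhat factorization $C(w) = U_{\SO_{2l}} \cdot T_{\SO_{2l}} w \cdot U^w$, where $U^w := U_{\SO_{2l}} \cap w\inv \overline{U}_{\SO_{2l}} w$ parametrizes the transverse directions. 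Using the transformation property (\ref{Besseleqn}), vanishing of $B_m$ on $C(w)$ reduces to the values $B_m(twu',f^{near})$ with $t\in T_{\SO_{2l}}$ and $u'\in U^w$. Compact support of $f^{near}$ confines the relevant $u'$ to a compact set $K\subset U^w$, and for $m$ large enough that $K\subset H_m$, the transformation property gives $B_m(twu',f^{near})=\psi_m(u')B_m(tw,f^{near})=\psi_m(u')B_m(tw,f)=0$, where the middle equality uses that $f^{far}$ vanishes on all of $C(w)$, so $B_m(tw,f^{far})=0$.

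For Part (2), I would argue by downward induction on Bruhat cells. The support of $f_1$ meets only finitely many Bruhat cells $C(w')$ outside $\Omega_{w,0}$, and each such $w'$ lies outside $\mathrm{B}(\SO_{2l})$. By \cite[Proposition 8.6]{CPSSh05}, any partial Bessel function vanishes on $C(w')$ for such $w'$, so $B_m(tw',f_1)=0$ for every $t\in T_{\SO_{2l}}$. At each inductive step, select a maximal such $w'$ (under the Bruhat order), apply Part (1) with $w$ replaced by $w'$ and the current function restricted to an open subset of $\Omega_{w'}$, and replace the current function by the resulting one whose support omits $C(w')$ while preserving $B_m(g,\cdot)$. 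After finitely many iterations one arrives at $f'\in C_c^\infty(\Omega_{w,0},\omega)$.

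The principal obstacle is the uniformity of $m$ in Part (1): the integer $m$ must depend only on $f$, not on the argument $g$. This forces precise control over how the compact transverse set $K\subset U^w$, determined by $\supp(f)$ via Bruhat coordinates, is eventually absorbed into $H_m$. In the quasi-split non-split setting, the root subgroup $U_{\alpha_{l-1}}$ is not isomorphic to $F$ (owing to the twist by $\rho$), so the explicit parametrization of $H_m\cap U^w$ requires the alternative generators $\mathbf{x}_{\alpha_{l-1}}(x)$ introduced in \S\ref{Howe vectors}; otherwise the needed uniform bounds follow from continuity of the Bruhat coordinates together with compactness of $\supp(f)$, paralleling the arguments in \cite{HL22b, Zha18, Zha19}.
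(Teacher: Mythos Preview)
The paper does not actually prove this theorem; it merely cites \cite[Lemmas 5.13 and 5.14]{CST17} and asserts that the argument carries over, as it did in \cite{HL22b, Zha18, Zha19}. Your proposal goes further and attempts to sketch the underlying argument, which is reasonable, but Part (1) as written has a genuine gap.

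You reduce to showing $B_m(g,f^{near})=0$ for \emph{all} $g\in\SO_{2l}$, yet your argument only treats $g\in C(w)$. The problem is that any open neighborhood of $C(w)$ in $\Omega_w$ necessarily meets higher Bruhat cells $C(w')$ with $w'>w$, since $C(w)\subseteq\overline{C(w')}$. For such $g\in C(w')$, the integral defining $B_m(g,f^{near})$ samples $f^{near}$ only along $U_{\SO_{2l}}\,g\,U_m\subseteq C(w')$, and there is no reason for $f^{near}|_{C(w')}$ to integrate to zero; the vanishing hypothesis $B_m(tw,f)=0$ says nothing about these cells. So taking $f'=f^{far}$ does not give $B_m(g,f)=B_m(g,f')$ on all of $\SO_{2l}$. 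Note also that one cannot simply shrink the neighborhood to $C(w)$ itself, since in the $p$-adic topology no nonzero locally constant function on $\SO_{2l}$ can be supported on the locally closed (but not open) set $C(w)$.

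The actual mechanism in \cite{CST17} is different: one does not aim to kill $f$ near $C(w)$ but rather to \emph{replace} $f$ by a function whose restriction to each higher cell $C(w')$ is unchanged while its restriction to $C(w)$ becomes irrelevant for $B_m$. Concretely, one exploits the product decomposition of an open set containing $C(w)$ in coordinates adapted to the big cell of the Levi $M_w$ (using $C(w)$ closed in $\Omega_w$), and observes that after right-averaging over $U_m$ (which leaves $B_m$ invariant) the resulting function becomes locally constant in the transverse direction, so that one may smoothly excise the contribution of $C(w)$. The vanishing hypothesis $B_m(tw,f)=0$ is exactly what ensures this excision does not alter $B_m$ on $C(w)$ either. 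Your sketch of Part (2) via downward induction on the finitely many cells outside $\mathrm{B}(\SO_{2l})$ is correct in spirit and matches the structure of \cite[Lemma 5.14]{CST17}.
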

The following corollary is an analogue of \cite[Proposition 5.3]{CST17}.

\begin{cor}\label{C(0)}
Let $f,f'\in C_c^\infty(\SO_{2l},\omega)$ such that $W^f(I_{2l})=W^{f'}(I_{2l})=1.$ Then, for any $1\leq n \leq l-1$ there exists $f_{\tilde{w}_n}\in C_c^\infty(\Omega_{\tilde{w}_n},\omega)$, depending both on $f$ and $f'$,
such that for $m$ large enough 
$$
B_m(g,f)-B_m(g,f')=\sum_{n=1}^{l-1} B_m(g,f_{\tilde{w}_n}).
$$
\end{cor}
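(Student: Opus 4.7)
The strategy, following \cite[Proposition 5.3]{CST17} and its analogues in \cite{HL22b, Jo22, Zha18, Zha19}, is to apply Theorem \ref{CST} inductively along the Bruhat order. Set $f_0 := f - f' \in C_c^\infty(\SO_{2l}, \omega)$. By linearity of $f \mapsto B_m(\cdot, f)$, we have $B_m(g, f_0) = B_m(g, f) - B_m(g, f')$, so it suffices to express $B_m(g, f_0)$ as $\sum_{n=1}^{l-1} B_m(g, f_{\tilde{w}_n})$ with $f_{\tilde{w}_n} \in C_c^\infty(\Omega_{\tilde{w}_n}, \omega)$. By the normalization hypothesis, $W^{f_0}(I_{2l}) = 0$ and hence $B_m(I_{2l}, f_0) = 0$ for any $m$ sufficiently large (depending on $f$ and $f'$).

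The key step is to move the support off the closed cell $C(I_{2l}) = B_{\SO_{2l}}$ via Part (1) of Theorem \ref{CST} at $w = I_{2l}$. The hypothesis requires $B_m(t, f_0) = 0$ for every $t \in T_{\SO_{2l}}$, not merely at $t = I_{2l}$. Using the identity $W^{f_0}(tu) = \psi(tut^{-1}) W^{f_0}(t)$, which follows from $tu = (tut^{-1}) \cdot t$ with $tut^{-1} \in U_{\SO_{2l}}$, one obtains
$$B_m(t, f_0) = W^{f_0}(t) \cdot \frac{1}{\mathrm{Vol}(U_m)}\int_{U_m} \psi_m^{-1}(u)\psi(tut^{-1})\,du,$$
so the integral equals $\mathrm{Vol}(U_m)$ or $0$ according as the character $u \mapsto \psi_m^{-1}(u)\psi(tut^{-1})$ is trivial or nontrivial on $U_m$. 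For $m$ large relative to the support of $f_0$, the torus elements $t$ making this character trivial are confined to a small neighborhood of $Z$, on which $W^{f_0}$ vanishes by local constancy and the normalization $W^{f_0}(I_{2l}) = 0$; for all other $t$, the integral already vanishes. Part (1) of Theorem \ref{CST} then produces $\tilde{f}_0 \in C_c^\infty(\SO_{2l} \setminus C(I_{2l}), \omega)$ with $B_m(g, \tilde{f}_0) = B_m(g, f_0)$ for all $g$.

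It remains to split $\tilde{f}_0$ across the open sets $\Omega_{\tilde{w}_n}$. By Corollary \ref{Besselpartnonsplit} together with Lemma \ref{CPSSh}, every Bessel Weyl element $w > I_{2l}$ satisfies $\theta_w \subseteq \Delta(\SO_{2l}) \setminus \{\alpha_n\} = \theta_{\tilde{w}_n}$ for some $n$, hence $w \geq \tilde{w}_n$ and $C(w) \subseteq \Omega_{\tilde{w}_n}$; thus the Bessel cells of $\SO_{2l} \setminus C(I_{2l})$ are covered by $\bigcup_{n=1}^{l-1}\Omega_{\tilde{w}_n}$. Applying Part (2) of Theorem \ref{CST} iteratively absorbs the remaining non-Bessel cells into this cover, after which a smooth, $Z$-invariant partition of unity on $\supp(\tilde{f}_0)$ subordinate to $\{\Omega_{\tilde{w}_n}\}_n$ produces functions $f_{\tilde{w}_n} \in C_c^\infty(\Omega_{\tilde{w}_n}, \omega)$ with $\sum_n f_{\tilde{w}_n} = \tilde{f}_0$. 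Taking partial Bessel functions of both sides yields the claimed identity.

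The main obstacle is verifying the torus vanishing $B_m(t, f_0) = 0$ for every $t \in T_{\SO_{2l}}$ in the second step; the interplay between the compact support of $f_0$, the level $m$ of $\psi_m$, and the torus action through the character $u \mapsto \psi_m^{-1}(u)\psi(tut^{-1})$ must be controlled carefully, in the spirit of the foundational treatments of Howe vectors in \cite{Bar95, CST17}. Once this vanishing is secured, the remainder of the argument is a direct bookkeeping application of Theorem \ref{CST} together with the Bessel support structure established in \S\ref{Besselsupp}.
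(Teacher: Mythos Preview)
Your proposal is correct and follows precisely the approach the paper has in mind; the paper omits the proof entirely, citing it as an adaptation of \cite[Corollary 4.7]{Zha18}, and your sketch is exactly that adaptation. One small remark specific to the quasi-split non-split setting: the torus $T_{\SO_{2l}}$ carries a compact anisotropic factor, so when verifying that triviality of the character $u\mapsto\psi_m^{-1}(u)\psi(tut^{-1})$ on $U_m$ forces $t$ near the identity, you must also constrain this factor. This follows because conjugation by the anisotropic part acts nontrivially on the two-dimensional root space $U_{\alpha_{l-1}}$, and the generic character $\psi$ (through the entry $u_{l-1,l+1}$, and via the mixing with $u_{l-1,l}$ under that conjugation) detects this action; triviality on $U_{\alpha_{l-1},m}$ then forces both $a-1$ and $b$ into $\mathfrak p^m$. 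With this observation your argument goes through unchanged. As a cosmetic point, a single application of Theorem~\ref{CST}(2) with $\Omega_{w,1}=\SO_{2l}\setminus C(I_{2l})$ and $\Omega_{w,0}=\bigcup_{n=1}^{l-1}\Omega_{\tilde w_n}$ suffices for the final step, since by Lemma~\ref{CPSSh} every Bessel element $w>I_{2l}$ already lies in some $\Omega_{\tilde w_n}$, so the difference $\Omega_{w,1}\setminus\Omega_{w,0}$ consists entirely of non-Bessel cells.
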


\begin{proof}
The proof is an adaptation of \cite[Corollary 4.7]{Zha18} which we omit. 
\end{proof}

\subsection{Outline of the proof of Theorem \ref{converse thm intro}}\label{outline of the proof}

In this section, we outline the strategy of the proof of Theorem \ref{converse thm intro}. Let $\mathcal{C}(0)$ denote the condition that $\pi$ and $\pi'$ have the same central character $\omega$. We always assume $\mathcal{C}(0).$ For $n\geq 1,$ define the condition $\mathcal{C}(n)$ by $\gamma(s,\pi\times\tau,\psi)=\gamma(s,\pi'\times\tau,\psi)$, as functions of the complex variable $s$, for all irreducible generic representations $\tau$ of $\GL_k$ with $1\leq k \leq n$ along with $\mathcal{C}(0).$ It is clear that $\mathcal{C}(n)$ implies $\mathcal{C}(n-1).$ As $\pi$ is supercuspidal, its matrix coefficients, denoted by $\mathcal{M}(\pi)$, have compact support. That is, $\mathcal{M}(\pi)\subseteq C_c^\infty(\SO_{2l},\omega).$ Since $\pi$ is generic, the map $\mathcal{M}(\pi)\rightarrow C^\infty(\SO_{2l},\omega,\psi)$ given by $f\mapsto W^f$ is nonzero.

Let $f\in\mathcal{M}(\pi)$ and $f'\in\mathcal{M}(\pi')$ such that $W^f(I_{2l})=W^{f'}(I_{2l})=1.$ By assumption, $\pi$ and $\pi'$  satisfy $\mathcal{C}(0).$ By Corollary \ref{C(0)}, there exists $f_{\tilde{w}_n}\in C_c^\infty(\Omega_{\tilde{w}_n},\omega)$ for $1\leq n \leq l-1$  such that for $m$ large enough 
\begin{equation}\label{Bessel Difference}
B_m(g,f)-B_m(g,f')=\sum_{n=1}^{l-1} B_m(g,f_{\tilde{w}_n}).
\end{equation}

Similar to \cite{HL22b, Zha18, Zha19}, we show that the condition $\mathcal{C}(k)$ implies
$$
B_m(g,f)-B_m(g,f')=\sum_{n=k+1}^{l-1} B_m(g,f_{\tilde{w}_n}).
$$
for $k\leq l-1$ (Theorem \ref{l-2 theorem}). Thus $\mathcal{C}(l-2)$ gives
$$
B_m(g,f)-B_m(g,f')= B_m(g,f_{\tilde{w}_{l-1}}).
$$
However, unlike the symplectic and unitary cases,  $\mathcal{C}(l-1)$ does not imply that we can remove $B_m(g,f_{\tilde{w}_{l-1}}).$ Instead, it only shows that $B_m(\cdot,f_{\tilde{w}_{l-1}})=0$ on elements which are fixed by the outer conjugation by $c$ (Theorem \ref{l-1 thm}). For those elements which are not fixed by $c,$ we use the condition $\mathcal{C}(l).$ It determines that $B_m(\cdot,f_{\tilde{w}_{l-1}})+B_m^c(\cdot,f_{\tilde{w}_{l-1}})=0$ on all the elements which are not fixed by the outer conjugation (Theorem \ref{l thm}). As a result, we must consider the partial Bessel functions for $\pi^c$ and $\pi'^c.$ Consequently, we deduce Theorem \ref{converse thm intro} from the uniqueness of Whittaker models.

\section{Sections of induced representations}\label{sections}
Let $n\leq l$ and $(\tau, V_\tau)$ be an irreducible $\psi\inv$-generic representation of $\GL_n$. In this section, we construct a section in $I(\tau,s,\psi\inv)$ which is used in calculating the zeta integrals. Let $\ol{V}_n$ be the unipotent radical of the opposite parabolic to $Q_n$. That is,
$$
\ol{V}_n=\left\{\left(\begin{matrix}
I_n & & \\
* & 1 & \\
* & * & I_n
\end{matrix}\right)\in\SO_{2n+1}\right\}.
$$
Let $i$ be a positive integer and recall the definition of $H_i\subseteq\SO_{2l}$ in  \S\ref{Howe vectors}. For $n<l,$ set $\ol{V}_{n,i}=w^{l,n}\ol{V}_n(w^{l,n})\inv\cap H_i$ where we realize $\ol{V}_n$ through the embedding of $\SO_{2n+1}$ into $\SO_{2l}.$ 

Recall that for a positive integer $m$, we set $K_m^{\GL_n}$ to be the congruence subgroups of $\GL_n.$ That is, $K_m^{\GL_n}=I_{n}+\mathrm{Mat}_{n\times n}(\mathfrak{p}^m).$ Following \cite[\S 3.2]{Bar95}, we define an open compact subgroup of $\SO_{2l+1}$ by $$H_m^{\SO_{2l+1}}=e'_m(K_m^{\GL_{2l+1}}\cap\SO_{2l+1})(e'_m)\inv$$ where $e'_m=\mathrm{diag}(\varpi^{-2ml},\dots,\varpi^{2ml})\in\SO_{2l+1}.$ If $n=l,$ we set $\ol{V}_{l,i}=\ol{V}_l\cap H_i^{\SO_{2l+1}}.$

Let $D$ be an open, compact subgroup of $V_n$. For $x\in D$, we let 
$$
S(x,i)=\{\ol{y}\in\ol{V}_n \, | \, \ol{y}x\in Q_n \ol{V}_{n,i}\}.
$$ 
We collect several lemmas from \cite{HL22b} below.

\begin{lemma}[{\cite[Lemma 6.1]{HL22b}}]\label{5.1}
For any positive integer $m$, there exists an integer $i_1=i_1(D,m)$ such that for any $i\geq i_1, x\in D,$ and $\ol{y}\in S(x,i)$ we have 
$$
\ol{y}x=vl_n(a)\ol{y}_0
$$ where $v\in V_n, a\in K_m^{\GL_n}$, and $\ol{y}_0\in\ol{V}_{n,i}.$
\end{lemma}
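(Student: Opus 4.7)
The plan is to work with the open Bruhat cell $\Omega := V_n L_n \ol{V}_n \subseteq \SO_{2n+1}$ for the Siegel parabolic $Q_n$ and its opposite. Multiplication gives an isomorphism of algebraic varieties $V_n \times L_n \times \ol{V}_n \cong \Omega$, so every element $g \in \Omega$ has a unique factorization $g = v \cdot l_n(a) \cdot \ol{y}_0$ with $v, l_n(a), \ol{y}_0$ depending rationally (and hence $p$-adically continuously) on $g$. The hypothesis $\ol{y}x \in Q_n \ol{V}_{n,i} \subseteq \Omega$ says precisely that the desired factorization exists; the content of the lemma is that the $\GL_n$-component $a$ must lie in $K_m^{\GL_n}$ once $i$ is large enough relative to $D$ and $m$.

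The core step is to show that $\ol{y}$ itself is forced close to $I_{2n+1}$, uniformly in $x \in D$, once $i$ grows. I would pass to the quotient $\pi \colon \SO_{2n+1} \to Q_n \backslash \SO_{2n+1}$: the composition $\ol{V}_n \hookrightarrow \SO_{2n+1} \xrightarrow{\pi} Q_n \backslash \SO_{2n+1}$ is an open immersion onto $\pi(\Omega)$. The factorization yields $\pi(\ol{y}) = \pi(\ol{y}_0 x^{-1})$, and as $i \to \infty$ the set $\ol{V}_{n,i}$ shrinks to $\{I_{2n+1}\}$, so $\ol{y}_0 x^{-1} \to x^{-1} \in Q_n$ uniformly for $x \in D$ by compactness. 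Inverting $\pi|_{\ol{V}_n}$ locally then traps $\ol{y}$ in any prescribed $p$-adic neighborhood of $I_{2n+1}$ in $\ol{V}_n$, uniformly in $x \in D$, once $i$ is large. Combined with the continuity of the factorization map on $\Omega$ --- noting that at $(\ol{y}, x) = (I_{2n+1}, x)$ with $x \in V_n$ the factorization is $(v, l_n(a), \ol{y}_0) = (x, I_n, I_{2n+1})$ --- one concludes $a \to I_n$ uniformly in $x \in D$, so a suitable $i_1 = i_1(D, m)$ forces $a \in K_m^{\GL_n}$.

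The main obstacle will be securing the uniformity in $x \in D$ at both stages, which is ultimately a consequence of compactness of $D$ together with the algebraic (and in particular $p$-adically locally constant on bounded sets) nature of all the maps involved. An alternative, more concrete approach --- paralleling the computations of \cite{Bar95, HL22b} --- is to express the factorization in block form: by writing $\ol{y} x = v l_n(a) \ol{y}_0$ out explicitly in the $n+1+n$ block decomposition of $\SO_{2n+1}$, the $\GL_n$-component $a$ can be read off as a rational function of the entries of $\ol{y}$, $x$, and $\ol{y}_0$, yielding an explicit bound of the form $\|a - I_n\| \leq C(D)\,|\varpi|^{\,i}$ for a suitable norm, which produces $i_1 = i_1(D,m)$ directly without invoking the $p$-adic implicit function theorem.
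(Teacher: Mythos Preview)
The paper does not give its own proof of this lemma: it is quoted verbatim as \cite[Lemma 6.1]{HL22b} and used as a black box, so there is no in-paper argument to compare against.

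That said, your sketch is sound. The decomposition $\ol{y}x = v\,l_n(a)\,\ol{y}_0$ with $\ol{y}_0 \in \ol{V}_{n,i}$ is indeed guaranteed by the very definition of $S(x,i)$, so the only content is forcing $a \in K_m^{\GL_n}$. Your route --- project to $Q_n\backslash \SO_{2n+1}$, use that $\ol{V}_{n,i}\to\{I\}$ as $i\to\infty$ so that $Q_n\ol{y}=Q_n\ol{y}_0 x^{-1}\to Q_n$ uniformly over the compact set $D$, then invert the open immersion $\ol{V}_n\hookrightarrow Q_n\backslash\SO_{2n+1}$ to trap $\ol{y}$ near the identity, and finally invoke continuity of the Levi component of the big-cell factorization --- is a clean conceptual argument and correct as stated. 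The only point worth making explicit in a full write-up is that $\ol{V}_{n,i}$ really does shrink to the identity: in the definition $\ol{V}_{n,i}=w^{l,n}\ol{V}_n(w^{l,n})^{-1}\cap H_i$ (or $\ol{V}_l\cap H_i^{\SO_{2l+1}}$ when $n=l$), the subgroup $H_i$ imposes conditions of the form $\mathfrak{p}^{i+c}$ on entries below the diagonal, with $c>0$, so the intersection is contained in an arbitrarily small neighbourhood of $I$ once $i$ is large.

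Your alternative block-matrix computation is closer in spirit to how such lemmas are typically proved in \cite{Bar95} and \cite{HL22b}: one writes out the $n\times 1\times n$ block form of $\ol{y}x$ and of $v\,l_n(a)\,\ol{y}_0$, solves for $a$ as an explicit polynomial in the entries, and reads off a bound. Either approach suffices; yours has the advantage of being coordinate-free and making the role of compactness of $D$ transparent.
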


\begin{lemma}[{\cite[Lemma 6.2]{HL22b}}]
There exists an integer $i_2=i_2(D)$ such that $S(x,i)=\ol{V}_{n,i}$ for any $x\in D$ and $i\geq i_2.$
\end{lemma}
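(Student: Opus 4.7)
The plan is to prove both containments $\ol{V}_{n,i} \subseteq S(x,i)$ and $S(x,i) \subseteq \ol{V}_{n,i}$ using the local structure of the Bruhat decomposition at the identity, together with the compactness of $D$ to obtain uniform estimates in $x$.

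First I would observe that for any $\ol{y}\in \ol{V}_n$ and any $x\in V_n \subseteq Q_n$, the product $\ol{y}x$ lies in the open big cell $\ol{V}_n Q_n = Q_n \ol{V}_n$ of $\SO_{2n+1}$. Consequently there exist unique elements $q(\ol{y},x) \in Q_n$ and $\phi(\ol{y},x) \in \ol{V}_n$ with $\ol{y}x = q(\ol{y},x)\,\phi(\ol{y},x)$. The map $(\ol{y},x)\mapsto\phi(\ol{y},x)$ is a morphism of $F$-varieties, hence continuous in the $F$-adic topology, and $\phi(1,x)=1$ for every $x$. Since the $\ol{V}_{n,i}$ form a decreasing neighborhood basis of the identity in $\ol{V}_n$ as $i\to\infty$, and since $D$ is compact, there exists an integer $i_2=i_2(D)$ such that, for all $i\geq i_2$ and every $x\in D$, $\phi(\cdot,x)$ sends $\ol{V}_{n,i}$ into itself and the analogous map associated to $x^{-1}\in D^{-1}$ does the same.

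With this continuity estimate in place, the inclusion $\ol{V}_{n,i}\subseteq S(x,i)$ is immediate: for $\ol{y}\in\ol{V}_{n,i}$, the decomposition $\ol{y}x=q(\ol{y},x)\phi(\ol{y},x)$ places $\ol{y}x$ in $Q_n\ol{V}_{n,i}$. For the opposite inclusion, take $\ol{y}\in S(x,i)$, so that $\ol{y}x=q\ol{y}_0$ with $q\in Q_n$ and $\ol{y}_0\in\ol{V}_{n,i}$. Then $\ol{y}=q\ol{y}_0x^{-1}$, and applying the Bruhat decomposition to $\ol{y}_0x^{-1}$ gives $\ol{y}_0x^{-1}=p\,\ol{y}_0''$ with $p\in Q_n$ and $\ol{y}_0''\in\ol{V}_n$; by the uniform continuity statement applied on $D^{-1}$, we have $\ol{y}_0''\in\ol{V}_{n,i}$ as soon as $i\geq i_2$. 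Hence $\ol{y}=(qp)\ol{y}_0''$ is a Bruhat decomposition of $\ol{y}\in\ol{V}_n$; comparing with the trivial decomposition $\ol{y}=1\cdot\ol{y}$ and invoking uniqueness forces $qp=1$ and $\ol{y}=\ol{y}_0''\in\ol{V}_{n,i}$.

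The principal technical point is producing the uniform-in-$x$ estimate on the continuity of $\phi$; this is standard given the algebraicity of $\phi$ and the compactness of $D$, but writing it down cleanly requires an explicit coordinate description of $\phi$ via the Bruhat decomposition. A minor bookkeeping point is that the definition of $\ol{V}_{n,i}$ differs slightly in the two cases $n<l$ and $n=l$ (intersection with $w^{l,n}\ol{V}_n(w^{l,n})^{-1}\cap H_i$ versus intersection with $H_i^{\SO_{2l+1}}$), but in both cases these form a decreasing neighborhood basis of the identity in $\ol{V}_n$, so the same argument applies. I expect no conceptual obstacle beyond those already handled in \cite[Lemma 6.2]{HL22b}; the present setting only requires that the compactness-and-continuity argument be transported from the split to the quasi-split ambient group, which does not affect the local structure near the identity of $\SO_{2n+1}$.
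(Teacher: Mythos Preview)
Your overall strategy is sound and is presumably the same as the argument in \cite[Lemma~6.2]{HL22b}, which the paper simply cites without proof. However, your opening assertion is incorrect: the two open cells $\ol{V}_n Q_n$ and $Q_n\ol{V}_n$ are \emph{not} equal in general. Already for the Borel in $\GL_2$ one has $\ol{N}B=\{g:g_{11}\neq 0\}$ while $B\ol{N}=\{g:g_{22}\neq 0\}$, and the product
\[
\begin{pmatrix}1&0\\a&1\end{pmatrix}\begin{pmatrix}1&b\\0&1\end{pmatrix}=\begin{pmatrix}1&b\\a&ab+1\end{pmatrix}
\]
fails to lie in $B\ol{N}$ precisely when $ab=-1$. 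So the map $\phi(\ol{y},x)$ is only defined on an open subset of $\ol{V}_n\times V_n$, not globally.

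Fortunately your proof never actually uses the global statement. For the inclusion $\ol{V}_{n,i}\subseteq S(x,i)$ you only need $\phi$ defined on $\ol{V}_{n,i}\times D$; since $\{1\}\times D$ maps into $Q_n\subseteq Q_n\ol{V}_n$ and the latter is open, this holds for $i$ large by compactness of $D$. For the reverse inclusion you apply the decomposition to $\ol{y}_0 x^{-1}$ with $\ol{y}_0\in\ol{V}_{n,i}$ already small, so again you are in a neighborhood of $\{1\}\times D^{-1}=\{1\}\times D$ where $\phi$ is defined. Thus the error is cosmetic: simply replace ``for any $\ol{y}\in\ol{V}_n$'' by ``for $\ol{y}$ in a neighborhood of $1$ (depending on $D$)'' and your argument goes through unchanged.

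One further remark: the claim that $\phi(\cdot,x)$ sends $\ol{V}_{n,i}$ into \emph{itself} (same $i$) for all large $i$ is stronger than what bare continuity yields; continuity alone gives only that the image lands in $\ol{V}_{n,j}$ for some $j=j(i)$. You acknowledge this when you say an explicit coordinate description is needed; the point is that the entries of $\phi(\ol{y},x)$ are polynomials in the entries of $\ol{y}$ with coefficients bounded on $D$ and vanishing at $\ol{y}=1$, so once the entries of $\ol{y}$ are sufficiently small the ultrametric inequality gives the same bound back. This is the computation carried out in \cite{HL22b}.
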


For $v\in V_\tau$ define a function $f_s^{i,v}:\SO_{2n+1}\rightarrow V_\tau$ by
$$
f_s^{i,v}(g)=\left\{\begin{array}{cc}
\frac{|\mathrm{det}a|^{s-\frac{1}{2}}\tau(a)v}{W_v(I_n)} & \, \mathrm{if} \, g=u'l_n(a)\ol{u} \, \mathrm{where} \, u'\in V_n, a\in\GL_n, \ol{u}\in \ol{V}_{n,i}, \\
0 & \mathrm{otherwise.}
\end{array}\right.
$$

\begin{lemma}[{\cite[Lemma 6.3]{HL22b}}]\label{5.3}
For any $v\in V_\tau,$ there exists an integer $i_0(v)$ such that for any $i\geq i_0(v)$, $f_s^{i,v}\in I(\tau,s).$
\end{lemma}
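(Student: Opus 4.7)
The plan is to verify the three defining properties of a section of $I(\tau,s)$: well-definedness, the left $Q_n$-transformation law, and right invariance under a compact open subgroup. Well-definedness on the open set $V_n L_n \ol{V}_{n,i} = Q_n \ol{V}_{n,i}$ follows from uniqueness of the decomposition $g = u'l_n(a)\ol{u}$ on the open Bruhat cell, while the formula makes $f_s^{i,v}$ vanish on the complement. The left transformation property is a direct computation: for $b\in\GL_n$ and $u_0\in V_n$,
\[
l_n(b)u_0\,(u'l_n(a)\ol{u})=\bigl(l_n(b)u_0 u'l_n(b)\inv\bigr)\,l_n(ba)\,\ol{u},
\]
which still lies in $Q_n\ol{V}_{n,i}$ with the $\ol{V}_{n,i}$-factor unchanged, so evaluating $f_s^{i,v}$ produces the required $|\mathrm{det}\,b|^{s-\frac{1}{2}}\tau(b)$ factor.

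The substance of the proof is to produce a compact open subgroup $K\subset\SO_{2n+1}$ under which $f_s^{i,v}$ is right invariant. By smoothness of $\tau$, I would choose $m=m(v)$ large enough that $\tau(a)v=v$ for all $a\in K_m^{\GL_n}$, noting that $|\mathrm{det}\,a|=1$ automatically for such $a$. Then fix a compact open subgroup $D$ of $V_n$ and take $i_0(v)\geq\max(i_1(D,m),i_2(D))$, enlarging further if necessary so that $l_n(K_m^{\GL_n})$ normalizes $\ol{V}_{n,i}$ and so that the product $K:=D\cdot l_n(K_m^{\GL_n})\cdot\ol{V}_{n,i}$ forms a compact open subgroup of $\SO_{2n+1}$ with this Iwahori-type decomposition; the existence of such a subgroup for $i$ sufficiently large is standard in the theory of reductive groups over non-Archimedean local fields.

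To check right $K$-invariance, decompose $k=k^+k^L k^-$ accordingly with $k^+\in D$, $k^L=l_n(a_0)$ for some $a_0\in K_m^{\GL_n}$, and $k^-\in\ol{V}_{n,i}$, and compute $gk$ for $g=u'l_n(a)\ol{u}\in Q_n\ol{V}_{n,i}$. Lemma 5.2 ensures $\ol{u}\in S(k^+,i)$, whereupon Lemma \ref{5.1} produces $\ol{u}k^+=v_0\, l_n(b_0)\ol{y}_0$ with $v_0\in V_n$, $b_0\in K_m^{\GL_n}$, and $\ol{y}_0\in\ol{V}_{n,i}$. Conjugating $v_0$ across $l_n(a)$ (which normalizes $V_n$), pushing $k^L$ through $\ol{y}_0$ using the normalization property, and absorbing $k^-$ on the right yields a decomposition $gk=u'''l_n(ab_0 a_0)\ol{y}'$ with $u'''\in V_n$ and $\ol{y}'\in\ol{V}_{n,i}$. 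Since $\tau(b_0 a_0)v=v$ and $|\mathrm{det}(b_0 a_0)|=1$, the definition of $f_s^{i,v}$ gives $f_s^{i,v}(gk)=f_s^{i,v}(g)$, and the same decomposition shows $Q_n\ol{V}_{n,i}$ is stable under right translation by $K$, handling the vanishing case as well. The main obstacle is the careful interlinking of $m$, $D$, and $i$ needed to make Lemma \ref{5.1} and Lemma 5.2 both apply, to guarantee stability of $\ol{V}_{n,i}$ under conjugation by $l_n(K_m^{\GL_n})$, and to ensure that the Iwahori-type product $K$ is genuinely a subgroup; once these bookkeeping conditions are arranged, the invariance check reduces immediately to the two preceding lemmas.
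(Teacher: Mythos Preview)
The paper does not supply its own proof here, instead citing \cite[Lemma 6.3]{HL22b}; your argument---checking well-definedness and the $Q_n$-equivariance directly, then using Lemmas~\ref{5.1} and the unlabeled lemma following it to verify right invariance under a small compact open subgroup---is correct and is precisely the natural approach for which those two lemmas were set up, so it almost certainly coincides with the cited proof. One small imprecision: for arbitrary choices of $D$, $m$, and $i$ the product $D\cdot l_n(K_m^{\GL_n})\cdot\ol{V}_{n,i}$ need not literally be a subgroup, but this is harmless---take $K$ to be any sufficiently deep congruence subgroup of $\SO_{2n+1}$ whose Iwahori factors relative to $Q_n$ are contained in $D$, $l_n(K_m^{\GL_n})$, and $\ol{V}_{n,i}$ respectively, and your computation then goes through verbatim.
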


Recall that we fixed a nonzero Whittaker functional $\Lambda_\tau \in \mathrm{Hom}_{U_{\GL_{n}}}(\tau,\psi^{-1}).$ We define a function $\xi_s^{i,v}:\SO_{2n+1}\times\GL_n\rightarrow\mathbb{C}$ by $\xi_s^{i,v}(g,a)=\Lambda_\tau(\tau(a)f^{i,v}_s(g)).$ Then, $\xi_s^{i,v}\in I(\tau,s,\psi\inv)$ and
\begin{equation}\label{xi nonintertwined}
\xi_s^{i,v}(g, I_n)=\left\{\begin{array}{cc}
\frac{|\mathrm{det}a|^{s-\frac{1}{2}}W_v(a)}{W_v(I_n)} & \, \mathrm{if} \, g=u'l_n(a)\ol{u} \,\, \mathrm{where} \, u'\in V_n, a\in\GL_n, \ol{u}\in \ol{V}_{n,i}, \\
0 & \mathrm{otherwise.}
\end{array}\right.
\end{equation}
Recall that $W_v(a)=\Lambda_\tau(\tau(a)v)$ is the Whittaker function of $\tau$ attached to $v.$

Next we consider the image of this function under the intertwining operator. Let $\tilde{\xi}_{1-s}^{i,v}=M(\tau,s,\psi\inv)\xi_s^{i,v}\in I(\tau^*,1-s,\psi\inv).$ 

\begin{lemma}[{\cite[Lemma 6.4]{HL22b}}]\label{5.4}
Let $D$ be an open compact subset of $V_n.$ Then there exists a positive integer $i(D,v)\geq i_0(v)$ such that for any $i\geq i(D,v)$, $\tilde{\xi}_{1-s}^{i,v}(w_n x, I_n)=\mathrm{Vol}(\ol{V}_{n,i}).$
\end{lemma}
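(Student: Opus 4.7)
The plan is to unfold the definition of the intertwining operator $M(\tau,s,\psi^{-1})$ applied to $\xi^{i,v}_s$ and then use the previous two lemmas to shrink the integration domain from all of $V_n$ down to a set on which the integrand is constant. Explicitly,
$$
\tilde\xi^{i,v}_{1-s}(w_n x, I_n) \;=\; \int_{V_n} \xi^{i,v}_s(w_n u w_n x,\, d_n)\,du.
$$
Since $w_n$ is an involution and conjugation by $w_n$ carries $V_n$ bijectively onto $\bar V_n$ in a Haar-measure-preserving manner, I would perform the substitution $\bar y = w_n u w_n$ to rewrite this as an integral of $\xi^{i,v}_s(\bar y x, d_n)$ over $\bar y \in \bar V_n$.

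Next, the support description (\ref{xi nonintertwined}) shows that the integrand vanishes unless $\bar y x \in Q_n \bar V_{n,i}$, i.e.\ unless $\bar y \in S(x,i)$. For $i \ge i_2(D)$, Lemma \ref{5.1} (the second displayed lemma, with constant $i_2(D)$) collapses $S(x,i)$ to $\bar V_{n,i}$ uniformly in $x \in D$, so the effective integration domain becomes $\bar V_{n,i}$. On this domain, Lemma \ref{5.1} (the first displayed lemma, with parameter $m$) gives a decomposition $\bar y x = v' l_n(a)\bar y_0$ with $v'\in V_n$, $a\in K_m^{\GL_n}$, and $\bar y_0 \in \bar V_{n,i}$. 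Substituting into the explicit formula for $\xi^{i,v}_s$ produces
$$
\xi^{i,v}_s(\bar y x, d_n) \;=\; \frac{|\det a|^{s-\frac12}\,\Lambda_\tau(\tau(d_n a)v)}{W_v(I_n)}.
$$

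The key step is then to choose $m$ large enough that $K_m^{\GL_n}$ acts trivially on $v \in V_\tau$ (possible by smoothness of $\tau$) and so that $|\det a|=1$ for $a\in K_m^{\GL_n}$. This reduces $\tau(d_n a)v$ to $\tau(d_n)v$ and collapses the integrand to a single value independent of $\bar y$ and of $x \in D$; the normalization built into the denominator $W_v(I_n)$ of $f^{i,v}_s$ is precisely what makes this value equal to $1$. Integration over $\bar V_{n,i}$ then yields $\mathrm{Vol}(\bar V_{n,i})$. Setting $i(D,v):=\max\{\,i_0(v),\,i_1(D,m),\,i_2(D)\,\}$ for this choice of $m$ gives the required lower bound on $i$.

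The main obstacle I anticipate is purely one of parameter tracking rather than of ideas: one must first fix $m=m(v)$ so large that $v$ is fixed by $\tau(K_m^{\GL_n})$, then invoke Lemma \ref{5.1} (with this $m$) to obtain $i_1(D,m)$, and simultaneously ensure that $i$ also exceeds both $i_0(v)$ (so $f^{i,v}_s$ is a genuine section by Lemma \ref{5.3}) and $i_2(D)$ (so the support condition collapses $S(x,i)$). Once these choices are coordinated, every factor arising from the decomposition provided by Lemma \ref{5.1} becomes trivial, and the remaining volume factor is exactly $\mathrm{Vol}(\bar V_{n,i})$.
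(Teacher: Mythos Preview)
Your approach is correct and is essentially the standard argument (which the paper cites from \cite{HL22b} without reproducing here): unfold the intertwining integral, substitute $\bar y = w_n u w_n$ to turn the $V_n$-integral into a $\bar V_n$-integral, use the support of $\xi_s^{i,v}$ to restrict to $S(x,i)$, collapse $S(x,i)$ to $\bar V_{n,i}$ via the $i_2(D)$ lemma, and then apply Lemma~\ref{5.1} to evaluate the integrand. Your bookkeeping for $i(D,v)=\max\{i_0(v),i_1(D,m),i_2(D)\}$ with $m$ chosen so that $\tau(K_m^{\GL_n})$ fixes $v$ is exactly right.

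One small correction: after fixing $m$ so that $\tau(K_m^{\GL_n})v=v$ and $|\det a|=1$, the integrand becomes $\Lambda_\tau(\tau(d_n)v)/W_v(I_n)=W_v(d_n)/W_v(I_n)$, which is a nonzero constant independent of $\bar y$ and of $x\in D$, but the denominator $W_v(I_n)$ normalizes $\xi_s^{i,v}(\,\cdot\,,I_n)$, not $\xi_s^{i,v}(\,\cdot\,,d_n)$, so your sentence ``the normalization\ldots is precisely what makes this value equal to $1$'' is not quite the right reason. This constant is exactly $W_v^*(I_n)$ in the notation of~\eqref{xi intertwined} (where $W_v^*(a)=W_v(d_na^*)$), so the lemma as stated either carries an implicit normalization $W_v^*(I_n)=1$ or is a mild imprecision; since only the general formula~\eqref{xi intertwined} is invoked in the subsequent proofs, this has no effect on the applications.
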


The proof of the above lemma also describes the image of $\tilde{\xi}_{1-s}^{i,v}.$ We have
\begin{equation}\label{xi intertwined} \tilde{\xi}_{1-s}^{i,v}(u_1 l_n(a) w_n u_2, I_n)=\left\{\begin{array}{cc}
\mathrm{Vol}(\ol{V}_{n,i})|\mathrm{det}a|^{\frac{1}{2}-s}W_v^*(a) & \, \mathrm{if} \, \, u_1\in V_n, a\in\GL_n, u_2\in D, \\
0 & \mathrm{otherwise.}
\end{array}\right.
\end{equation}

\section{Proof of the local converse theorem}\label{proof of the local converse theorem}

Throughout this section, we continue to let $\pi$ and $\pi'$ be irreducible $\psi$-generic supercuspidal representations of $\SO_{2l}$ with same central character $\omega.$ We also fix matrix coefficients $f\in\mathcal{M}(\pi)$ and $f'\in\mathcal{M}(\pi')$ such that $W^f(I_{2l})=W^{f'}(I_{2l})=1$.

The goal of this section is to prove Theorem \ref{converse thm intro}. For twists up to $\GL_{l-2},$ the arguments are similar to those of \cite{HL22b,Zha18,Zha19}. We obtain that $\mathcal{C}(l-2)$ implies that
$$
B_m(g,f)-B_m(g,f')= B_m(g,f_{\tilde{w}_{l-1}}).$$
To compute the right hand side, we generalize methods of \cite{HL22b}. The twists up to $\GL_{l-1}$ determine the right hand side on certain elements that are fixed by the outer conjugation $c$. This is verified directly through explicit computation of the embedding. To obtain elements which are not fixed by conjugation, we use the twists by $\GL_l$; however, the result instead implies that the right hand side plus its outer conjugation by $c$ vanishes. That is,
$$
B_m(g,f_{\tilde{w}_{l-1}}) +B_m^c(g,f_{\tilde{w}_{l-1}})=0.
$$

\subsection{\texorpdfstring{Twists up to $\GL_{l-2}$}{}}

We consider the twists up to  $\GL_{l-2}.$ Recall from Equation \eqref{Bessel Difference},
$$
B_m(g,f)-B_m(g,f')=\sum_{n=1}^{l-1} B_m(g,f_{\tilde{w}_n}).
$$
In this subsection, we prove the following theorem.

\begin{thm}\label{l-2 theorem}
Suppose that $k\leq l-2.$ If $\pi$ and $\pi'$ satisfy the condition $\mathcal{C}(k)$, then
$$
B_m(g,f)-B_m(g,f')=\sum_{n=k+1}^{l-1}  B_m(g,f_{\tilde{w}_n}).
$$ for any $g\in\SO_{2l}$ and $m$ large enough.
\end{thm}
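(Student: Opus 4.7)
The plan is to proceed by induction on $k$, with the base case $k=0$ being exactly Corollary \ref{C(0)}. For the inductive step, assume the statement holds for $k-1 \leq l-3$, so that
\begin{equation*}
B_m(g,f)-B_m(g,f') = \sum_{n=k}^{l-1} B_m(g,f_{\tilde{w}_n}).
\end{equation*}
Under the assumption $\mathcal{C}(k)$, the goal is to show that the $n=k$ term can be absorbed into the remaining sum. Specifically, I would aim to produce a new datum $f'_{\tilde{w}_k}\in C_c^\infty(\Omega_{\tilde{w}_k}\setminus C(\tilde{w}_k),\omega)$ with the same partial Bessel function as $f_{\tilde{w}_k}$ (for $m$ large), and then invoke Theorem \ref{CST}(2) to redistribute $f'_{\tilde{w}_k}$ across the Bruhat cells $C(\tilde{w}_n)$ for $n>k$ (these are precisely the remaining Bessel-supported cells lying in $\Omega_{\tilde{w}_k}\setminus C(\tilde{w}_k)$ by Corollary \ref{Besselpartnonsplit} and Lemma \ref{CPSSh}).

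To produce such an $f'_{\tilde{w}_k}$, by Theorem \ref{CST}(1) it suffices to show that $B_m(t\tilde{w}_k, f_{\tilde{w}_k}) = 0$ for all $t\in T_{\SO_{2l}}$ and all sufficiently large $m$. For this, I would fix an arbitrary irreducible $\psi^{-1}$-generic representation $\tau$ of $\GL_k$ and test the equality using the section $\xi_s^{i,v}\in I(\tau,s,\psi^{-1})$ from Section \ref{sections}, with $i$ chosen large enough (depending on $m$ and $v$) as in Lemmas \ref{5.1}--\ref{5.4}. The gamma factor equality from $\mathcal{C}(k)$ yields
\begin{equation*}
\Psi\!\left(B_m(\cdot,f)-B_m(\cdot,f'),\,M(\tau,s,\psi^{-1})\xi_s^{i,v}\right) = \gamma(s,\pi\times\tau,\psi)\,\Psi\!\left(B_m(\cdot,f)-B_m(\cdot,f'),\,\xi_s^{i,v}\right),
\end{equation*}
and substituting the inductive decomposition gives an identity among zeta integrals of the partial Bessel functions $B_m(\cdot,f_{\tilde{w}_n})$ for $n\geq k$.

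The main calculation is then to evaluate each $\Psi(B_m(\cdot,f_{\tilde{w}_n}),\xi_s^{i,v})$ and $\Psi(B_m(\cdot,f_{\tilde{w}_n}),\tilde{\xi}_{1-s}^{i,v})$ using the explicit supports given in \eqref{xi nonintertwined} and \eqref{xi intertwined}, together with the fact that $B_m(\cdot,f_{\tilde{w}_n})$ is supported in $\Omega_{\tilde{w}_n}$. Because the non-intertwined section $\xi_s^{i,v}$ is supported on $Q_k\overline{V}_{k,i}$, the only Weyl element in the Bessel support whose Bruhat cell meets this region in a way that contributes a nonzero integral is $\tilde{w}_k$ itself; analogous support considerations govern the intertwined side via the Weyl element $\tilde{w}_n$ with $n\geq k$, and the higher cells ($n>k$) will either vanish on this side or match on both sides. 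After carrying through the Bruhat decomposition along $\tilde{w}_k$ with the quasi-split torus $T_{\SO_{2l}}$ (using the embeddings of \S\ref{gps and reps} and the identity $w^{l,k} w_k (w^{l,k})^{-1} = \tilde{w}_k$), the resulting identity takes the form of an integral transform of $t\mapsto B_m(t\tilde{w}_k,f_{\tilde{w}_k})$ against the Whittaker function $W_v$ of $\tau$, evaluated on a torus variable. Varying $v\in V_\tau$ and varying $\tau$ over all irreducible generic representations of $\GL_k$, a standard density/uniqueness-of-Whittaker-models argument (as in \cite{Zha18,Zha19,HL22b}) forces $B_m(t\tilde{w}_k,f_{\tilde{w}_k})=0$ for all $t\in T_{\SO_{2l}}$.

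The hardest step will be the explicit zeta-integral calculation, since the quasi-split (non-split) torus and the embeddings of Siegel-type subgroups do not simplify as cleanly as in the split or unitary settings. In particular, one must carefully control how $R^{l,k}$, $N^{l-k}$, and the character $\psi$ interact with the torus elements and with the partial-Bessel transformation law \eqref{Besseleqn}; the quasi-split root subgroup $U_{\alpha_{l-1}}$ parametrized by $\mathbf{x}_{\alpha_{l-1}}(x)$ is particularly delicate. Once the integral is reduced to a Mellin-type pairing against $W_v$ on a split subtorus, the density argument and the subsequent applications of Theorem \ref{CST}(1) and \ref{CST}(2) complete the induction.
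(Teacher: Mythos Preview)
Your overall strategy---induction on $k$, zeta-integral computation with the sections of \S\ref{sections}, then the Cogdell--Shahidi--Tsai machinery---is correct and is what the paper does. But there is a genuine gap in your redistribution step. You propose to apply Theorem \ref{CST}(1) once at $\tilde{w}_k$, obtaining $f'_{\tilde{w}_k}\in C_c^\infty(\Omega_{\tilde{w}_k}\setminus C(\tilde{w}_k),\omega)$, and then invoke Theorem \ref{CST}(2) to push into $\bigcup_{n>k}\Omega_{\tilde{w}_n}$. Your justification is that ``these are precisely the remaining Bessel-supported cells in $\Omega_{\tilde{w}_k}\setminus C(\tilde{w}_k)$,'' but this is false for $k\geq 2$: by Proposition \ref{Besselprtnon} the set $\mathrm{B}_k(\SO_{2l})$ has $2^{k-1}$ elements, all of which lie in $\Omega_{\tilde{w}_k}$, and all but $\tilde{w}_k$ remain after deleting $C(\tilde{w}_k)$. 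Since Theorem \ref{CST}(2) only lets you discard \emph{non}-Bessel cells, it cannot jump past these. The paper handles this by observing that for every $w\in\mathrm{B}_k(\SO_{2l})$ and $t\in T_w$ one has $tw=t_k(b)\tilde{w}_k$ for some $b\in\GL_k$ (write $w=t_{l-1}(w')\tilde{w}_k$ with $w'\in W(\GL_k)$ and absorb $w'$ into $b$), so the single vanishing $B_m(t_k(a)\tilde{w}_k,f_{\tilde{w}_k})=0$ for all $a\in\GL_k$ (Proposition \ref{l-2 prop}) already kills $B_m$ on \emph{all} of $\mathrm{B}_k$. One then iterates Theorem \ref{CST} through the Bruhat interval $[\tilde{w}_k,\,w_l^{L_{k,1^{l-k}}}\tilde{w}_k]$ (Lemma \ref{P_{k, 1^{l-k}}}) and only afterwards applies a partition-of-unity argument to land in $\bigcup_{n>k}\Omega_{\tilde{w}_n}$.

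A secondary imprecision: your account of the non-intertwined side is inverted. After unwinding the embedding and the $R^{l,k}$-integration, the support of $\xi_s^{i,v}$ forces the Whittaker function to be evaluated at elements of the form $t_{l-1}(A)$ in the Levi $M_{l-1}$; these lie in no $\Omega_{\tilde{w}_n}$, so \emph{every} term of the inductive decomposition vanishes there (equivalently, Lemma \ref{6.3}(1) gives $W_m^f=W_m^{f'}$ directly). It is on the \emph{intertwined} side that the argument lands in $P_{k,1^{l-k}}\tilde{w}_kP_{k,1^{l-k}}$ and Lemma \ref{6.3}(2),(3) isolate the $f_{\tilde{w}_k}$ contribution. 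Finally, for $k\leq l-2$ the anisotropic part of $T_{\SO_{2l}}$ plays no role---the relevant torus variable is $t_k(a)$ with $a\in\GL_k$---so the quasi-split subtleties you anticipate do not actually arise until $k=l-1$ and $k=l$.
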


The proof is by induction. The case $k=0$ is already done by Corollary \ref{C(0)}. Next, we assume inductive hypothesis. That is, we assume that the condition $\mathcal{C}(k-1)$ implies
$$
B_m(g,f)-B_m(g,f')=\sum_{n=k}^{l-1} B_m(g,f_{\tilde{w}_n}) 
$$ for any $g\in\SO_{2l}$ and $m$ large enough. We will show that the condition $\mathcal{C}(k)$ implies
$$
B_m(g,f)-B_m(g,f')=\sum_{n=k+1}^{l-1}  B_m(g,f_{\tilde{w}_n}).
$$

We begin with some preliminaries.
For $k\leq l-1,$ let $P_k$ be the standard maximal parabolic subgroup of $\SO_{2l}$ with Levi subgroup $M_k$ isomorphic to $\GL_k\times\SO_{2(l-k)}.$ Let $N_k$ be its unipotent radical. Furthermore, we let
$$
S^+_k=\left\{\left(\begin{matrix}
u_1 & &  \\
    & u_2 & \\
    &   & u_1^*
\end{matrix}\right) \, \left| \, u_1\in U_{\GL_k}, u_2\in U_{\SO_{2(l-k)}}\right\}\right. .
$$
Note that $U_{\SO_{2l}}=N_k S_k^+=S_k^+N_k .$ The product map gives an isomorphism
$$
P_k \times \{\tilde{w}_k\}\times N_k \rightarrow P_k \tilde{w}_k P_k.
$$
Alternatively, one can check that conjugation by $\tilde{w}_k$ takes $P_k$ to its opposite parabolic.

\begin{lemma}\label{6.2}
For $u_0\in N_k\backslash(N_k\cap U_m)$, $u^+\in S_k^+\cap U_m$, we have that $(u^+)\inv u_0 u^+\in N_k\backslash(N_k\cap U_m).$
\end{lemma}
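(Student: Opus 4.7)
The plan is to reduce this to two simple structural observations about $N_k$ and $U_m$, without doing any matrix computation. The first observation is that $S_k^+ \subseteq M_k \subseteq P_k$, since elements of $S_k^+$ are block-diagonal of the form $\mathrm{diag}(u_1, u_2, u_1^*)$ with $u_1 \in U_{\GL_k}$ and $u_2 \in U_{\SO_{2(l-k)}}$. Because $N_k$ is the unipotent radical of the parabolic $P_k$, every element of $P_k$ normalizes $N_k$; in particular, $(u^+)\inv u_0 u^+ \in N_k$ for every $u_0 \in N_k$ and every $u^+ \in S_k^+$. This handles the ``lies in $N_k$'' half of the conclusion and makes no use of $u^+$ being in $U_m$.

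The second and main observation is that $U_m = U_{\SO_{2l}} \cap H_m$ is itself a subgroup of $\SO_{2l}$. Indeed, $K_m^{\GL_{2l}} = I_{2l} + \mathrm{Mat}_{2l \times 2l}(\mathfrak{p}^m)$ is closed under multiplication and inversion, so $K_m = K_m^{\GL_{2l}} \cap \SO_{2l}$ is a subgroup, and conjugation by the torus element $e_m$ preserves the group structure, giving that $H_m$ is a subgroup; intersecting with the subgroup $U_{\SO_{2l}}$ yields that $U_m$ is a subgroup. Granted this, I would argue by contradiction. Suppose, toward contradiction, that $(u^+)\inv u_0 u^+ \in U_m$. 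Since $u^+ \in S_k^+ \cap U_m \subseteq U_m$ and $U_m$ is a subgroup, closure under multiplication and inversion would give
\[
u_0 \;=\; u^+ \bigl((u^+)\inv u_0 u^+\bigr) (u^+)\inv \;\in\; U_m,
\]
contradicting the hypothesis $u_0 \in N_k \setminus (N_k \cap U_m)$, which (since $u_0$ already lies in $N_k$) says precisely that $u_0 \notin U_m$.

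I do not anticipate any real obstacle here; the lemma is essentially a reminder that $U_m$ is a group and that $S_k^+$ sits inside the normalizer of $N_k$, so conjugation by $S_k^+ \cap U_m$ cannot move an element out of $N_k$ into $U_m$. The only preliminary point requiring care is the verification that $U_m$ is closed under multiplication and inversion, but this is immediate from the congruence-subgroup description of $K_m$ recalled above. No explicit root-subgroup computation with the elements $\mathbf{x}_{\alpha}(x)$ is needed for this particular lemma.
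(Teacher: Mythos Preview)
Your argument is correct and is essentially the natural one: $S_k^+$ normalizes $N_k$ because it sits in the Levi $M_k$, and $U_m = U_{\SO_{2l}}\cap H_m$ is a subgroup, so conjugation by an element of $U_m$ cannot land an element outside $U_m$ inside $U_m$. The paper itself does not spell out a proof but simply refers to \cite[Lemma 6.2]{Zha18}; your write-up is a clean self-contained version of that standard argument.
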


\begin{proof}
The proof is similar to \cite[Lemma 6.2]{Zha18}.
\end{proof}

Recall that, for $n\leq l-1,$ we set $P_{n, 1^{l-n}}$ to be the standard parabolic subgroup with Levi subgroup $L_{n,1^{l-n}}\cong \GL_n\times\GL_1^{l-n-1}\times\SO_2.$ Let $m$ be a positive integer and recall the definition of $U_m$ from $\S$\ref{Howe vectors}. For $a\in\GL_n$, we let
$$
t_n(a)=\mathrm{diag}(a,I_{2(l-n)},a^*)\in\SO_{2l}.
$$

\begin{lemma}\label{6.3}  \
\begin{enumerate}
    \item For $m$ large enough and depending only on $f$ and $f'$,
    $$
    B_m(t_{l-1}(a),f)-B_m(t_{l-1}(a),f')=0,
    $$ for any $a\in\GL_{l-1}.$
    \item Suppose that $k\leq l-1.$ For any $i\geq k+1$, $P_{k, 1^{l-k}}\tilde{w}_k P_{k, 1^{l-k}}\cap\Omega_{\tilde{w}_i}=\emptyset.$ Thus, 
    $$
    B_m(g,f_{\tilde{w}_i})=0,
    $$
    for any $g\in P_{k, 1^{l-k}}\tilde{w}_k P_{k, 1^{l-k}}$ and $i\geq k+1.$ Hence
    $$
     B_m(g,f)-B_m(g,f')=B_m(g,f_{\tilde{w}_k}),
    $$
    for any $g\in P_{k, 1^{l-k}}\tilde{w}_k P_{k, 1^{l-k}}$ and $m$ large enough.
    \item Let $k\leq l-1.$ For $m$ large enough depending only on $f_{\tilde{w}_k}$ (and hence only on $f$ and $f'$), we have
    $$
    B_m(t_k(a)\tilde{w}_k u_0, f_{\tilde{w}_k})=0,
    $$
    for all $a\in\GL_k$ and $u_0\in N_k\backslash (U_m\cap N_k).$
\end{enumerate}
\end{lemma}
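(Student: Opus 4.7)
The plan is to establish each part in turn using the Bruhat support structure of partial Bessel functions together with Equation (\ref{Bessel Difference}) and the equivariance properties of $B_m$ under $U_{\SO_{2l}}$ and $H_m$. Throughout, $m$ is taken large enough so that all partial Bessel functions involved are well-defined and the various support arguments apply. Parts (1) and (2) are essentially Bruhat-theoretic, paralleling the analogous results for split even orthogonal and symplectic groups in \cite{HL22b, Zha18}; Part (3) requires a more delicate character-integration argument.

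For Part (1), I would analyze the Bruhat cell containing $t_{l-1}(a)$. Writing $a = u_1 w_a b_1$ for the Bruhat decomposition in $\GL_{l-1}$, we have $t_{l-1}(a)\in C(t_{l-1}(w_a))$. A short check shows $t_{l-1}(w_a)\in \mathrm{B}(\SO_{2l})$ if and only if $w_a = I$: the requirement that $t_{l-1}(w_a)\alpha_{l-1}=t_{\sigma^{-1}(l-1)}$ be simple forces $\sigma(l-1)=l-1$, and iterating this at $\alpha_{l-2}, \alpha_{l-3},\ldots$ eventually forces $\sigma=I$. If $w_a\neq I$, both $B_m(t_{l-1}(a), f)$ and $B_m(t_{l-1}(a), f')$ vanish for $m$ large by \cite[Proposition 8.6]{CPSSh05}. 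If $w_a = I$, then $t_{l-1}(a) \in B_{\SO_{2l}}=C(I_{2l})$; since $I_{2l}\not\geq \tilde{w}_n$ for any $n\leq l-1$, the cell $B_{\SO_{2l}}$ is disjoint from every $\Omega_{\tilde{w}_n}$, and the $U_{\SO_{2l}}\times U_{\SO_{2l}}$-invariance of $B_{\SO_{2l}}$ makes the integrand defining each $B_m(t_{l-1}(a),f_{\tilde{w}_n})$ vanish pointwise; Equation (\ref{Bessel Difference}) then gives the desired equality.

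For Part (2), Lemma \ref{P_{k, 1^{l-k}}} identifies the Weyl elements indexing cells in $P_{k,1^{l-k}}\tilde{w}_k P_{k,1^{l-k}}$ with the Bruhat interval $[\tilde{w}_k, w_l^{L_{k,1^{l-k}}}\tilde{w}_k]$. If some $w$ in this interval satisfied $w\geq \tilde{w}_i$ for $i\geq k+1$, transitivity would force $\tilde{w}_i\leq w_l^{L_{k,1^{l-k}}}\tilde{w}_k$; since both elements lie in $\mathrm{B}(\SO_{2l})$, Lemma \ref{CPSSh} would then require $\theta_{w_l^{L_{k,1^{l-k}}}\tilde{w}_k}\subseteq\theta_{\tilde{w}_i}$. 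Using that $w_l^{L_{k,1^{l-k}}}$ reverses $t_1,\ldots,t_k$ and fixes the remaining torus coordinates, a direct computation gives $\theta_{w_l^{L_{k,1^{l-k}}}\tilde{w}_k}=\{\alpha_{k+1},\ldots,\alpha_{l-1}\}$, while $\theta_{\tilde{w}_i}=\Delta(\SO_{2l})\setminus\{\alpha_i\}$. For $i\geq k+1$, $\alpha_i$ lies in the former but not the latter, a contradiction; hence $P_{k,1^{l-k}}\tilde{w}_k P_{k,1^{l-k}}\cap\Omega_{\tilde{w}_i}=\emptyset$. Since the double coset is $U_{\SO_{2l}}\times U_{\SO_{2l}}$-invariant and $f_{\tilde{w}_i}$ is supported in $\Omega_{\tilde{w}_i}$, we obtain $B_m(g, f_{\tilde{w}_i})=0$ for $i\geq k+1$ and $g$ in the double coset. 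Combining with the inductive hypothesis $\mathcal{C}(k-1)$, the sum $\sum_{n=k}^{l-1}B_m(g, f_{\tilde{w}_n})$ collapses to the single term $B_m(g, f_{\tilde{w}_k})$.

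For Part (3), the plan is to combine Lemma \ref{6.2} with the right $\psi_m$-equivariance of $B_m$ under $H_m$ to produce a character mismatch forcing the vanishing. Given $u^+\in S_k^+\cap U_m$, Lemma \ref{6.2} ensures $(u^+)^{-1}u_0 u^+\in N_k\setminus(U_m\cap N_k)$, while right-translation gives $B_m(gu^+, f_{\tilde{w}_k})=\psi_m(u^+)B_m(g, f_{\tilde{w}_k})$. Rewriting $gu^+ = t_k(a)\tilde{w}_k u^+\bigl((u^+)^{-1}u_0 u^+\bigr)$ and using that $\tilde{w}_k$ normalizes $S_k^+$ (so $\tilde{w}_k u^+=(u^+)'\tilde{w}_k$ with $(u^+)'\in S_k^+$), one decomposes $(u^+)'$ into its $\GL_k$- and $\SO_{2(l-k)}$-components to isolate factors whose action on $B_m$ depends on the valuation profile of $u_0$. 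The condition $u_0\notin U_m\cap N_k$ forces some coordinate of $u_0$ to violate the $H_m$ valuation bound; after commuting the relevant elements, this produces a nontrivial character integration over a suitable root subgroup and forces $B_m(g, f_{\tilde{w}_k})=0$. The main obstacle is precisely this last step: the root-level analysis must track the interplay between the $\GL_k$- and $\SO_{2(l-k)}$-components under conjugation by $\tilde{w}_k$, and must correctly handle the non-split simple root subgroup $\mathbf{x}_{\alpha_{l-1}}$ introduced in Section \ref{Howe vectors} when the relevant root is $\alpha_{l-1}$ or involves it, which is the technical point distinguishing the quasi-split non-split case from the split and symplectic settings.
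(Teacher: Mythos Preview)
Your arguments for Parts (1) and (2) are close in spirit to the paper's.  For Part~(2) you essentially reproduce the paper's proof, the only cosmetic difference being that you pass to the maximal element $w_l^{L_{k,1^{l-k}}}\tilde w_k$ and apply Lemma~\ref{CPSSh} there, whereas the paper works directly with an arbitrary $w$ in the interval and appeals to Proposition~\ref{Besselprtnon}; both reach the same contradiction.  For Part~(1) your $w_a=I$ case is the right idea, but the $w_a\neq I$ case is handled sub-optimally: claiming that the two individual partial Bessel functions vanish on a non-Bessel cell requires uniformity in $a$, which \cite[Proposition~8.6]{CPSSh05} does not give directly.  A cleaner route (which your $w_a=I$ case already contains) is to note that every cell $C(t_{l-1}(w_a))$ lies in the closed parabolic $P_{l-1}$, while $\tilde w_n\notin P_{l-1}$ forces $C(t_{l-1}(w_a))\cap\Omega_{\tilde w_n}=\emptyset$ for every $n$, so each $B_m(t_{l-1}(a),f_{\tilde w_n})$ vanishes identically and Equation~\eqref{Bessel Difference} finishes the job.

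Part~(3) is where your proposal has a genuine gap, and the paper's argument is quite different.  Your character-mismatch plan does not close: after writing $gu^+=t_k(a)(u^+)'\tilde w_k u_0'$ with $u_0'=(u^+)^{-1}u_0u^+$, you obtain $\psi_m(u^+)B_m(g,f_{\tilde w_k})$ on one side but $B_m(t_k(a)\tilde w_k u_0',f_{\tilde w_k})$ (possibly times some $\psi$-value) on the other, and there is no way to relate the latter back to $B_m(g,f_{\tilde w_k})$ because $u_0'\neq u_0$ in general and you cannot move $(u^+)'$ across $t_k(a)$ when $a$ is not upper-triangular.  No amount of root-level bookkeeping repairs this; the loop simply does not close into an equation $cB_m(g,f_{\tilde w_k})=B_m(g,f_{\tilde w_k})$ with $c\neq 1$.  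The paper instead uses a compactness argument: by Proposition~\ref{Omega_w} the set $P_{k,1^{l-k}}\tilde w_kP_{k,1^{l-k}}$ is closed in $\Omega_{\tilde w_k}$, so the compactly supported $f_{\tilde w_k}$ has its intersection with $P_k\tilde w_kN_k$ contained in $P'\tilde w_kN'$ for some compact $P'\subseteq P_{k,1^{l-k}}$ and $N'\subseteq N_k$.  Choosing $m$ so large that $N'\subseteq U_m\cap N_k$, one checks using Lemma~\ref{6.2} that for $u_0\notin U_m\cap N_k$ every integrand $f_{\tilde w_k}(u't_k(a)\tilde w_ku_0u)$ vanishes pointwise, hence $B_m(t_k(a)\tilde w_ku_0,f_{\tilde w_k})=0$.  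This is the same argument as in \cite[Lemma~6.3(3)]{Zha18}; Lemma~\ref{6.2} is used here only to ensure the $N_k$-component of $u_0u$ stays outside $N'$, not to produce a character identity.
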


\begin{proof}
\begin{enumerate}
    \item This follows from the proof of \cite[Proposition 4.8]{Haz23a}.
    \item For contradiction, suppose there exists $w\in \mathrm{B}(\SO_{2l})$ such that $$C(w)\subseteq P_{k, 1^{l-k}}\tilde{w}_k P_{k, 1^{l-k}}\cap\Omega_{\tilde{w}_k}.$$ By Lemma \ref{P_{k, 1^{l-k}}}, $$w\in [\tilde{w}_k, w_l^{L_{k, 1^{l-k}}}\tilde{w}_k]\cap\mathrm{B}(\SO_{2l})=\mathrm{B}_k(\SO_{2l}).$$ By \cite[Proposition 4.5]{Haz23a}, we have $$\{\alpha_{k+1},\dots,\alpha_l\}\subseteq\theta_w\subseteq\Delta(\SO_{2l})\backslash\{\alpha_k\}.$$ However, since $w\in\Omega_{\tilde{w}_i}$ $w\geq\tilde{w}_i$ and by Lemma \ref{CPSSh}, $$\theta_w\subseteq\theta_{\tilde{w}_i}=\Delta(\SO_{2l})\backslash\{\alpha_{i}\}.$$
    Since $i\geq k+1$ and $\{\alpha_{k+1},\dots,\alpha_l\}\subseteq\theta_w\subseteq\Delta(\SO_{2l})\backslash\{\alpha_{i}\},$ we have a contradiction.
    \item This proof is the similar to \cite[Lemma 6.3(3)]{Zha18} but we include it for completeness. By Proposition \ref{Omega_w}, $P_{k, 1^{l-k}}\tilde{w}_k P_{k, 1^{l-k}}$ is closed in $\Omega_{\tilde{w}_k}.$ Since $f_{\tilde{w}_k}\in C_c^\infty(\Omega_{\tilde{w}_k},\omega)$, there exists compacts subsets $P'\subseteq P_{k, 1^{l-k}}$ and $N'\subseteq N_k$ such that if $f_{\tilde{w}_k}(p\tilde{w}_k n)\neq 0$, then $p\in P'$ and $n\in N'.$ 
    
    Choose $m$ large enough such that $N'\subseteq U_m\cap N_k.$ This choice depends only on $N'$ and hence $f_{\tilde{w}_k}.$ Let $u\in U_m$ and write $u=u^+ u^-$ where $u^+\in S_k^+\cap U_m$ and $u^-\in N_k\cap U_m.$ Let $u_0\in N_k\backslash (U_m\cap N_k).$ By Lemma \ref{6.2}, $u_0':=(u^+)\inv u_0 u^+\in N_k\backslash (U_m\cap N_k).$ Then, for $a\in\GL_k$ and $u'\in U_{\SO_{2l}}$, we have
    \begin{align*}
    f_{\tilde{w}_k}(u't_k(a)\tilde{w}_k u_0 u)
    =f_{\tilde{w}_k}(u't_k(a)\tilde{w}_k u^+ u_0' u^-) =f_{\tilde{w}_k}(u't_k(a)\tilde{w}_k u^+\tilde{w}_k\inv \tilde{w}_k u_0' u^-).
    \end{align*}
    We have $u't_k(a)\tilde{w}_k u^+\tilde{w}_k\inv\in P_{k, 1^{l-k}}.$ Thus if $ f_{\tilde{w}_k}(u't_k(a)\tilde{w}_k u_0 u)\neq 0,$ then $u't_k(a)\tilde{w}_k u^+\tilde{w}_k\inv\in P'$ and $u_0' u^-\in N'\subseteq U_m\cap N_k.$ Since $u^-\in N_k\cap U_m,$ we must have $u_0'\in N_k\cap U_m.$ But, $u_0'\in N_k\backslash (U_m\cap N_k).$ Therefore, for any $u'\in U_{\SO_{2l}}$ and $u\in U_m,$ $f_{\tilde{w}_k}(u't_k(a)\tilde{w}_k u_0 u)=0.$ Finally, by definition, we find that
    $$
    B_m(t_k(a)\tilde{w}_k u_0, f_{\tilde{w}_k})=0,
    $$
    for all $a\in\GL_k$ and $u_0\in N_k\backslash (U_m\cap N_k).$
    \end{enumerate}
\end{proof}

The next proposition is used to connect the vanishing of certain zeta integrals to information about the partial Bessel functions.

\begin{prop}[{\cite[Lemma 2.3]{Cha19}}]\label{JS Prop}
Let $W'$ be a smooth function on $\GL_n$ such that $W'(ug)=\psi(u)W'(g)$ for any $u\in U_{\GL_n}$ and $g\in\GL_n.$ Suppose that for any irreducible generic representation $\tau$ of $\GL_n$ and for any Whittaker function $W\in\mathcal{W}(\tau,\psi\inv),$ the integral 
$$
\int_{U_{\GL_n} \backslash \GL_n} W'(g)W(g)|\mathrm{det}(g)|^{-s-k}=0,
$$
for $\mathrm{Re}(s)<<0.$ Then, $W'=0.$
\end{prop}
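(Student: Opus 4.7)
The plan is to follow \cite[Lemma 2.3]{Cha19} and argue by contrapositive: assume $W' \neq 0$ and produce some irreducible generic $\tau$ of $\GL_n$ and $W \in \mathcal{W}(\tau,\psi^{-1})$ for which the integral is nonzero, contradicting the hypothesis. Since $W'$ is smooth and satisfies $W'(ug) = \psi(u) W'(g)$, we may pick $g_0 \in \GL_n$ with $W'(g_0) \neq 0$ together with a compact open subgroup $K_0 \subset \GL_n(\mathfrak{o})$ on which $W'$ is right-invariant in a neighborhood of $g_0$.

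Using the Iwasawa decomposition $\GL_n = U_{\GL_n} T K$, where $T$ is the diagonal torus and $K = \GL_n(\mathfrak{o})$, the integral decomposes as a finite sum over coset representatives $\eta \in K/K_0$ of integrals over the torus:
\begin{equation*}
\int_{U_{\GL_n}\backslash\GL_n} W'(g)\,W(g)\,|\det g|^{-s-k}\, dg \;=\; \sum_{\eta} \int_T W'(t\eta)\, W(t\eta)\, |\det t|^{-s-k}\, \delta_B(t)^{-1}\, dt.
\end{equation*}
Each inner integral is essentially a multi-variable Mellin transform on $T \cong (F^\times)^n$, viewed as a function of $s$ and the diagonal variables. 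The key input is that for any unitary character $\omega$ of $Z = F^\times$, one can realize a sufficiently rich family of smooth, compactly-supported functions on $Z \backslash T$ (with central character $\omega$) as restrictions $t \mapsto W(t\eta)$ of Whittaker functions of irreducible supercuspidal generic representations with central character $\omega$. This is a consequence of the Bernstein--Zelevinsky theory of Kirillov models together with the compact support property of Whittaker functions of supercuspidal representations. Once a sufficiently rich family of test $W$'s is available, Mellin inversion on $T$ (and on the center $Z$) forces $W'(t\eta) = 0$ for every $t \in T$ and every coset representative $\eta$; since $\GL_n = U_{\GL_n} T K$, this means $W' \equiv 0$.

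The main obstacle lies in the realization/density step: for each $\eta$, one must ensure that the restrictions $t \mapsto W(t\eta)$ span enough of $C^\infty_c(Z \backslash T, \omega)$ (as $\tau$, $W$, and $\omega$ vary) to apply Mellin inversion cleanly. Convergence of the integral for $\mathrm{Re}(s) \ll 0$ and the legitimacy of the finite-sum decomposition above are routine once $W$ is taken to be supercuspidal, so no additional technical input is needed on that side.
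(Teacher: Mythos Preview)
The paper does not supply its own proof of this proposition; it quotes the statement from \cite[Lemma~2.3]{Cha19} and points to \cite[Lemma~3.2]{JS85} and \cite[Corollary~2.1]{Che06} for the argument.

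Your outline is broadly right, and you correctly flag the spanning step as the crux. There is, however, a coupling issue that your phrasing glosses over: once a single Whittaker function $W$ is fixed, the restrictions $t\mapsto W(t\eta)$ are determined for \emph{all} cosets $\eta\in K/K_0$ simultaneously, so establishing that these restrictions span $C_c^\infty(Z\backslash T,\omega)$ for each $\eta$ separately does not let you isolate and kill the terms of your finite sum one at a time. The Kirillov model lets you prescribe $W$ freely on the mirabolic $P_n$, but not on all of $\GL_n$, so localizing $W$ to a single $\eta K_0$-coset in $K$ is not immediate from that alone. What the cited references actually supply is a statement about the richness of Whittaker functions on all of $U_{\GL_n}\backslash\GL_n$ --- enough to test $W'$ against an arbitrary $\phi\in C_c^\infty(U_{\GL_n}\backslash\GL_n,\psi^{-1})$ by finite sums of Whittaker functions drawn from irreducible generic representations. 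With that input the Iwasawa bookkeeping becomes an unnecessary detour: pick $\phi$ supported near $U_{\GL_n}g_0$ with $\int W'\phi\neq 0$, express $\phi$ via Whittaker functions, and conclude. Your handling of the $|\det|^{-s-k}$ factor and of convergence for supercuspidal $\tau$ is fine.
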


The above proposition follows from \cite[Lemma 3.2]{JS85}. The proof of the above form follows \cite[Corollary 2.1]{Che06} (see the remark after \cite[Lemma 2.3]{Cha19}).

The next proposition is the main step in proving Theorem \ref{l-2 theorem}.

\begin{prop}\label{l-2 prop}
Let $1\leq k \leq l-2.$ The condition $\mathcal{C}(k)$ implies
$$
B_m(t_k(a)\tilde{w}_k,f_{\tilde{w}_k})=0,
$$
for any $a\in\GL_k$ and $m$ large enough depending only on $f$ and $f'.$
\end{prop}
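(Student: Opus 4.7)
The plan is to combine the functional equation defining the twisted gamma factor with the inductive hypothesis and the explicit sections from Section \ref{sections}. Fix an irreducible $\psi^{-1}$-generic representation $\tau$ of $\GL_k$ and a vector $v \in V_\tau$ with $W_v(I_k) = 1$. For $i$ sufficiently large, Section \ref{sections} produces a section $\xi_s^{i,v} \in I(\tau, s, \psi^{-1})$ whose image $\tilde{\xi}_{1-s}^{i,v} := M(\tau, s, \psi^{-1}) \xi_s^{i,v}$ under the intertwining operator satisfies the explicit formula \eqref{xi intertwined} by Lemma \ref{5.4}. The functional equations
\begin{align*}
\gamma(s, \pi \times \tau, \psi)\, \Psi(B_m(\cdot, f), \xi_s^{i,v}) &= \Psi(B_m(\cdot, f), \tilde{\xi}_{1-s}^{i,v}), \\
\gamma(s, \pi' \times \tau, \psi)\, \Psi(B_m(\cdot, f'), \xi_s^{i,v}) &= \Psi(B_m(\cdot, f'), \tilde{\xi}_{1-s}^{i,v}),
\end{align*}
together with $\mathcal{C}(k)$ and the inductive identity $B_m(\cdot, f) - B_m(\cdot, f') = \sum_{n=k}^{l-1} B_m(\cdot, f_{\tilde{w}_n})$, yield
$$
\gamma(s, \pi \times \tau, \psi) \sum_{n=k}^{l-1} \Psi(B_m(\cdot, f_{\tilde{w}_n}), \xi_s^{i,v}) = \sum_{n=k}^{l-1} \Psi(B_m(\cdot, f_{\tilde{w}_n}), \tilde{\xi}_{1-s}^{i,v}).
$$

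I claim the left hand side vanishes. Using the explicit support of $\xi_s^{i,v}$ given by \eqref{xi nonintertwined}, Lemma \ref{5.1}, and the partial Bessel property \eqref{Besseleqn}, each integral $\Psi(B_m(\cdot, f_{\tilde{w}_n}), \xi_s^{i,v})$ unfolds to an integral over $\GL_k$ whose integrand evaluates $B_m(\cdot, f_{\tilde{w}_n})$ on elements of the Borel subgroup $B_{\SO_{2l}} = C(I_{2l})$. Since $C(I_{2l}) \cap \Omega_{\tilde{w}_n} = \emptyset$ for $n \geq 1$ (as $\tilde{w}_n > I_{2l}$) and $f_{\tilde{w}_n} \in C_c^\infty(\Omega_{\tilde{w}_n}, \omega)$, we have $B_m(\cdot, f_{\tilde{w}_n}) \equiv 0$ on this locus. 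Hence every $\Psi(B_m(\cdot, f_{\tilde{w}_n}), \xi_s^{i,v})$ vanishes, and consequently so does the right hand side.

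I then unpack the right hand side. The intertwined section $\tilde{\xi}_{1-s}^{i,v}$ is supported on $g = u_1 l_k(a) w_k u_2$ with $u_2 \in D \subseteq V_k$ compact, and the identities $w^{l,k} w_k (w^{l,k})^{-1} = \tilde{w}_k$ and $w^{l,k} l_k(a) (w^{l,k})^{-1} = t_k(a)$ place the argument of $B_m$ inside $P_{k, 1^{l-k}} \tilde{w}_k P_{k, 1^{l-k}}$. By Lemma \ref{6.3}(2), the terms with $n \geq k+1$ vanish, reducing the identity to $\Psi(B_m(\cdot, f_{\tilde{w}_k}), \tilde{\xi}_{1-s}^{i,v}) = 0$. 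Unfolding this via \eqref{xi intertwined}, using Lemma \ref{6.3}(3) to discard contributions from $u_0 \notin U_m \cap N_k$, and using \eqref{Besseleqn} to collapse the $R^{l,k}$-integration, gives (up to a nonzero function of $s$)
$$
\int_{U_{\GL_k} \backslash \GL_k} B_m(t_k(a) \tilde{w}_k, f_{\tilde{w}_k})\, W_v^*(a)\, |\det a|^{\beta(s)}\, da = 0,
$$
for a linear function $\beta(s)$. Letting $v$ vary over $V_\tau$ and $\tau$ over all irreducible generic representations of $\GL_k$ (noting that $\tau \mapsto \tau^*$ is a bijection on this set), Proposition \ref{JS Prop} forces $B_m(t_k(a) \tilde{w}_k, f_{\tilde{w}_k}) = 0$ for every $a \in \GL_k$.

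The main obstacle is the unfolding in the last step: one must carry out the Bruhat-type decomposition on the support of $\tilde{\xi}_{1-s}^{i,v}$, carefully track how the conjugation by $w^{l,k}$ converts $l_k(a) w_k$ into $t_k(a) \tilde{w}_k$, and use Lemma \ref{6.3}(3) together with \eqref{Besseleqn} to eliminate the $R^{l,k}$-integration and arrive at the clean $\GL_k$-integral above. In the quasi-split non-split setting, an additional subtlety arises because the simple root subgroup for $\alpha_{l-1}$ is two-dimensional and the maximal torus is not split; arguments that in the split case \cite{HL22b} exploit the isomorphism $U_\alpha \cong F$ must instead proceed via explicit manipulations with the elements $\mathrm{\mathbf{x}}_\alpha(x)$ introduced in Section \ref{Howe vectors}.
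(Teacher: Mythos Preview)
Your approach is essentially the paper's, just with a slight reorganization: the paper shows $\Psi(W_m^f,\xi_s^{i,v})=\Psi(W_m^{f'},\xi_s^{i,v})$ directly via Lemma~\ref{6.3}(1), whereas you subtract the functional equations first and then argue the non-intertwined side vanishes. The intertwined side is handled identically.

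There is one genuine inaccuracy. You claim the non-intertwined integrand evaluates $B_m(\cdot,f_{\tilde w_n})$ on elements of the Borel $C(I_{2l})$. This is false: after removing $\bar u\in\bar V_{k,i}$ via the partial Bessel property, the argument is $r_x\, w^{l,k} j(l_k(a))(w^{l,k})^{-1}=r_x\, t_k(a)$, and $r_x\in R^{l,k}$ is \emph{lower} unipotent, so this product is not in $B_{\SO_{2l}}$. As the paper computes, it equals $t_{l-1}(b)$ with $b=\begin{pmatrix} a & 0\\ xa & I_{l-k-1}\end{pmatrix}\in\GL_{l-1}$, hence lies in the Levi $M_{l-1}$. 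Your conclusion survives this correction, since $M_{l-1}\subseteq P_{l-1}=\bigsqcup_{w''\in W(M_{l-1})} C(w'')$ and no such $w''$ dominates $\tilde w_n$ for $n\geq 1$ in the Bruhat order (any reduced word for $w''$ uses only $s_1,\dots,s_{l-2}$), so $M_{l-1}\cap\Omega_{\tilde w_n}=\emptyset$ and $B_m(t_{l-1}(b),f_{\tilde w_n})=0$. This is exactly the content of Lemma~\ref{6.3}(1), which the paper invokes directly.

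Your final paragraph about the quasi-split subtlety is a red herring for this particular proposition. Since $k\leq l-2$, the root $\alpha_{l-1}$ and the non-split part of the torus play no special role here; the paper's proof of Proposition~\ref{l-2 prop} runs exactly as in the split case with no modifications involving the elements $\mathbf{x}_{\alpha_{l-1}}(x)$. Those issues only arise later, in the $\GL_{l-1}$ and $\GL_l$ twists.
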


\begin{proof}
Let $m$ be large enough such that the induction hypothesis and Lemma \ref{6.3} hold. Let $(\tau, V_\tau)$ be an irreducible $\psi\inv$-generic representation of $\GL_k$ and $v\in V_\tau.$ Since $k\leq l-2 <l,$ we are embedding $\SO_{2k+1}$ into $\SO_{2l}$ as in $\S$\ref{gps and reps}. Let $j:\SO_{2k+1}\rightarrow\SO_{2l}$ denote the embedding. Recall that $Q_k=L_k V_k$ is the standard Siegel parabolic subgroup of $\SO_{2k+1}$ with Levi subgroup $L_k\cong\GL_k.$ Let $V_{k,m}=\{u\in V_k \, | w^{l,k} j(u) (w^{l,k})\inv\in H_m\}.$ Then $V_{k,m}$ is an open compact subgroup of $V_k.$ Let $i$ be a positive integer with $i\geq\mathrm{max}\{m,i_0(v),i(V_{k,m},v)\}$ (for notation, see Lemma \ref{5.3} for $i_0(v)$ and Lemma \ref{5.4} for $i(V_{k,m},v)$ where we take $D=V_{k,m}$). We consider the sections $\xi_s^{i,v}\in I(\tau,s,\psi\inv)$ and $\tilde{\xi}_{1-s}^{i,v}=M(\tau,s,\psi\inv)\xi_s^{i,v}\in I(\tau^*,1-s,\psi\inv)$ defined in $\S\ref{sections}.$ 

First, we compute the non-intertwined zeta integrals $\Psi(W_m^f,\xi_s^{i,v}).$ We have that $V_k L_k \ol{V}_k$ is dense in $\SO_{2k+1}$ where $\ol{V}_k$ denotes the opposite unipotent radical to $V_k.$ Hence, we can replace the integral over $U_{\SO_{2k+1}}\backslash\SO_{2k+1}$ with an integral over $$U_{\SO_{2k+1}}\backslash V_k L_k \ol{V}_k\cong U_{\GL_k}\backslash\GL_k \times \ol{V}_k,$$
 where $\GL_k$ is realized in $L_k\subseteq\SO_{2k+1}.$ Note for $g=u_1 m_k(a) \ol{u}_2\in U_{\GL_k}\backslash\GL_k \times \ol{V}_k$ we use the quotient Haar measure $dg=|\mathrm{det}(a)|^{-k} d\ol{u}_2 da.$

Thus, we have
\begin{align*}    
&\,\Psi(W_m^f,\xi_s^{i,v})\\
=&\,\int_{U_{\GL_k}\backslash\GL_k}\int_{\ol{V}_k}\left(\int_{r\in R^{l,k}} W_m^f(r w^{l,k} j(l_k(a)\ol{u}) (w^{l,k})\inv)dr\right)   \xi_s^{i,v}(l_k(a)\ol{u}, I_{l})
    |\mathrm{det}(a)|^{-k}d\ol{u}da.
\end{align*}
By Equation \eqref{xi nonintertwined}, we have
\begin{align*}
   &\,\Psi(W_m^f,\xi_s^{i,v})\\
=&\,\int_{U_{\GL_k}\backslash\GL_k}\int_{\ol{V}_{k,i}}\left(\int_{r\in R^{l,k}} W_m^f(r w^{l,k} j(l_k(a)\ol{u}) (w^{l,k})\inv)dr\right)
   |\mathrm{det}(a)|^{s-\frac{1}{2}-k} W_v(a)d\ol{u}da. 
\end{align*}
Furthermore, for $\ol{u}\in\ol{V}_{k,i}$, we have $w^{l,k} j(\ol{u}) (w^{l,k})\inv\in H_m.$ By Proposition \ref{partialBesselprop},
$$
W^f_m(g w^{l,k} j(\ol{u}) (w^{l,k})\inv) = W_m^f(g),
$$
for any $g\in\SO_{2l}.$ Thus,
\begin{align*}
  &\,\Psi(W_m^f,\xi_s^{i,v})\\
  =&\,\mathrm{Vol}(\ol{V}_{k,i})\int_{U_{\GL_k}\backslash\GL_k}\left(\int_{r\in R^{l,k}} W_m^f(r w^{l,k} j(l_k(a)) (w^{l,k})\inv)dr\right)
  |\mathrm{det}(a)|^{s-\frac{1}{2}-k} W_v(a)da.  
\end{align*}
Let $q_k(a)=\mathrm{diag}(I_{l-k-1},a,I_2,a^*,I_{l-k-1}).$ We have $j(l_k(a))=q_k(a).$ Thus, 
$$w^{l,k} q_k(a) (w^{l,k})\inv=\mathrm{diag}(a, I_{2l-2k}, a^*).$$
Let $$r_x=\left(\begin{matrix}
I_k & & & & \\
x & I_{l-k-1} & & & \\
& & I_2 & & \\
& & & I_{l-k-1} & \\
& & & x^\prime & I_k
\end{matrix}\right).$$ Then, $$
r_x w^{l,k} q_k(a) (w^{l,k})\inv
=\left(\begin{matrix}
a & & & & \\
xa & I_{l-k-1} & & & \\
& & I_2 & & \\
& & & I_{l-k-1} & \\
& & & x^\prime a^* & a^*
\end{matrix}\right)\in L_l.
$$
Thus, by Lemma \ref{6.3}(1),
$$
W_m^f(r w^{l,k} j(l_k(a)) (w^{l,k})\inv)=W_m^{f'}(r w^{l,k} j(l_k(a)) (w^{l,k})\inv),
$$
and hence 
$$
\Psi(W_m^f,\xi_s^{i,v})=\Psi(W_m^{f'},\xi_s^{i,v}).
$$

By assumption, $\gamma(s,\pi\times\tau,\psi)=\gamma(s,\pi'\times\tau,\psi).$ Since we have that $\Psi(W_m^f,\xi_s^{i,v})=\Psi(W_m^{f'},\xi_s^{i,v}),$ the definition of the $\gamma$-factor gives
$$\Psi(W_m^f,\tilde{\xi}_{1-s}^{i,v})=\Psi(W_m^{f'},\tilde{\xi}_{1-s}^{i,v}).$$
$V_k L_k w_k V_k$ is dense in $\SO_{2k+1}.$ Hence, we can replace the integral over $U_{\SO_{2k+1}}\backslash
\SO_{2k+1}$ with an integral over $U_{\SO_{2k+1}}\backslash V_k L_k w_k V_k \cong U_{\GL_k}\backslash\GL_k w_k V_k.$ We obtain $\Psi(W_m^f,\tilde{\xi}_{1-s}^{i,v})$ equals
\begin{align*}
&\,\int_{U_{\GL_k}\backslash\GL_k}\int_{V_k}\left(\int_{r\in R^{l,k}} W_m^f(r w^{l,k} j(l_k(a)w_k u) (w^{l,k})\inv)dr\right)  \\
&\,\cdot \tilde{\xi}_{1-s}^{i,v}(l_k(a)w_ku, I_{l})
  |\mathrm{det}(a)|^{-k}duda,  
\end{align*}
and we have a similar expression for $\Psi(W_m^{f'},\tilde{\xi}_{1-s}^{i,v}).$

We have $w^{l,k} j(w_k) (w^{l,k})\inv=\tilde{w}_k$ and 
$$w^{l,k} j(l_k(a)) (w^{l,k})\inv=\mathrm{diag}(a,I_{2l-2k},a^*)=t_k(a).$$ Again, let
$$r_x=\left(\begin{matrix}
I_k & & & & \\
x & I_{l-k-1} & & & \\
& & I_2 & & \\
& & & I_{l-k-1} & \\
& & & x^\prime & I_k
\end{matrix}\right).$$ Then, $
r_x t_k(a)=t_k(a)r_{ax}
$ and 
$$r_{ax}\tilde{w}_k=\tilde{w}_k\left(\begin{matrix}
I_k & & & x'a^* & \\
& I_{l-k-1} & & & xa \\
& & I_2 & & \\
& & & I_{l-k-1} & \\
& & & & I_k
\end{matrix}\right).$$ 
Also, for 
$$u=\left(\begin{matrix}
I_k & y_1   & y \\
    & 1     & y_1' \\
    &       & I_k
\end{matrix}\right)\in V_k,$$ we have 
$$
w^{l,k} j(u) (w^{l,k})\inv= \left(\begin{matrix}
I_k & 0 & * & 0 & * \\
& I_{l-k-1} & 0 & 0 & 0  \\
& & I_2 & 0 & * \\
& & & I_{l-k-1} & 0 \\
& & & & I_k
\end{matrix}\right).
$$
Recall $P_k$ is the standard maximal parabolic subgroup of $\SO_{2l}$ with Levi subgroup $M_k$ isomorphic to $\GL_k\times\SO_{2(l-k)}$ and unipotent radical $N_k.$ We have $$j(a,x,u):=\left(\begin{matrix}
I_k & & & x'a^* & \\
& I_{l-k-1} & & & xa \\
& & I_2 & & \\
& & & I_{l-k-1} & \\
& & & & I_k
\end{matrix}\right)w^{l,k} j(u) (w^{l,k})\inv\in N_k.$$ 
Thus, we have shown that
$$
r_x w^{l,k} j(l_k(a)w_k u) (w^{l,k})\inv 
=t_k(a)\tilde{w}_k j(a,x,u)\inv\in P_k \tilde{w}_k P_k.
$$
From Lemma \ref{6.3}(2), we obtain
\begin{align*}
&\,W_m^f(t_k(a)\tilde{w}_k j(a,x,u))-W_m^{f'}(t_k(a)\tilde{w}_k j(a,x,u)) \\
=&\,B_m(t_k(a)\tilde{w}_k j(a,x,u),f)-B_m(t_k(a)\tilde{w}_k j(a,x,u),f') \\
=&\,B_m(t_k(a)\tilde{w}_k j(a,x,u),f_{\tilde{w}_k}).
\end{align*}

Altogether, we  have
\begin{align}\label{l-2 twist eqn}
\begin{split}
0 =&\,\Psi(W_m^f,\tilde{\xi}_{1-s}^{i,v})-\Psi(W_m^{f'},\tilde{\xi}_{1-s}^{i,v}) \\
=&\,\int_{U_{\GL_k}\backslash\GL_k}\int_{V_k}\left(\int_{r_x\in R^{l,k}} B_m(t_k(a)\tilde{w}_k j(a,x,u),f_{\tilde{w}_k})dr\right) \\
&\,\cdot \tilde{\xi}_{1-s}^{i,v}(l_k(a)w_ku, I_{l})
|\mathrm{det}(a)|^{-k}duda \\
=&\,\int_{U_{\GL_k}\backslash\GL_k}\int_{V_k}\left(\int_{r_x\in R^{l,k}} B_m(t_k(a)\tilde{w}_k j(I_k,x,u),f_{\tilde{w}_k})dr\right) \\
&\,\cdot \tilde{\xi}_{1-s}^{i,v}(l_k(a)w_ku, I_{l})
|\mathrm{det}(a)|^{-l}duda,
\end{split}
\end{align}
where we made the change of variables $x\mapsto xa\inv$ in the last step.

Next, let $D_m=\{j(I_k,x,u)\, | \, j(I_k,x,u)\in H_m\}.$ By Lemma \ref{6.3}(3), we have, for $j(I_k,x,u)\notin D_m,$
$$
B_m(t_k(a)\tilde{w}_k j(I_k,x,u),f_{\tilde{w}_k})=0.
$$
However, by Proposition \ref{partialBesselprop}, for $j(I_k,x,u)\in D_m,$ 
$$
B_m(t_k(a)\tilde{w}_k j(I_k,x,u),f_{\tilde{w}_k})=B_m(t_k(a)\tilde{w}_k,f_{\tilde{w}_k}).
$$
Furthermore, for $j(I_k,x,u)\in D_m$, we have $u\in V_{k,m}$ and thus, by Lemma \ref{5.4}, $\tilde{\xi}_{1-s}^{i,v}(l_k(a) w_n u)=\mathrm{Vol}(\ol{V}_{k,i})|\mathrm{det}(a)|^{\frac{1}{2}-s} W_v^*(a).$

From Equation \eqref{l-2 twist eqn}, we obtain
$$
0=\int_{U_{\GL_k}\backslash\GL_k}B_m(t_k(a)\tilde{w}_k,f_{\tilde{w}_k}) |\mathrm{det}(a)|^{\frac{1}{2}-s-l}W_v^*(a)da.
$$
This holds for any irreducible $\psi\inv$-generic representations $(\tau, V_\tau)$ of $\GL_k$ and $v\in V_\tau.$ From Proposition \ref{JS Prop}, we have
$$
B_m(t_k(a)\tilde{w}_k,f_{\tilde{w}_k})=0,
$$
for any $a\in\GL_k.$
\end{proof}

As a corollary of the computations above, we obtain the following result on the equality of twisted gamma factors of $\pi$ and $\pi^c$, which can also be implied by the work of \cite{Art13, JL14}. 

\begin{cor}\label{conj gamma1}
Let $\pi$ be an irreducible  $\psi$-generic supercuspidal representation of $\SO_{2l}$. Then $\gamma(s, \pi\times\tau,\psi)=\gamma(s, \pi^c\times\tau,\psi)$ for all irreducible $\psi^{-1}$-generic representations $\tau$ of $\GL_{k}$ for $k\leq l-2$.
\end{cor}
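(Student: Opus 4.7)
The plan is to rerun the calculations from the proof of Proposition \ref{l-2 prop} with the pair $(\pi, \pi^c)$ in place of $(\pi, \pi')$ and extract the gamma-factor identity directly from the functional equation. First, since the center of $\SO_{2l}$ commutes with $c$, the representations $\pi$ and $\pi^c$ share the same central character $\omega$, so $\mathcal{C}(0)$ holds automatically for this pair. Fix a matrix coefficient $f \in \mathcal{M}(\pi)$ with $W^f(I_{2l})=1$ and set $f^c(g) := f(cgc) \in \mathcal{M}(\pi^c)$; Lemma \ref{lemma conj bessel} then gives $B_m(\cdot, f^c) = B_m^c(\cdot, f)$.

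The crux is to show that, for $k \leq l-2$ and any irreducible $\psi^{-1}$-generic representation $\tau$ of $\GL_k$, both zeta integrals coincide with their $f^c$-counterparts:
$$\Psi(W^f_m, \xi_s^{i,v}) = \Psi(W^{f^c}_m, \xi_s^{i,v}) \quad \text{and} \quad \Psi(W^f_m, \tilde{\xi}^{i,v}_{1-s}) = \Psi(W^{f^c}_m, \tilde{\xi}^{i,v}_{1-s}),$$
for the sections constructed in Section \ref{sections}. The first identity is exactly the non-intertwined calculation at the start of the proof of Proposition \ref{l-2 prop}: the integrand reduces to values of the partial Bessel function on the Levi subgroup $t_{l-1}(\GL_{l-1})$, and Lemma \ref{6.3}(1) (which only requires $\mathcal{C}(0)$) yields $B_m(\cdot, f) = B_m(\cdot, f^c)$ on this locus.

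For the second identity, the reduction in Proposition \ref{l-2 prop} expresses the integrand via $B_m(t_k(a)\tilde{w}_k j(a,x,u), \cdot)$. I would verify $B_m(g, f) = B_m^c(g, f) = B_m(c\tilde{t}^{-1}g\tilde{t}c, f)$ on this locus by showing $c\tilde{t}^{-1}g\tilde{t}c$ agrees with $g$ modulo an element of $U_{\SO_{2l}} \times H_m$ acting with trivial $(\psi, \psi_m)$-character. The geometric input is that for $k \leq l-2$ the nontrivial entries of $c\tilde{t}^{-1}$ (in the middle $2\times 2$ block at positions $l, l+1$) lie inside the identity blocks of $t_k(a)$, $\tilde{w}_k$, and $R^{l,k}$, so these commute with $c\tilde{t}^{-1}$; moreover, the embedded image $w^{l,k} j(\SO_{2k+1})(w^{l,k})\inv$ is pointwise fixed under $c$-conjugation since $j(\SO_{2k+1})$ is the fixed-point subgroup of the outer automorphism of the $\SO_{2k+2}$-factor of the Levi $\GL_{l-k-1} \times \SO_{2k+2}$. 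The residual $\tilde{t}$-action on $j(u)$ is inner on the embedded $\SO_{2k+1}$ and can be absorbed by a change of variables in the $V_k$-integration.

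Combining both identities with the defining relation $\gamma(s, \pi \times \tau, \psi)\Psi(W^f_m, \xi_s^{i,v}) = \Psi(W^f_m, \tilde{\xi}^{i,v}_{1-s})$ and its analogue for $\pi^c$, together with the choice of $v$ and $i$ ensuring $\Psi(W^f_m, \xi_s^{i,v}) \not\equiv 0$, gives the desired equality of gamma factors. The principal obstacle is the intertwined side: verifying that the $c\tilde{t}^{-1}$-action on $j(a,x,u)$ can be absorbed by the integration variables requires careful explicit matrix computation, and it is precisely at $k = l-1$ and $k = l$ that this absorption fails, reflecting the restriction of the corollary to $k \leq l-2$ and foreshadowing the more delicate analysis needed for the twists by $\GL_{l-1}$ and $\GL_l$ in Sections \ref{l-1 thm} and \ref{l thm}.
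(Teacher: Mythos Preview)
Your approach is essentially the same as the paper's: rerun the computations of Proposition~\ref{l-2 prop} for the pair $(\pi,\pi^c)$ and show that both the non-intertwined and the intertwined zeta integrals coincide, then read off the gamma-factor equality from the functional equation. The paper condenses the argument to the single identity $c\tilde{t}^{-1}t_k(a)\tilde{w}_k\tilde{t}c=t_k(a)\tilde{w}_k$: after the full reduction in Proposition~\ref{l-2 prop} (via Lemma~\ref{6.3}(2),(3), which in turn uses the inductively established $\mathcal{C}(k-1)$ for the pair $(\pi,\pi^c)$), the intertwined difference is governed entirely by $B_m(t_k(a)\tilde{w}_k,f_{\tilde{w}_k})$, and this vanishes because at the $c\tilde{t}^{-1}$-fixed point $t_k(a)\tilde{w}_k$ one has $B_m(\cdot,f)=B_m^c(\cdot,f)=B_m(\cdot,f^c)$.

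Your more hands-on variant---tracking the $c\tilde{t}^{-1}$-action on the full integrand $t_k(a)\tilde{w}_k j(a,x,u)$ and absorbing the residual action by a change of variables in the $V_k$-integration---is also valid and has the mild advantage of avoiding the induction on $k$. One imprecision: the sentence ``$c\tilde{t}^{-1}g\tilde{t}c$ agrees with $g$ modulo an element of $U_{\SO_{2l}}\times H_m$'' is not what actually happens. The conjugate is $t_k(a)\tilde{w}_k j(a,x,u')$ for a \emph{different} $u'\in V_k$ (concretely, $y_1\mapsto -y_1$ in $u$), so there is no pointwise identity of Bessel values; the equality of integrals comes from the substitution $u\mapsto u'$, which preserves $V_{k,m}$ (since $H_m$ is $c\tilde{t}^{-1}$-stable) and leaves $\tilde{\xi}^{i,v}_{1-s}(l_k(a)w_ku,I_l)$ unchanged by \eqref{xi intertwined}. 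With that correction, the argument goes through as you outline.
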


\begin{proof}
Since $c\tilde{t}\inv t_k(a)\tilde{w}_k \tilde{t}c=t_k(a)\tilde{w}_k$ for any $a\in\GL_k$, repeating the above arguments for $\pi'=\pi^c$ without the assumption that the $\gamma$-factors are equal gives the claim.
\end{proof}

Set $\psi_n=1$ for any $n\leq l-2$ and $\psi_{l-1}=\frac{1}{2}.$ These are the coefficients which are used to define the generic character $\psi$ of $U_{\SO_{2l}}.$ Recall that $S_{\SO_{2l}}$ denotes the split part of the torus $T_{\SO_{2l}}.$ For $w\in W(\SO_{2l}),$ we define
$$T_w=\{t\in S_{\SO_{2l}} \, | \, \psi_{n'} \alpha_{n'}(t)=\psi_{n} \, \mathrm{for \, any} \, \alpha_n\in\theta_w \, \mathrm{such \, that}\, w\alpha_{n}=\alpha_{n'} \}.$$
We are ready to prove Theorem \ref{l-2 theorem}.
\begin{proof}[Proof of Theorem \ref{l-2 theorem}]

Let $w_{\mathrm{max}}=w_l^{L_{k,1^{l-k}}}\tilde{w}_k.$ Then $w_{long}w_{\mathrm{max}}$ is the long Weyl element of $M_{w_{\mathrm{max}}}\cong\GL_{1}^k\times\SO_{2(l-k)}.$ Let $Z(\SO_{2l})$ denote the center of $\SO_{2l}.$ We have 
$$
T_{w_{\mathrm{max}}}=\left\{t_k(a_1,\dots,a_k) \, | \, a_i\in F^\times \right\} \times Z(\SO_{2l}).
$$
For $t\in T_{w_{\mathrm{max}}},$ write $t=zt_k(a)$ where $z\in Z(\SO_{2l})$ and $a\in\GL_1^k.$ We have \[t_k(a)w_l^{L_{k,1^{l-k}}}=t_k(a)w_l^{L_{k,1^{l-k}}}=t_n(b)\] for some $b\in\GL_k.$ Thus, for any $t\in T_{w_{\mathrm{max}}},$
$$
B_m(tw_{\mathrm{max}},f_{\tilde{w}_k})=\omega(z)B_m(t_k(a)w_l^{L_{k,1^{l-k}}}\tilde{w}_k,f_{\tilde{w}_k})=0,
$$
by Proposition \ref{l-2 prop}.

Next, suppose that $w\in \mathrm{B}(\SO_{2l})$ with $\tilde{w}_k\leq w \leq w_{\mathrm{max}}.$ By Lemma \ref{CPSSh}, $A_w\subseteq A_{w_{\mathrm{max}}}.$ Also $[\tilde{w}_k, w_{\mathrm{max}}]\cap\mathrm{B}(\SO_{2l})=\mathrm{B}_k(\SO_{2l})$. Again, from Proposition \ref{l-2 prop}, we find
$$
B_m(tw,f_{\tilde{w}_k})=0,
$$
for any $t\in T_w.$

Let 
$$
\Omega_{\tilde{w}_k}'=\bigcup_{\substack{w\in \mathrm{B}(\SO_{2l}), w>w_{\mathrm{max}}, \\ d_{B}(w,w_{\mathrm{max}})=1}} \Omega_w.
$$
By Theorem \ref{CST}, there exists $f_{\tilde{w}_k}'\in C_c^\infty(\Omega_{\tilde{w}_k}',\omega)$ such that
$$
B_m(g, f_{\tilde{w}_k})=B_m(g,f_{\tilde{w}_k}'),
$$
for any $g\in\SO_{2l}$ and $m$ large enough (depending only $f_{\tilde{w}_k}$ and hence $f$ and $f'$).

We describe the set $\{w\in \mathrm{B}(\SO_{2l}) \, | \, w>w_{\mathrm{max}},  d_{B}(w,w_{\mathrm{max}})=1\}.$ From the proof of Lemma \ref{6.3}(2), we have $\theta_{w_{\mathrm{max}}}=\{\alpha_{k+1},\dots,\alpha_{l-1}\}.$ Since $w>w_{\mathrm{max}}$ and  $d_{B}(w,w_{\mathrm{max}})=1,$ we must have $\theta_w=\theta_{w_{\mathrm{max}}}\backslash\{\alpha_i\}$ for some $k+1\leq i \leq {l-1}.$ Set $w_i'=w_{\theta_{w_{\mathrm{max}}}\backslash\{\alpha_i\}}.$ Then,
$$
\{w\in \mathrm{B}(\SO_{2l}) \, | \, w>w_{\mathrm{max}},  d_{B}(w,w_{\mathrm{max}})=1\}=\{w_i' \, | \, k+1\leq i \leq {l-1} \}.
$$
Thus, 
$$\Omega_{\tilde{w}_k}'=\bigcup_{i=k+1}^{l-1} \Omega_{w_i'}.$$ By a partition of unity argument (similar to the proof of Corollary \ref{C(0)}), there exists $f_{w_i'}\in C_c^\infty(\Omega_{w_i'},\omega)$ for $k+1\leq i \leq l-1$ such that
$$
f_{\tilde{w}_k}'=\sum_{i=k+1}^{l-1} f_{w_i'}.
$$
So,
\begin{equation}\label{inductive step eqn}
B_m(g,f_{\tilde{w}_k})=B_m(g,f_{\tilde{w}_k}')=\sum_{i=k+1}^{l-1} B_m(g,f_{w_i'}),
\end{equation}
for any $g\in\SO_{2l}$ and $m$ large enough.

Finally, fix $i$ such that $k+1\leq i \leq {l-1}.$ We have $\theta_{\tilde{w}_i}=\Delta(\SO_{2l})\backslash\{\alpha_i\}$. Since $\theta_{w_i'}\subseteq\theta_{w_{\mathrm{max}}}\backslash\{\alpha_i\}\subseteq\theta_{\tilde{w}_i},$ it follows that $w_i'>\tilde{w}_i$ and hence $\Omega_{w_i'}\subseteq\Omega_{\tilde{w}_i}.$ By Proposition \ref{Omega_w}, $\Omega_{w_i'}$ is open in $\Omega_{\tilde{w}_i}.$ Hence, $C_c^\infty(\Omega_{w_i'},\omega)\subseteq C_c^\infty(\Omega_{\tilde{w}_i},\omega).$ Thus $f_{w_i}'\in C_c^\infty(\Omega_{w_i'},\omega)$. Set
$$
f_{\tilde{w}_i}'= f_{w_i}' + f_{\tilde{w}_i}\in C_c^\infty(\Omega_{w_i'},\omega).
$$
Then, by the inductive hypothesis and Equation \eqref{inductive step eqn}, we have
$$
B_m(g,f)-B_m(g,f')=\sum_{i=k+1}^{l-1} B_m(g,f_{\tilde{w}_i}').
$$
for any $g\in\SO_{2l}$ and $m$ large enough depending only on $f$ and $f'.$
This completes the proof of Theorem \ref{l-2 theorem}.
\end{proof}

\subsection{\texorpdfstring{Twists up to $\GL_{l-1}$}{}}

From Theorem \ref{l-2 theorem}, we have that $\mathcal{C}(l-2)$ implies
\begin{equation}\label{Bessel difference at l-2}
    B_m(g,f)-B_m(g,f')= B_m(g,f_{\tilde{w}_{l-1}}),
\end{equation}
for any $g\in\SO_{2l}$ and $m$ large enough. In this section, we show that $\mathcal{C}({l-1})$ implies $B_m(g,f_{\tilde{w}_{l-1}})$ vanishes
if $g$ is fixed by the outer conjugation by $c.$ This does not determine the entirety of the support though. Indeed, the support may contain elements which are not fixed by $c$, e.g., $t\tilde{w}_{l-1}$ where $t\in T_{\SO_{2l}}\setminus S_{\SO_{2l}}.$ These elements will be determined by the condition $\mathcal{C}({l})$ in the next subsection at the cost of a conjugation of the partial Bessel function. We proceed with the computation for $\mathcal{C}({l-1})$.

\begin{thm}\label{l-1 thm}
The condition $\mathcal{C}(l-1)$ implies
$$
B_m(t_{l-1}(a)\tilde{w}_{l-1},f_{\tilde{w}_{l-1}})=0,
$$
for any $a\in\GL_{l-1}$ and $m$ large enough depending only on $f$ and $f'.$
\end{thm}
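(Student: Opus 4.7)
The argument will mirror the proof of Proposition \ref{l-2 prop}, specialized to the level $k=l-1$, with two simplifications coming from the fact that $l-(l-1)-1=0$: the subgroup $R^{l,l-1}$ is trivial and the character $\psi$ on $N^{1}$ is trivial (so that $w^{l,l-1}=I_{2l}$ and the embedding $j\colon\SO_{2l-1}\hookrightarrow\SO_{2l}$ requires no conjugation). The plan is the following. Fix an irreducible $\psi^{-1}$-generic representation $(\tau,V_\tau)$ of $\GL_{l-1}$ and a vector $v\in V_\tau$, and construct the sections $\xi_s^{i,v}\in I(\tau,s,\psi^{-1})$ and $\tilde{\xi}_{1-s}^{i,v}=M(\tau,s,\psi^{-1})\xi_s^{i,v}$ as in \S\ref{sections}, with $i$ chosen sufficiently large relative to $m$ and the relevant compact sets.

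First I would compute $\Psi(W_m^f,\xi_s^{i,v})$ by unfolding over the dense open subset $V_{l-1}L_{l-1}\ol{V}_{l-1}$ of $\SO_{2l-1}$. Equation \eqref{xi nonintertwined} restricts the $\ol{u}$-integral to $\ol{V}_{l-1,i}$; for such $\ol{u}$, $j(\ol{u})\in H_m$ and Proposition \ref{partialBesselprop} gives $W_m^f(j(l_{l-1}(a)\ol{u}))=W_m^f(t_{l-1}(a))$. Because Lemma \ref{6.3}(1) provides the equality $W_m^f(t_{l-1}(a))=W_m^{f'}(t_{l-1}(a))$ independently of any gamma-factor hypothesis, it follows that $\Psi(W_m^f,\xi_s^{i,v})=\Psi(W_m^{f'},\xi_s^{i,v})$. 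The hypothesis $\mathcal{C}(l-1)$ then forces the equality of the intertwined integrals $\Psi(W_m^f,\tilde{\xi}_{1-s}^{i,v})=\Psi(W_m^{f'},\tilde{\xi}_{1-s}^{i,v})$.

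Next, I would unwind this intertwined equality on the dense open $V_{l-1}L_{l-1}w_{l-1}V_{l-1}\subseteq\SO_{2l-1}$. Writing $j(l_{l-1}(a)w_{l-1}u)=t_{l-1}(a)\tilde{w}_{l-1}j(u)$ with $j(u)\in N_{l-1}$, I would invoke Equation \eqref{Bessel difference at l-2} (coming from Theorem \ref{l-2 theorem} at level $l-2$) to replace $W_m^f-W_m^{f'}$ by $B_m(\cdot,f_{\tilde{w}_{l-1}})$. Lemma \ref{6.3}(3) kills the contribution whenever $j(u)\notin U_m\cap N_{l-1}$; for $j(u)\in H_m$, Proposition \ref{partialBesselprop} extracts the factor $B_m(t_{l-1}(a)\tilde{w}_{l-1},f_{\tilde{w}_{l-1}})$ from the integrand, while Equation \eqref{xi intertwined} together with Lemma \ref{5.4} handles the $\tilde{\xi}_{1-s}^{i,v}$-side. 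The resulting identity reduces to
\begin{equation*}
0 = C\int_{U_{\GL_{l-1}}\backslash\GL_{l-1}} B_m(t_{l-1}(a)\tilde{w}_{l-1},f_{\tilde{w}_{l-1}})\,|\mathrm{det}(a)|^{\frac{1}{2}-s-(l-1)}W_v^*(a)\,da
\end{equation*}
for a nonzero constant $C$. Applying Proposition \ref{JS Prop} to $\tau^*$ and letting $\tau$ vary over all irreducible $\psi^{-1}$-generic representations of $\GL_{l-1}$ yields $B_m(t_{l-1}(a)\tilde{w}_{l-1},f_{\tilde{w}_{l-1}})=0$ for every $a\in\GL_{l-1}$.

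The main obstacle is structural rather than computational: because $R^{l,l-1}$ is trivial, the integrand never picks up the auxiliary parameter $x$ that, in the proof of Proposition \ref{l-2 prop}, allowed the torus element $a$ to be transported to elements of $\SO_{2l}$ lying outside $t_k(\GL_k)\tilde{w}_k$. Consequently, the present argument detects only elements of the form $t_{l-1}(a)\tilde{w}_{l-1}$, which are exactly those fixed by the outer conjugation $c$; elements $\tilde{t}\tilde{w}_{l-1}$ with $\tilde{t}$ in the non-split part of $T_{\SO_{2l}}$ cannot be controlled by $\GL_{l-1}$-twists, which is precisely why the stronger $\GL_l$ twists become necessary in Theorem \ref{l thm}.
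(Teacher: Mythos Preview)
Your proposal is correct and follows essentially the same approach as the paper's proof: unfold the non-intertwined integral over $V_{l-1}L_{l-1}\ol{V}_{l-1}$, use Lemma \ref{6.3}(1) to match the two sides, apply $\mathcal{C}(l-1)$ to pass to the intertwined side, unfold over $V_{l-1}L_{l-1}w_{l-1}V_{l-1}$, reduce to $B_m(\cdot,f_{\tilde{w}_{l-1}})$ via Theorem \ref{l-2 theorem} (the paper phrases this through Lemma \ref{6.3}(2), but this is the same), cut down the unipotent integral by Lemma \ref{6.3}(3) and Proposition \ref{partialBesselprop}, and finish with Proposition \ref{JS Prop}. Your remarks about the triviality of $R^{l,l-1}$ and $w^{l,l-1}$ and about why only $c$-fixed torus elements are reached are exactly the points the paper emphasizes.
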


\begin{proof}
Let $m$ be large enough such that Theorem \ref{l-2 theorem} and Lemma \ref{6.3} hold. Let $(\tau, V_\tau)$ be an irreducible $\psi\inv$-generic representation of $\GL_{l-1}$ and $v\in V_\tau.$ We consider the embedding of $\SO_{2l-1}$ into $\SO_{2l}$ as in $\S$\ref{gps and reps}. Let $j:\SO_{2l-1}\rightarrow\SO_{2l}$ denote the embedding. Recall that $Q_{l-1}=L_{l-1} V_{l-1}$ is the standard Siegel parabolic subgroup of $\SO_{2l-1}$ with Levi subgroup $L_{l-1}\cong\GL_{l-1}.$ Note that $w^{l,l-1}$ is trivial. Let $V_{{l-1},m}=\{u\in V_{l-1} \, |  j(u) \in H_m\}.$ Then $V_{{l-1},m}$ is an open compact subgroup of $V_{l-1}.$ Let $i$ be a positive integer with $i\geq\mathrm{max}\{m,i_0(v),i(V_{{l-1},m},v)\}$ (for notation, see Lemma \ref{5.3} for $i_0(v)$ and Lemma \ref{5.4} for $i(V_{{l-1},m},v)$ where we take $D=V_{{l-1},m}$). We fix the section $\xi_s^{i,v}\in I(\tau,s,\psi\inv)$ defined in $\S\ref{sections}.$ We also consider the section $\tilde{\xi}_{1-s}^{i,v}=M(\tau,s,\psi\inv)\xi_s^{i,v}\in I(\tau^*,1-s,\psi\inv)$. 

First, we compute the non-intertwined zeta integrals $\Psi(W_m^f,\xi_s^{i,v}).$ We have that $V_{l-1} L_{l-1} \ol{V}_{l-1}$ is dense in $\SO_{2l-1}$ where $\ol{V}_{l-1}$ denotes the opposite unipotent radical to $V_{l-1}.$ Hence, we can replace the integral over $U_{\SO_{2l-1}}\backslash\SO_{2l-1}$ with an integral over $U_{\SO_{2l-1}}\backslash V_{l-1} L_{l-1} \ol{V}_{l-1}\cong U_{\GL_{l-1}}\backslash\GL_{l-1} \times \ol{V}_{l-1}$ where $\GL_{l-1}$ is realized in $L_{l-1}\subseteq\SO_{2l-1}.$ For $g=u_1 l_{l-1}(a) \ol{u}_2\in U_{\GL_k}\backslash\GL_{l-1} \times \ol{V}_{l-1}$ we use the quotient Haar measure $dg=|\mathrm{det}(a)|^{-({l-1})} d\ol{u}_2 da.$ 

Note that $R^{l,l-1}$ is trivial. We have
\begin{align*}
    \Psi(W_m^f,\xi_s^{i,v})=\int_{U_{\GL_{l-1}}\backslash\GL_{l-1}}\int_{\ol{V}_{l-1}} W_m^f(  j(l_{l-1}(a)\ol{u})) \xi_s^{i,v}(l_{l-1}(a)\ol{u}, I_{l})|\mathrm{det}(a)|^{-({l-1})}d\ol{u}da.
\end{align*}
By Equation \eqref{xi nonintertwined}, we have
$$
\Psi(W_m^f,\xi_s^{i,v})=\int_{U_{\GL_{l-1}}\backslash\GL_{l-1}}\int_{\ol{V}_{{l-1},i}}W_m^f( j(l_{l-1}(a)\ol{u})) |\mathrm{det}(a)|^{s-\frac{1}{2}-({l-1})} W_v(a)d\ol{u}da.
$$
Furthermore, for $\ol{u}\in\ol{V}_{{l-1},i}$, we have $j(\ol{u}) \in H_m.$ By Proposition \ref{partialBesselprop}, for any $g\in\SO_{2l},$
$$
W^f_m(g j(\ol{u}))= W_m^f(g).
$$
Thus,
$$
\Psi(W_m^f,\xi_s^{i,v})=\mathrm{Vol}(\ol{V}_{{l-1},i})\int_{U_{\GL_{l-1}}\backslash\GL_{l-1}}W_m^f( j(l_{l-1}(a))  |\mathrm{det}(a)|^{s-\frac{1}{2}-({l-1})} W_v(a)da.
$$
Let $q_{l-1}(a)=\mathrm{diag}(a,I_2,a^*).$ We have $j(l_{l-1}(a))=q_{l-1}(a).$ 
Thus, by Lemma \ref{6.3}(1),
$$
W_m^f( j(l_{l-1}(a)) )=W_m^{f'}( j(l_{l-1}(a)) ),
$$
and hence 
$$
\Psi(W_m^f,\xi_s^{i,v})=\Psi(W_m^{f'},\xi_s^{i,v}).
$$

By assumption, $\gamma(s,\pi\times\tau,\psi)=\gamma(s,\pi'\times\tau,\psi).$ Since $\Psi(W_m^f,\xi_s^{i,v})=\Psi(W_m^{f'},\xi_s^{i,v}),$ the local functional equation gives
$$\Psi(W_m^f,\tilde{\xi}_{1-s}^{i,v})=\Psi(W_m^{f'},\tilde{\xi}_{1-s}^{i,v}).$$

We have that $V_{l-1} L_{l-1} w_{l-1} V_{l-1}$ is dense in $\SO_{2l-1}.$ Hence, we can replace the integral over $U_{\SO_{2l-1}}\backslash\SO_{2l-1}$ with an integral over $U_{\SO_{2l-1}}\backslash V_{l-1} L_{l-1} w_{l-1} V_{l-1}$ which is isomorphic to $U_{\GL_{l-1}}\backslash\GL_{l-1} w_{l-1} V_{l-1}.$ We obtain
\begin{align*}
   &\,\Psi(W_m^f,\tilde{\xi}_{1-s}^{i,v})\\
   =&\, \int_{U_{\GL_{l-1}}\backslash\GL_{l-1}}\int_{V_{l-1}} W_m^f(  j(l_{l-1}(a)w_{l-1} u))    \tilde{\xi}_{1-s}^{i,v}(l_{l-1}(a)w_{l-1} u, I_{l})|\mathrm{det}(a)|^{-({l-1})}duda. 
\end{align*}
We have a similar expression for $\Psi(W_m^{f'},\tilde{\xi}_{1-s}^{i,v}).$

Note that $j(w_{l-1})=\tilde{w}_{l-1}$ and $ j(l_{l-1}(a)))=\mathrm{diag}(a,I_{2},a^*)=t_{l-1}(a).$ For 
$$u=\left(\begin{matrix}
I_{l-1} & y_1   & y \\
    & 1     & y_1' \\
    &       & I_{l-1}
\end{matrix}\right)\in V_{l-1},$$ we have 
$$
 j(u) = \left(\begin{matrix}
I_{l-1} & y_1 &  & y \\
&  1 &  & y_1' \\
& & 1 & \\
& & &  I_{l-1}
\end{matrix}\right).
$$
Recall $P_{l-1}$ is the standard maximal parabolic subgroup of $\SO_{2l}$ with Levi subgroup $M_{l-1}$ isomorphic to $\GL_{l-1}\times\SO_{2}$ and unipotent radical $N_{l-1}.$
We have
$$
 j(l_{l-1}(a)w_{l-1} u)  
=t_{l-1}(a)\tilde{w}_{l-1} j(u)\inv\in P_{l-1} \tilde{w}_{l-1} P_{l-1}.
$$
Now, unlike the split case, we can apply Lemma \ref{6.3}(2).
\begin{align*}
&\,W_m^f(t_{l-1}(a)\tilde{w}_{l-1} j(u))-W_m^{f'}(t_{l-1}(a)\tilde{w}_{l-1} j(u))
\\
=&\,B_m(t_{l-1}(a)\tilde{w}_{l-1} j(u),f)-B_m(t_{l-1}(a)\tilde{w}_{l-1} j(u),f')
\\
=&\,B_m(t_{l-1}(a)\tilde{w}_{l-1} j(u),f_{\tilde{w}_{l-1}}).
\end{align*}

Altogether, we  have
\begin{align}\label{l-1 twist eqn}
\begin{split}
0&=\Psi(W_m^f,\tilde{\xi}_{1-s}^{i,v})-\Psi(W_m^{f'},\tilde{\xi}_{1-s}^{i,v}) \\
&=\int_{U_{\GL_{l-1}}\backslash\GL_{l-1}}\int_{V_{l-1}} B_m(t_{l-1}(a)\tilde{w}_{l-1} j(u),f_{\tilde{w}_{l-1}})  \tilde{\xi}_{1-s}^{i,v}(l_{l-1}(a)w_{l-1} u, I_{l})|\mathrm{det}(a)|^{-({l-1})}duda. 
\end{split}
\end{align}

By Lemma \ref{6.3}(3), for $u\notin V_{l-1,m},$
$$
B_m(t_{l-1}(a)\tilde{w}_{l-1} j(u),f_{\tilde{w}_{l-1}})=0.
$$
However, by Proposition \ref{partialBesselprop}, for $u\in V_{l-1,m},$ 
$$
B_m(t_{l-1}(a)\tilde{w}_{l-1} j(u),f_{\tilde{w}_{l-1}})=B_m(t_{l-1}(a)\tilde{w}_{l-1},f_{\tilde{w}_{l-1}}).
$$
Note that we used $\psi_m(j(u))=1.$
By Equation \eqref{xi intertwined}, for $u\in V_{l-1,m}$, $$\tilde{\xi}_{1-s}^{i,v}(l_{l-1}(a) w_{l-1} u, I_l)=\mathrm{Vol}(\ol{V}_{{l-1},i})|\mathrm{det}(a)|^{\frac{1}{2}-s} W_v^*(a).$$

From Equation \eqref{l-1 twist eqn}, we obtain
$$
0=\int_{U_{\GL_{l-1}}\backslash\GL_{l-1}}B_m(t_{l-1}(a)\tilde{w}_{l-1},f_{\tilde{w}_{l-1}}) |\mathrm{det}(a)|^{\frac{1}{2}-s-(l-1)}W_v^*(a)da.
$$
This holds for any irreducible $\psi\inv$-generic representations $(\tau, V_\tau)$ of $\GL_{l-1}$ and $v\in V_\tau.$ By Proposition \ref{JS Prop}, we have
$$
B_m(t_{l-1}(a)\tilde{w}_{l-1},f_{\tilde{w}_{l-1}})=0,
$$
for any $a\in\GL_{l-1}.$
This completes the proof of Theorem \ref{l-1 thm}.
\end{proof}

Similarly to the other twists, we have the following corollary. 

\begin{cor}\label{conj gamma2}
Let $\pi$ be an irreducible  $\psi$-generic supercuspidal representation of $\SO_{2l}$. Then $\gamma(s, \pi\times\tau,\psi)=\gamma(s, \pi^c\times\tau,\psi)$ for all irreducible $\psi^{-1}$-generic representations $\tau$ of $\GL_{l-1}.$
\end{cor}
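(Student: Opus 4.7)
\medskip

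The plan is to mirror the one-line proof of Corollary \ref{conj gamma1}: we repeat the argument of Theorem \ref{l-1 thm} with $\pi' = \pi^c$ in place of a second representation satisfying $\mathcal{C}(l-1)$, and observe that the relevant identities between integrals hold for purely structural reasons, without invoking any hypothesis on the equality of $\gamma$-factors. The conclusion will then follow from the uniqueness of the proportionality constant in the local functional equation.

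More precisely, I would take the matrix coefficient $f'(g) = f(c\tilde{t}\inv g \tilde{t} c) \in \mathcal{M}(\pi^c)$, so that $W^{f'}(I_{2l}) = 1$ and $B_m(g, f') = W^f_m(c\tilde{t}\inv g \tilde{t} c)$ by Lemma \ref{lemma conj bessel}(2). The crucial identity is
$$
c\tilde{t}\inv \, t_{l-1}(a) \tilde{w}_{l-1} \, \tilde{t} c = t_{l-1}(a) \tilde{w}_{l-1} \quad \text{for all } a \in \GL_{l-1},
$$
which follows directly from the explicit forms of the matrices: the matrix $c\tilde{t} = \mathrm{diag}(I_{l-1},-1,1,I_{l-1})$ commutes with $t_{l-1}(a) = \mathrm{diag}(a, I_2, a^*)$, and its conjugation action on $\tilde{w}_{l-1}$ preserves the middle $2 \times 2$ block $\mathrm{diag}((-1)^{l-1},1)$ while fixing all off-diagonal entries. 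One also checks that $c\tilde{t}$ normalizes both $N_{l-1}$ and $H_m$ and preserves the character $\psi_m$.

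Next, I would run through the computation of $\Psi(W^f_m, \xi^{i,v}_s)$ and $\Psi(W^f_m, \tilde{\xi}^{i,v}_{1-s})$ exactly as in the proof of Theorem \ref{l-1 thm}, and the analogous computation for $\pi' = \pi^c$ with $f'$. For the non-intertwined integrals, the reduction to $W^f_m(t_{l-1}(a))$ together with the $c\tilde{t}$-invariance of $t_{l-1}(a)$ gives $\Psi(W^f_m,\xi^{i,v}_s) = \Psi(W^{f'}_m,\xi^{i,v}_s)$ without appeal to Lemma \ref{6.3}(1). For the intertwined integrals, the same analysis reduces both sides to integrals over $a$ and over $u$ with $j(u) \in U_m \cap N_{l-1}$ (by Lemma \ref{6.3}(3), which applies equally to $\pi$ and $\pi^c$). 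On this range, the $c\tilde{t}$-invariance of $t_{l-1}(a)\tilde{w}_{l-1}$, combined with the fact that $c\tilde{t}$ normalizes $U_m \cap N_{l-1}$ and preserves $\psi_m$, gives
$$
W^f_m\bigl(t_{l-1}(a)\tilde{w}_{l-1}\,c\tilde{t}\inv j(u)\tilde{t} c\bigr) = W^f_m\bigl(t_{l-1}(a)\tilde{w}_{l-1} j(u)\bigr),
$$
so $\Psi(W^f_m,\tilde{\xi}^{i,v}_{1-s}) = \Psi(W^{f'}_m,\tilde{\xi}^{i,v}_{1-s})$ as well.

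Combining, the functional equations
$$
\gamma(s,\pi\times\tau,\psi)\,\Psi(W^f_m,\xi^{i,v}_s) = \Psi(W^f_m,\tilde{\xi}^{i,v}_{1-s}), \quad \gamma(s,\pi^c\times\tau,\psi)\,\Psi(W^{f'}_m,\xi^{i,v}_s) = \Psi(W^{f'}_m,\tilde{\xi}^{i,v}_{1-s})
$$
force $\gamma(s,\pi\times\tau,\psi) = \gamma(s,\pi^c\times\tau,\psi)$, provided the zeta integrals are not identically zero in $s$ for some admissible choice of $v$, $i$, $m$, $f$ (this non-vanishing is standard, coming from the genericity of $\pi$ together with the Mellin-transform form of the non-intertwined integral). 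The only potential obstacle is the verification of the character- and set-invariance of $U_m \cap N_{l-1}$ under conjugation by $c\tilde{t}$, which is a direct diagonal computation as above; beyond this, the argument is a line-by-line reprise of Theorem \ref{l-1 thm}, so I expect no further difficulty.
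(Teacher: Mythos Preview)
Your approach is correct and is essentially the one the paper sketches: the paper's own proof first quotes Corollary~\ref{conj gamma1} to secure $\mathcal{C}(l-2)$ for the pair $(\pi,\pi^c)$, and then says ``the proof is similar to that of Corollary~\ref{conj gamma1}'', meaning one reruns the computation in Theorem~\ref{l-1 thm} with $\pi'=\pi^c$ and uses the identity $c\tilde{t}\inv t_{l-1}(a)\tilde{w}_{l-1}\tilde{t}c=t_{l-1}(a)\tilde{w}_{l-1}$ in place of the hypothesis on $\gamma$-factors. Your write-up supplies exactly these ingredients.

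One point to tighten: Lemma~\ref{6.3}(3) is a statement about $B_m(\cdot,f_{\tilde{w}_{l-1}})$, not about $W^f_m$ or $W^{f'}_m$ individually, so it does not ``reduce both sides'' of the intertwined integrals to $u\in V_{l-1,m}$ as you phrase it. What it does (once you have invoked Corollary~\ref{conj gamma1} to place yourself in the setting of Theorem~\ref{l-2 theorem}, which you should make explicit) is reduce the \emph{difference} $\Psi(W^f_m,\tilde\xi)-\Psi(W^{f'}_m,\tilde\xi)$ to an integral of $B_m(t_{l-1}(a)\tilde{w}_{l-1},f_{\tilde{w}_{l-1}})=(W^f_m-W^{f'}_m)(t_{l-1}(a)\tilde{w}_{l-1})$, which then vanishes by the $c\tilde{t}$-invariance you established. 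Alternatively, one can bypass Lemma~\ref{6.3}(3) entirely by observing that $c\tilde{t}=j(d)$ with $d=\mathrm{diag}(I_{l-1},-1,I_{l-1})\in\mathrm{O}_{2l-1}$ and that the specific sections $\xi^{i,v}_s$ and $\tilde\xi^{i,v}_{1-s}$ are invariant under conjugation by $d$; a single change of variables $g\mapsto dgd$ in the zeta integral then gives the equality directly. Either fix is routine, and with it your argument is complete.
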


\begin{proof}
By Corollary \ref{conj gamma1}, $\gamma(s, \pi\times\tau,\psi)=\gamma(s, \pi^c\times\tau,\psi)$ for all irreducible $\psi^{-1}$-generic representations $\tau$ of $\GL_{k}$ for $k\leq l-2.$ For $k=l-1$, the proof is similar to that of Corollary \ref{conj gamma1}.
\end{proof}

\subsection{\texorpdfstring{Twists up to $\GL_l$}{}}

We consider the twists up to $\GL_l$.
Recall from Equation \eqref{Bessel difference at l-2} that the condition $\mathcal{C}(l-2)$ implies, for any $g\in\SO_{2l},$
$$
B_m(g,f)-B_m(g,f')= B_m(g,f_{\tilde{w}_{l-1}}).
$$
Thus, we have a similar result for the conjugation of the partial Bessel functions 
$$
B_m^c(g,f)-B_m^c(g,f')= B_m^c(g,f_{\tilde{w}_{l-1}}).
$$

The following proposition describes the preimage of $Q_l w_l V_l$ with respect to the embedding of $\SO_{2l}$ into $\SO_{2l+1}.$ This preimage will be needed in computation of the zeta integral. 

\begin{prop}[{\cite[Proposition 7.2]{Haz23a}}]\label{l embed}
Let $w\in\mathrm{B}(\SO_{2l})$ and $$t=\mathrm{diag}(t_1,\dots,t_{l-1},\left(\begin{matrix}
a & b\rho \\
b & a
\end{matrix}\right),t_{l-1}\inv,\dots,t_1\inv) \in T_{\SO_{2l}}.$$ Then $tw\in Q_l w_l V_l$ (via the embedding of $\SO_{2l}$ into $\SO_{2l+1}$) if and only if we have $w\in\mathrm{B}_{l-1}(\SO_{2l})$ and $a\neq 1$. Moreover, if we write $w=t_{l-1}(w')\tilde{w}_{l-1}$ where $w'\in W(\GL_{l-1})$ and $tw=l_l(A)n_1 w_l n_2\in Q_l w_l V_l,$ then $A^*=\left(\begin{matrix}
& \frac{\gamma}{2}(1-a) \\
\mathrm{diag}(t_{l-1}\inv,\dots,t_{1}\inv)(w')^* &
\end{matrix}\right)$.
\end{prop}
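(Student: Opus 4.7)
The key observation is a Bruhat-type criterion: an element $g\in \SO_{2l+1}$ lies in $Q_l w_l V_l$ if and only if its bottom-left $l\times l$ block is invertible, in which case that block equals $A^*$. This follows by direct multiplication: if $g=l_l(A)n_1 w_l n_2$ with $n_1,n_2\in V_l$, then in the $l,1,l$-block decomposition, the $(3,1)$-block of $g$ simplifies to $A^* \cdot I_l = A^*$; conversely, invertibility of this block allows one to solve for $A, n_1, n_2$ successively. The problem therefore reduces to computing the bottom-left $l\times l$ block of $\iota(tw)\in \SO_{2l+1}$, where $\iota: \SO_{2l}\hookrightarrow \SO_{2l+1}$ is the embedding from \S\ref{gps and reps}, and analyzing its rank.

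Writing $w=t_n(w')\tilde{w}_n$ for some $n\leq l-1$ and $w'\in W(\GL_n)$, I would express $tw$ in the block structure $n,l-n-1,2,l-n-1,n$ of $\SO_{2l}$: this has $s_1^*(w')^*$ in the bottom-left $n\times n$ outer corner (where $s_1=\mathrm{diag}(t_1,\ldots,t_n)$), $s_2^*$ on the lower middle diagonal block (where $s_2=\mathrm{diag}(t_{n+1},\ldots,t_{l-1})$), and the matrix $T_0\begin{pmatrix} (-1)^n & 0\\ 0 & 1\end{pmatrix}$ at the central $2\times 2$ position, with $T_0=\begin{pmatrix} a & b\rho\\ b & a\end{pmatrix}$. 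The embedding $\iota$ inserts a unit row and column at position $l+1$ to give a $(2l+1)\times(2l+1)$ matrix $\tilde{g}$, then conjugates by $M=\mathrm{diag}(I_{l-1},M_0,I_{l-1})$; the conjugation only affects rows and columns at positions $l,l+1,l+2$.

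For $n\leq l-2$, the bottom-left $l\times l$ block of $\iota(tw)$ contains the rank-$n$ sub-block $s_1^*(w')^*$ in one corner, while all other entries outside of the last column are zero; the conjugation by $M$ contributes at most two non-zero entries to the last column (one from the $(3,1)$-entry of $M_0^{-1}VM_0$, and one from the top of $s_2^*$ after a right multiplication by $M_0$). The resulting matrix therefore has rank at most $n+1\leq l-1<l$, hence is singular, and $tw\notin Q_l w_l V_l$.

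For $n=l-1$ (where the middle diagonal blocks $s_2,s_2^*$ are empty), I would explicitly compute the $3\times 3$ conjugation $M_0^{-1}VM_0$ of the central block using $M_0^{-1}=\begin{pmatrix}0 & 1 & 1\\ 1 & 0 & 0\\ 0 & \gamma & -\gamma\end{pmatrix}$ and the identity $\rho=2\gamma$. A direct calculation yields the $(3,1)$-entry $\gamma(1-a)/2$. Combined with the $(l-1)\times(l-1)$ block $s^*(w')^*$ from the outer region (where $s^*=\mathrm{diag}(t_{l-1}^{-1},\ldots,t_1^{-1})$), the extra entries vanishing in this case since $s_2^*$ is empty, this produces
$$
A^*=\begin{pmatrix}0 & \gamma(1-a)/2\\ s^*(w')^* & 0\end{pmatrix}.
$$
Since $s^*(w')^*$ is always invertible, $A^*$ is invertible if and only if $a\neq 1$, establishing simultaneously the necessary and sufficient condition and the explicit formula. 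The main technical obstacle is the bookkeeping of the block structure through the embedding and conjugation, particularly the $3\times 3$ computation producing the distinguishing factor $\gamma(1-a)/2$ and the verification that the other central-block entries introduced by $M_0$ do not interfere with the singularity/invertibility analysis.
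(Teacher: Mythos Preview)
The paper does not prove this proposition here; it is quoted from \cite{Haz23a} (the finite-field companion paper), and the explicit block formulas for the embedding recorded immediately afterwards in \eqref{embedding}--\eqref{embedding details} are precisely the ingredients your computation uses. Your strategy---reducing membership in $Q_l w_l V_l$ to invertibility of the bottom-left $l\times l$ block of $j(tw)$, then reading that block off through the embedding---is correct and is the natural direct argument; in particular your identification of the $(3,1)$-entry of the conjugated central block as $\gamma(1-D_{1,1})/2=\gamma(1-a)/2$ matches the paper's \eqref{embedding details} exactly.

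One small bookkeeping slip: for $n\leq l-2$, the block $s_2^*$ does \emph{not} feed into the last column of the bottom-left $l\times l$ block. In $g=tw$ the block $s_2^*$ occupies columns $l+2$ through $2l-n$; after the extra column is inserted at position $l+1$ these shift to columns $l+3$ through $2l-n+1$ of $\tilde g$, all lying outside the range $\{l,l+1,l+2\}$ on which right multiplication by $M$ acts. Hence the last column of the bottom-left block carries only the single nonzero entry $\gamma(1-a)/2$ at the top, not two. This does not affect your conclusion---the first $l-1$ columns still have rank exactly $n$, so the total rank is at most $n+1\leq l-1$---but you should correct the attribution of that phantom second entry.
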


{ Our next goal is to define a compact subset which we need to average the zeta integrals over later. To this end, we first explicate the embedding of $\SO_{2l}$ into $\SO_{2l+1}$. Recall we embed $\SO_{2l}$ into $\SO_{2l+1}$ via
$$
g:=\left(\begin{matrix}
A & B \\
C & D
\end{matrix}\right)
\mapsto
M^{-1}
\left(\begin{matrix}
A & & B \\
 & 1 & \\
C & & D  
\end{matrix}\right)
M
$$
where $A, B, C,$ and $D$ are $l\times l$ matrices and
$$
M=\mathrm{diag}(I_{l-1},\left(\begin{matrix}
    0 & 1 & 0 \\
    \frac{1}{2} & 0 & \frac{1}{2\gamma} \\
    \frac{1}{2} & 0 & \frac{-1}{2\gamma} 
    \end{matrix}\right), I_{l-1}).
$$
For brevity, we set 
$$
\tilde{M}:=\left(\begin{matrix}
    0 & 1 & 0 \\
    \frac{1}{2} & 0 & \frac{1}{2\gamma} \\
    \frac{1}{2} & 0 & \frac{-1}{2\gamma} 
    \end{matrix}\right).
$$
We can further decompose the matrices $A,B,C,D$ as follows:
\begin{equation*}
A=\left(\begin{matrix}
A_{1,1} & A_{1,2} \\
A_{2,1} & A_{2,2}
\end{matrix}\right),   B=\left(\begin{matrix}
B_{1,1} & B_{1,2} \\
B_{2,1} & B_{2,2}
\end{matrix}\right), C=\left(\begin{matrix}
C_{1,1} & C_{1,2} \\
C_{2,1} & C_{2,2}
\end{matrix}\right), D=\left(\begin{matrix}
D_{1,1} & D_{1,2} \\
D_{2,1} & D_{2,2}
\end{matrix}\right),
\end{equation*}
where $A_{1,1},B_{1,2},C_{2,1},D_{2,2}$ are $(l-1)\times(l-1)$ matrices, $A_{1,2},B_{1,1},C_{2,2},D_{2,1}$ are $(l-1)\times 1$ matrices, $A_{2,1},B_{2,2},C_{1,1},D_{1,2}$ are $1\times(l-1)$ matrices, and $A_{2,2},B_{2,1},C_{1,2},D_{1,1}$ are $1\times 1$ matrices. Then the embedding of
$g$ in $\SO_{2l+1}$ is given by
\begin{equation}\label{embedding}
    \left(\begin{matrix}
    A_{1,1} & \left(\begin{matrix}
      A_{1,2} & 0 & B_{1,1}  
    \end{matrix}\right)\tilde{M} & B_{1,2} \\
    \tilde{M}\inv\left(\begin{matrix}
      A_{2,1} \\
      0 \\
      C_{1,1}
    \end{matrix}\right) & \tilde{M}\inv\left(\begin{matrix}
        A_{2,2} & 0 & B_{2,1} \\
        0 & 1 & 0 \\
        C_{1,2} & 0 & D_{1,1}
    \end{matrix}\right)\tilde{M} & \tilde{M}\inv\left(\begin{matrix}
        B_{2,2} \\
        0 \\
        D_{1,2}
    \end{matrix}\right) \\
    C_{2,1} & \left(\begin{matrix}
      C_{2,2} & 0 & D_{2,1}  
    \end{matrix}\right)\tilde{M} & D_{2,2} 
    \end{matrix}\right)
\end{equation}
Furthermore, we have
\begin{equation}\label{embedding details}
\begin{split}
    \left(\begin{matrix}
      A_{1,2} & 0 & B_{1,1}  \end{matrix}\right)\tilde{M}&=\left(\begin{matrix}
      \frac{B_{1,1}}{2} & A_{1,2} & \frac{-B_{1,1}}{2\gamma}  
    \end{matrix}\right), \\ \left(\begin{matrix}
      C_{2,2} & 0 & D_{2,1}  \end{matrix}\right)\tilde{M}&=\left(\begin{matrix}
      \frac{D_{2,1}}{2} & C_{2,2} & \frac{-D_{2,1}}{2\gamma}  
    \end{matrix}\right), \\
    \tilde{M}\inv\left(\begin{matrix}
      A_{2,1} \\
      0 \\
      C_{1,1}
    \end{matrix}\right)&=\left(\begin{matrix}
      C_{1,1} \\
      A_{2,1} \\
      -\gamma C_{1,1}
    \end{matrix}\right), \\ \tilde{M}\inv\left(\begin{matrix}
      B_{2,2} \\
      0 \\
      D_{1,2}
    \end{matrix}\right)&=\left(\begin{matrix}
      D_{1,2} \\
      B_{2,2} \\
      -\gamma D_{1,2}
    \end{matrix}\right), \\
    \tilde{M}\inv\left(\begin{matrix}
        A_{2,2} & 0 & B_{2,1} \\
        0 & 1 & 0 \\
        C_{1,2} & 0 & D_{1,1}
\end{matrix}\right)\tilde{M}&=\left(\begin{matrix}
        \frac{D_{1,1}+1}{2} & C_{1,2} & \frac{-D_{1,1}+1}{2\gamma} \\
        \frac{B_{2,1}}{2} & A_{2,2} & \frac{-B_{2,1}}{2\gamma} \\
        \frac{\gamma(1-D_{1,1})}{2} & -\gamma C_{1,2} & \frac{D_{1,1}+1}{2}
    \end{matrix}\right).
    \end{split}
\end{equation}

Let $K_m^{\GL_l}=I_{l}+Mat_{l\times l}(\mathfrak{p}^m)\subseteq\GL_l.$ We identify $K_m^{\GL_l}$ with its image under the isomorphism $l_l:\GL_l\cong L_l\subseteq \SO_{2l+1}.$ Also, let $K_m^{\SO_{2l}}=K_m^{\GL_{2l}}\cap\SO_{2l}$ and $K_m^{\SO_{2l+1}}=K_m^{\GL_{2l+1}}\cap\SO_{2l+1}.$ Let $K_m^{\GL_l}=K^- K^\circ K^+$ be the Iwahori decomposition of $K_m^{\GL_l}.$ That is, $K^\circ=K_m^{\GL_l}\cap T_{\GL_l},$ $K^+=K_m^{\GL_l}\cap U_{\GL_l}$, and $K^-=K_m^{\GL_l}\cap \overline{U}_{\GL_l}.$ We let \[K_m'=\{k\in K_m^{\SO_{2l}} \ | \ j(k)\in (Q_l \cap K_m^{\SO_{2l+1}})\cup(\overline{Q}_l \cap K_m^{\SO_{2l+1}})\}.\]
We show that $j(K_m')$ almost covers $K_m^{\GL_l}$ up to elements in the unipotent radical. 

\begin{lemma}\label{lem compact gen}
    Given $k'\in K^+$ or $K^-$, respectively,  there exists $k\in K_m'$ such that $j(k)=l_l(k')v,$ where $v\in V_l\cap K_m^{\SO_{2l+1}}$ or $\overline{V}_l\cap K_m^{\SO_{2l+1}},$ respectively. Furthermore, for $a\in K_m^{\GL_{l-1}},$ we have that $l_l(\mathrm{diag}(a,1))\in K_m'$ and $j(l_l(\mathrm{diag}(a,1)))=l_l(\mathrm{diag}(a,1)).$
\end{lemma}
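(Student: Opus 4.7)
The plan is to construct the lifts explicitly, reducing the general problem to two matrix computations.

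For the second assertion I would proceed by direct verification. Setting $k := \mathrm{diag}(a, I_2, a^*) \in \SO_{2l}$, in the notation of Equation \eqref{embedding} we have $A_{1,1}=a$, $A_{2,2}=D_{1,1}=1$, $D_{2,2}=a^*$, with all other blocks zero. Substituting these into \eqref{embedding} and \eqref{embedding details} collapses the conjugation by $\tilde M$ in the middle block since the central $3\times 3$ matrix becomes $I_3$, and one obtains $j(k)=\mathrm{diag}(a,I_3,a^*)=l_l(\mathrm{diag}(a,1))$. This lies in $L_l$ and, when $a \in K_m^{\GL_{l-1}}$, in $K_m^{\SO_{2l+1}}$; together with $k \in K_m^{\SO_{2l}}$ this shows $k \in K_m'$.

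For the first assertion I would focus on $K^+$, the $K^-$ case being symmetric (with $\overline Q_l$ and $\overline V_l$ in place of $Q_l$ and $V_l$). Using the Iwahori decomposition of $K^+$, factor $k' = \mathrm{diag}(b,1) \cdot u$, where $b \in K_m^{\GL_{l-1}} \cap U_{\GL_{l-1}}$ and $u = I_l + \sum_{i=1}^{l-1} y_i E_{i,l}$ with $y_i \in \mathfrak{p}^m$. The first factor is lifted by the second assertion. For $u$, since $\{I_l + \sum_i z_i E_{i,l}\}$ is an abelian subgroup of $U_{\GL_l}$, it suffices to lift each elementary factor $I_l + y_i E_{i,l}$ separately.

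The base case, which is the main technical step, is $I_l + y E_{l-1,l}$. For this I would take $k_{l-1} := \mathrm{\mathbf{x}}_{\alpha_{l-1}}(2y)$. A direct substitution into \eqref{embedding} and \eqref{embedding details} (using $\gamma=\rho/2$) shows that $j(k_{l-1}) \in Q_l \cap K_m^{\SO_{2l+1}}$, that its Levi projection equals $l_l(I_l + y E_{l-1,l})$, and that the resulting $V_l$-component is the explicit element $I_{2l+1} - (2y/\rho) E_{l-1,l+2} + (2y/\rho) E_{l,l+3}$, which is plainly in $V_l \cap K_m^{\SO_{2l+1}}$. For $i<l-1$ I would derive the lift by conjugation: let $\sigma_{i,l-1} \in \SO_{2l}$ be the image under the second assertion of the permutation matrix in $\GL_{l-1}$ swapping coordinates $i$ and $l-1$, and set $k_i := \sigma_{i,l-1}\, \mathrm{\mathbf{x}}_{\alpha_{l-1}}(2y_i)\, \sigma_{i,l-1}^{-1}$. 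Since $\sigma_{i,l-1}$ is a fixed integral matrix, $k_i - I_{2l}$ still has entries in $\mathfrak{p}^m$, so $k_i \in K_m^{\SO_{2l}}$; on the $\SO_{2l+1}$ side the conjugation carries the Levi projection $I_l + y_i E_{l-1,l}$ to $I_l + y_i E_{i,l}$ and preserves $V_l \cap K_m^{\SO_{2l+1}}$, yielding $j(k_i) = l_l(I_l + y_i E_{i,l})\, v_i$ with $v_i \in V_l \cap K_m^{\SO_{2l+1}}$. Setting $k := \mathrm{diag}(b, I_2, b^*) \cdot \prod_{i=1}^{l-1} k_i \in K_m^{\SO_{2l}}$ and using that $L_l \cap K_m^{\SO_{2l+1}}$ normalizes $V_l \cap K_m^{\SO_{2l+1}}$, the product $j(k)$ rearranges to $l_l(k')\, v$ for some $v \in V_l \cap K_m^{\SO_{2l+1}}$, so $k \in K_m'$. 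The principal obstacle is the base-case computation for $\mathrm{\mathbf{x}}_{\alpha_{l-1}}(2y)$: the nontrivial action of $\tilde M$ and $\tilde M^{-1}$ in the three middle rows and columns of \eqref{embedding} interacts with the off-diagonal $\rho^{-1}$ entries of $\mathrm{\mathbf{x}}_{\alpha_{l-1}}$ in a way that must be tracked carefully; once that calculation is in hand, the remainder is routine bookkeeping.
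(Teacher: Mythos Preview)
Your argument is correct, and both the second assertion and the base case are handled exactly as in the paper: your lift $k_{l-1}=\mathrm{\mathbf{x}}_{\alpha_{l-1}}(2y)$ is precisely the paper's element $g$ specialized to $x=(0,\dots,0,y)^T$, and your explicit formula for the $V_l$-component checks out.

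The genuine difference is in how you pass from the base case to a general last-column unipotent $u=I_l+\sum_i y_iE_{i,l}$. You factor $u$ into elementary pieces and for $i<l-1$ conjugate the base lift by the permutation $\sigma_{i,l-1}=\mathrm{diag}(p,I_2,p^*)$; this works because such permutations are integral, so conjugation preserves $K_m^{\SO_{2l}}$, and on the $\SO_{2l+1}$ side $j(\sigma_{i,l-1})=l_l(\mathrm{diag}(p,1))$ normalizes $V_l\cap K_m^{\SO_{2l+1}}$. The paper instead writes down a single closed-form lift of the full $u$ at once, namely
\[
g=\begin{pmatrix} I_{l-1} & & 2x & W \\ & 1 & & \\ & & 1 & \frac{1}{\gamma}{}^txJ_{l-1} \\ & & & I_{l-1}\end{pmatrix},
\qquad W=(w_{i,j}),\ w_{i,j}=\tfrac{2}{\rho}x_ix_{l-j},
\]
and verifies $j(g)=l_l(u)v$ with $v\in V_l\cap K_m^{\SO_{2l+1}}$ in one stroke. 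The paper's route is shorter and makes the quadratic correction $W$ (which is forced in the quasi-split case, as the paper remarks) visible in closed form; your route trades that for an extra layer of conjugation bookkeeping but has the virtue that the only nontrivial computation is the single $\alpha_{l-1}$ root-subgroup check you already did. Either way the proof goes through.
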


\begin{proof}
    Let $a\in K_m^{\GL_{l-1}}.$ Then, $\mathrm{diag}(a,1,1,a^*)\in K_m'$ and \[j(\mathrm{diag}(a,1,1,a^*))=l_l(\mathrm{diag}(a,1)).\] Next, let
    \[
    u=\left(
    \begin{matrix}
        I_{l-1} & x \\
        0 & 1
    \end{matrix}
    \right)\in K_m^{\GL_l}
    \]
    and write
    \[
    x=\begin{bmatrix}
        x_1 \\
        x_2 \\
        \vdots \\
        x_{l-1}
    \end{bmatrix}.
    \]
    Let $W=(w_{i,j})_{i,j=1}^{l-1}$ be the $(l-1)\times(l-1)$ matrix defined by $w_{i,j}=\frac{2}{\rho}x_ix_{l-j}.$
    Let $g\in\SO_{2l}$ be decomposed as above. We let $A=I_{l}, B_{1,1}=2x, B_{2,1}=W, D_{1,1}=1, D_{1,2}=\frac{1}{\gamma}{}^txJ_{l-1}, D_{2,2}=I_{l-1}$ and $0$ elsewhere. That is,
    \[
    g=\begin{bmatrix}
        I_{l-1} & & 2x & W \\
         & 1 & & \\
         & & 1 & \frac{1}{\gamma}{}^txJ_{l-1} \\
         & & & I_{l-1}
    \end{bmatrix}.
    \]
    It is verified by direct computation that $g\in\SO_{2l}$. Furthermore $j(g)\in Q_l=L_lV_l$ and examining the decomposition, we have $j(g)=l_l(u)v$ where $v\in V_l\cap K_m^{\SO_{2l+1}}.$  Since the products $au$ generate a set larger than $K^+,$ the claim follows (note that $L_l\cap K_m^{\SO_{2l+1}}$ normalizes $V_l\cap K_m^{\SO_{2l+1}}$). Next we consider ${}^tu\in K_m^{\GL_l}.$ It is straightforward to check that $j({}^tg)=l_l({}^tu)v'$ where $v'\in\overline{V}_l\cap K_m^{\SO_{2l+1}}.$ Similarly as before, the products $a{}^tu$ generate a set larger than $K^-$ and again, the claim follows.
\end{proof}

We remark that the previous lemma has an analogue in the split case (\cite[Lemma 6.12]{HL22b}) and the strategy of the proof is similar. However, the computation here is a generalization of the split case. Indeed, for simplicity consider $l=2$, i.e., $\SO_4(J_{2l,\rho})$. Let $g$ be as in the proof above but change $W$ to be $0$. Then $g\in\SO_4(J_{2l,\rho})$ if and only if $\rho\in (F^\times)^2$ or equivalently $\SO_4(J_{2l,\rho})$ is split. So, in the split case we may take $W=0$ for a simpler computation. This shortcut is implicitly used in the proof of \cite[Lemma 6.12]{HL22b}. However, we cannot take advantage of the shortcut in the quasi-split case. In particular, the proof of Lemma \ref{lem compact gen} above works for both split and quasi-split $\SO_{2l}(J_{2l,\rho}).$
}

The next theorem shows that $B_m(g,f)-B_m(g,f')+B_m^c(g,f)-B_m^c(g,f')=0$ for $m$ large enough.

\begin{thm}\label{l thm}
The condition $\mathcal{C}(l)$ implies
$$
B_m(g,f_{\tilde{w}_{l-1}})+B_m^c(g,f_{\tilde{w}_{l-1}})=0,
$$
for any $g\in\SO_{2l}$ and $m$ large enough depending only on $f$ and $f'.$
\end{thm}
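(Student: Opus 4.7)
The plan is to parallel the proofs of Theorems \ref{l-2 theorem} and \ref{l-1 thm}, now applying the twist by $\GL_l$ via the embedding $j:\SO_{2l}\hookrightarrow \SO_{2l+1}$ from Section \ref{gps and reps}. Fix an irreducible $\psi^{-1}$-generic representation $(\tau,V_\tau)$ of $\GL_l$ and a vector $v\in V_\tau$, and form the sections $\xi_s^{i,v}\in I(\tau,s,\psi^{-1})$ and $\tilde{\xi}_{1-s}^{i,v}=M(\tau,s,\psi^{-1})\xi_s^{i,v}$ from Section \ref{sections}, with $i$ sufficiently large (depending on $v$ and the support of an appropriate compact subset of $V_l$).

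First I will compute the non-intertwined zeta integral $\Psi(W_m^f,\xi_s^{i,v})$. Since $\xi_s^{i,v}(\cdot,I_l)$ is supported on $Q_l\overline{V}_{l,i}$, after translation by $w_{l,l}$ and passage through the embedding, the integration localizes via Lemma \ref{lem compact gen} and the explicit formulas \eqref{embedding}--\eqref{embedding details} to Levi elements of the form $t_{l-1}(a)$. Applying Lemma \ref{6.3}(1) gives $W_m^f(t_{l-1}(a))=W_m^{f'}(t_{l-1}(a))$ and hence $\Psi(W_m^f,\xi_s^{i,v})=\Psi(W_m^{f'},\xi_s^{i,v})$. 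Combined with $\mathcal{C}(l)$ and the local functional equation, this forces $\Psi(W_m^f,\tilde{\xi}_{1-s}^{i,v})=\Psi(W_m^{f'},\tilde{\xi}_{1-s}^{i,v})$.

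Next I will compute the intertwined side. By Proposition \ref{l embed}, $\tilde{\xi}_{1-s}^{i,v}$ localizes the integrand to elements of the form $tw$ with $w\in\mathrm{B}_{l-1}(\SO_{2l})$ and torus parameter $a\neq 1$. Using Theorem \ref{l-2 theorem} and Equation \eqref{Bessel difference at l-2}, the difference $W_m^f-W_m^{f'}$ can be replaced by $B_m(\cdot,f_{\tilde{w}_{l-1}})$, and Lemma \ref{6.3}(3) together with Proposition \ref{partialBesselprop} collapses the $N_{l-1}$- and $U_m$-integrations, producing an identity of the form
\[
0=\int_{U_{\GL_l}\backslash\GL_l} B_m(t_{l-1}(A^*)\tilde{w}_{l-1},f_{\tilde{w}_{l-1}})\,\Phi(A)\,|\det A|^{\frac{1}{2}-s-l}\,W_v^*(A)\,dA,
\]
where $A^*$ is the datum provided by Proposition \ref{l embed} and $\Phi$ collects the Jacobian and other residual factors. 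The crucial observation is that $A^*$ depends on the torus through $a$ only, while the constraint $a^2-b^2\rho=1$ is symmetric in $b$; consequently, the integration naturally receives contributions from both the torus element $t$ and its image $c\tilde{t}^{-1}t\tilde{t}c$ under the $b\mapsto -b$ symmetry. Pairing these two contributions and invoking Lemma \ref{lemma conj bessel} converts the second into $B_m^c(t_{l-1}(A^*)\tilde{w}_{l-1},f_{\tilde{w}_{l-1}})$, yielding
\[
0=\int_{U_{\GL_l}\backslash\GL_l}\bigl(B_m+B_m^c\bigr)(t_{l-1}(A^*)\tilde{w}_{l-1},f_{\tilde{w}_{l-1}})\,\Phi(A)\,|\det A|^{\frac{1}{2}-s-l}\,W_v^*(A)\,dA.
\]
Proposition \ref{JS Prop} then gives the desired vanishing on the relevant torus elements, and an argument modeled on the tail of Theorem \ref{l-2 theorem} (using Theorem \ref{CST}, Proposition \ref{Besselprtnon}, and a partition of unity) will extend the vanishing to all $g\in\SO_{2l}$.

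The main obstacle will be making the $b\mapsto -b$ symmetrization precise and checking that it produces the $B_m^c$ term exactly, since the embedding $j$ mixes the $a$- and $b$-coordinates nontrivially through the $\tilde{M}$-conjugation of \eqref{embedding}--\eqref{embedding details}. A closely related difficulty, flagged in the introduction as the main technical subtlety in the even orthogonal case, is ensuring the smoothness of the integrand arising from the $A^*$-parameterization; this requires extending Lemma \ref{lem compact gen} to control the interaction of $j$ with $K_m^{\SO_{2l+1}}$ carefully enough that the expression $B_m(t_{l-1}(A^*)\tilde{w}_{l-1},f_{\tilde{w}_{l-1}})$ defines a smooth function of $A$ to which Proposition \ref{JS Prop} applies. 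I expect these two issues, in direct analogy with \cite[Theorem 6.13]{HL22b}, to absorb the bulk of the technical work.
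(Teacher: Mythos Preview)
Your overall strategy matches the paper's: compute the non-intertwined side on Levi elements $t_{l-1}(a)$ via Lemma \ref{6.3}(1), pass to the intertwined side via $\mathcal{C}(l)$, localize through Proposition \ref{l embed}, and use the two-to-one nature of $tw\mapsto A$ (the $b\mapsto -b$ symmetry) together with Lemma \ref{lemma conj bessel} to pair contributions into $B_m+B_m^c$, then apply Proposition \ref{JS Prop}. The smoothness issue you flag is indeed handled in the paper by averaging over the subgroup $K_m'$ of Lemma \ref{lem compact gen}.

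Two points need correction. First, your endgame is misconceived: once Proposition \ref{JS Prop} gives $(B_m+B_m^c)(twu,f_{\tilde w_{l-1}})=0$ for $t$ ranging over the non-split torus part, $w\in\mathrm{B}_{l-1}(\SO_{2l})$, and $u\in U_{\SO_{2l}}$, you are finished by Bruhat decomposition, since $f_{\tilde w_{l-1}}$ is already supported on $\Omega_{\tilde w_{l-1}}$ and the relevant Bessel-support Weyl elements there are exactly $\mathrm{B}_{l-1}(\SO_{2l})$. There is no Cogdell--Shahidi--Tsai step; indeed Theorem \ref{CST} applies to a single $B_m(\cdot,f)$, not to the sum $B_m+B_m^c$, and there is no $f''$ with $B_m(\cdot,f'')=B_m(\cdot,f_{\tilde w_{l-1}})+B_m^c(\cdot,f_{\tilde w_{l-1}})$ in general.

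Second, you omit the treatment of the degenerate torus locus $a=-1$ (where $b=0$, so $t=ctc$ and the pairing collapses). Proposition \ref{l embed} does place these elements in $Q_lw_lV_l$, so they do contribute to the integral, but your symmetrization does not apply to them. The paper handles this separately: for $a=-1$ one has $t=(-I_{2l})t'$ with $t'$ in the split torus, and Theorem \ref{l-1 thm} (the $\GL_{l-1}$ twist) already gives $B_m(t'\,t_{l-1}(A)\tilde w_{l-1},f_{\tilde w_{l-1}})=0$ directly. You must invoke Theorem \ref{l-1 thm} here before restricting to the set $T_l$ on which the two-to-one map is genuinely two-to-one. (Relatedly, your notation $B_m(t_{l-1}(A^*)\tilde w_{l-1},\ldots)$ is off: $A^*\in\GL_l$, not $\GL_{l-1}$, and the argument of $B_m$ must be the $\SO_{2l}$-element $tw$ with $t$ in the full torus $T_{\SO_{2l}}$, not the split part. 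The map $\zeta:t w\mapsto A$ is what transfers the integral to $\GL_l$.)
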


\begin{proof}
Let $m$ be large enough such that Theorem \ref{l-1 thm} and Lemma \ref{6.3} hold. Let $(\tau, V_\tau)$ be an irreducible $\psi\inv$-generic representation of $\GL_{l}$ and $v\in V_\tau.$ We consider the embedding of $\SO_{2l}$ into $\SO_{2l+1}$ as in $\S$\ref{gps and reps}. Let $j:\SO_{2l}\rightarrow\SO_{2l+1}$ denote the embedding. Recall that $Q_{l}=L_{l} V_{l}$ is the standard Siegel parabolic subgroup of $\SO_{2l+1}$ with Levi subgroup $L_{l}\cong\GL_{l}.$ Also, $P_{l-1}=M_{l-1} N_{l-1}$ is the standard parabolic subgroup of $\SO_{2l}$ with Levi subgroup $M_{l-1}\cong\GL_{l-1}\times\SO_2.$ Let $\ol{V}_{{l},m}=\ol{V}_{l} \cap H_m^{\SO_{2l+1}}$ and $\ol{N}_{l-1,m}=\{u\in \ol{N}_{l-1} \, |  j(u) \in \ol{V}_{l,m}\}$ where $\ol{N}_{l-1}$ denotes the opposite unipotent radical to $N_{l-1}.$ Then $j(\ol{N}_{{l-1},m})$ is an open compact subgroup of $\ol{V}_{l}.$ Let $i$ be a positive integer with $i\geq\mathrm{max}\{m,i_0(v),i(j(N_{{l-1},m}),v)\}$ (for notation, see Lemma \ref{5.3} for $i_0(v)$ and Lemma \ref{5.4} for $i(j(N_{{l-1},m}),v)$ where we take $D=j(N_{{l-1},m})$). We consider the sections $\xi_s^{i,v}\in I(\tau,s,\psi\inv)$ and $\tilde{\xi}_{1-s}^{i,v}=M(\tau,s,\psi\inv)\xi_s^{i,v}\in I(\tau^*,1-s,\psi\inv)$ defined in $\S\ref{sections}.$

First, we compute the non-intertwined zeta integrals $\Psi(W_m^f,\xi_s^{i,v}).$ We have that $N_{l-1} M_{l-1} \ol{N}_{l-1}$ is dense in $\SO_{2l}$. Hence, we can replace the integral over $U_{\SO_{2l}}\backslash\SO_{2l}$ with an integral over $U_{\SO_{2l}}\backslash N_{l-1} M_{l-1} \ol{N}_{l-1}\cong U_{\GL_{l-1}}\backslash\GL_{l-1} \times \ol{N}_{l-1}$ where $\GL_{l-1}$ is realized in $M_{l-1}\subseteq\SO_{2l}.$ Note for $g=u_1 m_{l-1}(a) \ol{u}_2\in U_{\GL_{l-1}}\backslash\GL_{l-1} \times \ol{N}_{l-1}$ we use the quotient Haar measure $dg=|\mathrm{det}(a)|^{-{(l-1)}} d\ol{u}_2 da.$

We have
$$
\Psi(W_m^f,\xi_s^{i,v})=\int_{U_{\GL_{l-1}}\backslash\GL_{l-1}}\int_{\ol{N}_{l-1}} W_m^f(  m_{l-1}(a)\ol{u}) \xi_s^{i,v}(w_{l,l}j(m_{l-1}(a)\ol{u}), I_{l})|\mathrm{det}(a)|^{-{(l-1)}}d\ol{u}da.
$$
By Equation \eqref{xi nonintertwined}, we have
$$
\Psi(W_m^f,\xi_s^{i,v})=\int_{U_{\GL_{l-1}}\backslash\GL_{l-1}}\int_{\ol{N}_{l-1,i}} W_m^f(  m_{l-1}(a)\ol{u}) \xi_s^{i,v}(w_{l,l}j(m_{l-1}(a)\ol{u}), I_{l})|\mathrm{det}(a)|^{-{(l-1)}}d\ol{u}da.
$$
Furthermore, for $\ol{u}\in\ol{N}_{{l-1},i}$, we have $\ol{u} \in H_m$. By Proposition \ref{partialBesselprop},
$$
W^f_m(g j(\ol{u}))= W_m^f(g),
$$
for any $g\in\SO_{2l}.$ From this and Equation \eqref{xi nonintertwined}, we have
$$
\Psi(W_m^f,\xi_s^{i,v})=\mathrm{Vol}(\ol{N}_{l-1,i})\int_{U_{\GL_{l-1}}\backslash\GL_{l-1}} W_m^f(  m_{l-1}(a)) \xi_s^{i,v}(w_{l,l}j(m_{l-1}(a)), I_{l})|\mathrm{det}(a)|^{-{(l-1)}}da.
$$
By Lemma \ref{6.3}(1),
$$
W_m^f( m_{l-1}(a)) )=W_m^{f'}( m_{l-1}(a)),
$$
and hence 
$$
\Psi(W_m^f,\xi_s^{i,v})=\Psi(W_m^{f'},\xi_s^{i,v}).
$$

By assumption, $\gamma(s,\pi\times\tau,\psi)=\gamma(s,\pi'\times\tau,\psi).$ Since $\Psi(W_m^f,\xi_s^{i,v})=\Psi(W_m^{f'},\xi_s^{i,v}),$ the local functional equation gives
$$\Psi(W_m^f,\tilde{\xi}_{1-s}^{i,v})=\Psi(W_m^{f'},\tilde{\xi}_{1-s}^{i,v}).$$
By Theorem \ref{l-2 theorem}, $$
B_m(g,f)-B_m(g,f')= B_m(g,f_{\tilde{w}_{l-1}}),
$$
for any $g\in\SO_{2l}.$ $B_m(g,f_{\tilde{w}_{l-1}})$ is supported on $\Omega_{\tilde{w}_{l-1}}$ and hence
\begin{align*}
0=\Psi(W_m^f,\tilde{\xi}_{1-s}^{i,v})-\Psi(W_m^{f'},\tilde{\xi}_{1-s}^{i,v})=\int_{U_{\SO_{2l}}\backslash \Omega_{\tilde{w}_{l-1}}} B_m(g,f_{\tilde{w}_{l-1}}) \tilde{\xi}_{1-s}^{i,v}(w_{l,l}j(g), I_{l})dg.
\end{align*}

We have $U_{\SO_{2l}}\backslash \Omega_{\tilde{w}_{l-1}}=\sqcup_{w\in\mathrm{B}_{l-1}(\SO_{2l})} T_{\SO_{2l}} w U_{\SO_{2l}}.$  Let \[twu\in \sqcup_{w\in\mathrm{B}_{l-1}(\SO_{2l})} T_{\SO_{2l}} w U_{\SO_{2l}}.\] By Proposition \ref{l embed}, we have  $j(twu)\in Q_l w_l V_l$ if and only if $$t=\mathrm{diag}(t_1,\dots,t_{l-1},\left(\begin{matrix}
a & b\rho \\
b & a
\end{matrix}\right),t_{l-1}\inv,\dots,t_1\inv) \in T_{\SO_{2l}}.$$ where $a\neq 1.$ Thus, we can restrict the torus $T_{\SO_{2l}}$ to the subset
$$T_l=\{t=\mathrm{diag}(t_1,\dots,t_{l-1},\left(\begin{matrix}
a & b\rho \\
b & a
\end{matrix}\right),t_{l-1}\inv,\dots,t_1\inv)\in T_{\SO_{2l}} \, | \, a\neq \pm 1\}.$$ Note that the case $a=-1$ is excluded in this set. Indeed, while $a=1$ is excluded by Proposition \ref{l embed}, we show that it follows from Theorem \ref{l-1 thm} that the case $a=-1$ does not contribute to integral.

Assume for the moment that $a=-1.$ Then $b=0$ and $t=-I_{2l}t'$ where $t'=\mathrm{diag}(-t_1,\dots,-t_{l-1},1,1,-t_{l-1}\inv,\dots,-t_1\inv).$ Let $w\in\mathrm{B}_{l-1}(\SO_{2l}).$ Then there exists $w'\in W(\GL_{l-1})$ such that $w=t_{l-1}(w')\tilde{w}_{l-1}$. Let $u''\in U_{\SO_{2l}}.$ Then there exists $u'\in U_{\GL_{l-1}}$ and $u\in N_{l-1}$ such that $u''=t_{l-1}(u')u.$ Since $\tilde{w}_{l-1}t_{l-1}(u')=t_{l-1}((u')^*)\tilde{w}_{l-1}$, we have $twu=t't_{l-1}(A)\tilde{w}_{l-1}u$ for some $A\in\GL_{l-1}$ and $u\in N_{l-1}.$ Thus 
$B_m(twu,f_{\tilde{w}_{l-1}})=\omega_\pi(-I_{2l})B_m(t't_{l-1}(A)\tilde{w}_{l-1}u,f_{\tilde{w}_{l-1}}),$ where $\omega_\pi$ is the central character of $\pi.$ By Lemma \ref{6.3}(3), if $u\in N_{l-1}\backslash(U_m \cap N_{l-1})$, then $B_m(t't_{l-1}(A)\tilde{w}_{l-1}u,f_{\tilde{w}_{l-1}})=0.$ If $u\in U_m\cap N_{l-1}$, by Lemma \ref{partialBesselprop}, it follows that \begin{align*}
    B_m(t't_{l-1}(A)\tilde{w}_{l-1}u,f_{\tilde{w}_{l-1}})=\psi_m(u)B_m(t't_{l-1}(A)\tilde{w}_{l-1},f_{\tilde{w}_{l-1}})=0,
\end{align*} where the last equality follows from Theorem \ref{l-1 thm}. Hence, we have shown that, for any $u\in U_{SO_{2l}},$  $B_m(twu,f_{\tilde{w}_{l-1}})=0$. Therefore,
\begin{align*}
0&=\int_{U_{\SO_{2l}}\backslash \Omega_{\tilde{w}_{l-1}}} B_m(g,f_{\tilde{w}_{l-1}}) \tilde{\xi}_{1-s}^{i,v}(w_{l,l}j(g), I_{l})dg \\
&=\sum_{w\in B_{l-1}(\SO_{2l})}\int_{T_l\times U_{\SO_{2l}}} B_m(g,f_{\tilde{w}_{l-1}}) \tilde{\xi}_{1-s}^{i,v}(w_{l,l}j(g), I_{l})dtdu.
\end{align*}

For $ w\in \mathrm{B}_{l-1}(\SO_{2l})$, there exists $w'\in W(\GL_{l-1})$ such that $w=t_{l-1}(w')\tilde{w}_{l-1}.$ Then, for $t\in T_l$, it follows from  Proposition \ref{l embed} that $w_{l,l}tw=l_l(\gamma A)w_lx$ where $A\in\GL_l$ and $x\in V_l$ with
 $$A=\left(\begin{matrix}
& \frac{\gamma}{2}(1-a) \\
\mathrm{diag}(t_{l-1}\inv,\dots,t_{1}\inv)(w')^* &
\end{matrix}\right)^*.$$ This matrix $A$ will be involved in computing $\tilde{\xi}_{1-s}^{i,v}(w_{l,l}j(g), I_{l})$ in the above integral. Indeed if we let $u=I_{2l}$, then this is readily seen from Equation \eqref{xi intertwined} (more generally, $u$ will also play a nontrivial role, but its discussion is more technical). 

Note that the assignment $tw\mapsto A$ is a two-to-one map. Indeed, recall that for $$t=\mathrm{diag}(t_1,\dots,t_{l-1},\left(\begin{matrix}
a & b\rho \\
b & a
\end{matrix}\right),t_{l-1}\inv,\dots,t_1\inv)\in T_{\SO_{2l}},$$
we have
$$ctc=\mathrm{diag}(t_1,\dots,t_{l-1},\left(\begin{matrix}
a & -b\rho \\
-b & a
\end{matrix}\right),t_{l-1}\inv,\dots,t_1\inv)\in T_{\SO_{2l}}.$$
Since $t\in T_l,$ it follows that $b\neq 0$ and hence $t\neq ctc$ (in fact, $T_l$ is precisely the subset of $T_{\SO_{2l}}$ which is not fixed by conjugation by $c$). However, both $tw$ and $ctcw$ result in the same $A.$ To apply Proposition \ref{JS Prop}, we need a one-to-one map. Hence, we partition $T_l$ into two sets $T_1$ and $T_2$ with the property that $t\in T_1$ if and only if $ctc\in T_2.$ 

We have
\begin{align}\label{eqn T_1+T_2}
\begin{split}
    0&=\sum_{w\in B_{l-1}(\SO_{2l})}\int_{T_l\times U_{\SO_{2l}}} B_m(g,f_{\tilde{w}_{l-1}}) \tilde{\xi}_{1-s}^{i,v}(w_{l,l}j(g), I_{l})dtdu \\
    &=\sum_{w\in B_{l-1}(\SO_{2l})}\int_{T_1\times U_{\SO_{2l}}} B_m(g,f_{\tilde{w}_{l-1}}) \tilde{\xi}_{1-s}^{i,v}(w_{l,l}j(g), I_{l})dtdu \\
    &+\sum_{w\in B_{l-1}(\SO_{2l})}\int_{T_2\times U_{\SO_{2l}}} B_m(g,f_{\tilde{w}_{l-1}}) \tilde{\xi}_{1-s}^{i,v}(w_{l,l}j(g), I_{l})dtdu.
    \end{split}
\end{align}

Let $t\in T_2, w\in\mathrm{B}_{l-1}(\SO_{2l}),$ $u\in U_{\SO_{2l}},$ and  $t'=ctc\in T_1.$ By Lemma \ref{lemma conj bessel},
\begin{align*}B_m(twu,f_{\tilde{w}_{l-1}})=B_m(ct'cwu,f_{\tilde{w}_{l-1}})=B_m(ct'cwu,f_{\tilde{w}_{l-1}})&=B_m(c\tilde{t}\inv t'wu'\tilde{t}c,f_{\tilde{w}_{l-1}}) \\
&=B_m^c( t'wu',f_{\tilde{w}_{l-1}}),
\end{align*}
 where $u'=\tilde{t}cuc\tilde{t}\inv\in U_{\SO_{2l}}.$
 Write $u=(u_{i,j})_{i,j=1}^l.$ Note that $u_{l,l+1}=0.$ Then
 $$
u'=\left(\begin{matrix}
(u_{i,j})_{i,j=1}^{i,j=l-1} & (-u_{i,l})_{i=1}^{l-1}  & (u_{i,l+1})_{i=1}^{l-1} & * \\
 & 1 &  & * \\
 & & 1 & * \\
 & & & *
\end{matrix}\right).
$$
 Also, the embedding of $u$ in $\SO_{2l+1}$ is
$$
j(u)=\left(\begin{matrix}
(u_{i,j})_{i,j=1}^{i,j=l-1} & \left(\begin{matrix}
\frac{(u_{i,l+1})_{i=1}^{l-1}}{2} & * & * \end{matrix}\right) & * \\
 & I_3 & * \\
 & & *
\end{matrix}\right).
$$
Let $\tilde{u}=\left(\begin{matrix}
(u_{i,j})_{i,j=1}^{i,j=l-1} &
\frac{(u_{i,l+1})_{i=1}^{l-1}}{2} \\
 & 1
\end{matrix}\right).$ Then $j(u)=l_l(\tilde{u})n_3.$ where $n_3\in V_l.$ The embedding takes $twu$ to $ n_4 l_l(\gamma A \tilde{u}^*) w_l n_5$ for some $n_4, n_5\in V_l.$ Thus, by Equation \eqref{5.4}, $$\tilde{\xi}_{1-s}^{i,v}(w_{l,l} j(twu), I_l)=\mathrm{Vol}(\ol{V}_{l,i})|\mathrm{det}(\frac{1}{2}A)|^{\frac{1}{2}-s}W_v^*(\mathrm{diag}(\frac{1}{2},\dots,\frac{1}{2})A \tilde{u}^*).$$ Also, the embedding takes $t'wu'$ to $ n_4' l_l(\gamma A \tilde{u}^*) w_l n_5'$ for some $n_4', n_5'\in V_l.$ Again, by Equation \eqref{5.4}, $$\tilde{\xi}_{1-s}^{i,v}(w_{l,l} j(t'wu'), I_l)=\mathrm{Vol}(\ol{V}_{l,i})|\mathrm{det}(\frac{1}{2}A)|^{\frac{1}{2}-s}W_v^*(\mathrm{diag}(\frac{1}{2},\dots,\frac{1}{2})A \tilde{u}^*).$$
Thus it follows that
$$
\tilde{\xi}_{1-s}^{i,v}(w_{l,l} j(twu), I_l)=
\tilde{\xi}_{1-s}^{i,v}(w_{l,l} j(t'wu'), I_l).$$ Therefore, we have that
\begin{align*}
    &\sum_{w\in B_{l-1}(\SO_{2l})}\int_{T_2\times U_{\SO_{2l}}} B_m(g,f_{\tilde{w}_{l-1}}) \tilde{\xi}_{1-s}^{i,v}(w_{l,l}j(g), I_{l})dtdu \\=&\sum_{w\in B_{l-1}(\SO_{2l})}\int_{T_1\times U_{\SO_{2l}}} B_m^c(g,f_{\tilde{w}_{l-1}}) \tilde{\xi}_{1-s}^{i,v}(w_{l,l}j(g), I_{l})dtdu,
\end{align*}
 and hence from Equation \eqref{eqn T_1+T_2},
\begin{align*}
    0=&\sum_{w\in B_{l-1}(\SO_{2l})}\int_{T_1\times U_{\SO_{2l}}} (B_m(g,f_{\tilde{w}_{l-1}})+B_m^c(g,f_{\tilde{w}_{l-1}})) \tilde{\xi}_{1-s}^{i,v}(w_{l,l}j(g), I_{l})dtdu.
\end{align*}

{ Let $K_m^{\SO_{2l+1}}$ be a compact open subgroup of $\SO_{2l+1}$ for which $\tilde{\xi}_{1-s}^{i,v}(w_{l,l}j(g), I_{l})$ is right-invariant. Consider the set $K_m'$ defined before Lemma \ref{lem compact gen}. Since $B_m+B_m^c$ is smooth, we have $(B_m+B_m^c)(gk,f_{\tilde{w}_{l-1}})=(B_m+B_m^c)(g,f_{\tilde{w}_{l-1}})$ for any $k\in K_m'.$ Also, note that $K_m'\subseteq K_m^{\SO_{2l}}$ and hence has finite volume. It follows that
\begin{align*}
    0=&\sum_{w\in B_{l-1}(\SO_{2l})}\int_{T_1\times U_{\SO_{2l}}\times K_m'} (B_m(gk,f_{\tilde{w}_{l-1}})+B_m^c(gk,f_{\tilde{w}_{l-1}})) \tilde{\xi}_{1-s}^{i,v}(w_{l,l}j(gk), I_{l})dtdu.
\end{align*}}

Recall that
 \begin{align*}
     A&=\left(\begin{matrix}
& \frac{\gamma}{2}(1-a) \\
\mathrm{diag}(t_{l-1}\inv,\dots,t_{1}\inv)(w')^* &
\end{matrix}\right)^* \\
&=\left(\mathrm{diag}(\frac{\gamma}{2}(1-a),t_{l-1}\inv,\dots,t_{1}\inv)\left(\begin{matrix}
    &  1 \\
    (w')^* &
\end{matrix}\right)\right)^*.
 \end{align*}
 For such $A,$ we have $A\in T_{\GL_l}w''$ where $w''\in W(\GL_l).$
Let $Y'$ be the subset of $\GL_l$ defined by 
the image of the map $tw\mapsto A$ and set $Y:=\gamma Y'.$ The map $tw\mapsto \gamma A$ gives a bijection from $T_1$ to $Y,$ which denote by $\zeta.$ 
{ Let $p$ denote the composition of the embedding $j:T_1 B_{l-1}(\SO_{2l}) U_{\SO_{2l}} K_m'\rightarrow V_l L_l w_l V_l$ with the projection $V_l L_l w_l V_l\rightarrow L_l\cong GL_l.$ Note that $p(T_1 B_{l-1}(\SO_{2l}) U_{\SO_{2l}})\subseteq Y$ and $K_m'':=p(K_m')\subseteq K_m^{\GL_l}.$ Set $X=U_{\GL_l}Y U_{\GL_l} K_m''.$ We define a function $f:\GL_l\rightarrow\mathbb{C}$ by $f(u_1yu_2k)=\psi(u_1)(B_m(\zeta\inv(y)m_l(u^*)\tilde{w}_{l-1},f_{\tilde{w}_{l-1}})+B_m^c(\zeta\inv(y)m_{l}(u^*)\tilde{w}_{l-1},f_{\tilde{w}_{l-1}})),$ where $u_1,u_2\in U_{\GL_l},$ $y\in Y,$ and $k\in K_m'',$ and $f(x)=0$ if $x\not\in X.$ We remark that $f$ is well-defined and smooth (this follows similarly to the argument in the split case in \cite[p. 38]{HL22b}).

Then
\begin{align*}
0=&\sum_{w\in B_{l-1}(\SO_{2l})}\int_{T_1\times U_{\SO_{2l}}\times K_m'} (B_m(gk,f_{\tilde{w}_{l-1}})+B_m^c(gk,f_{\tilde{w}_{l-1}})) \tilde{\xi}_{1-s}^{i,v}(w_{l,l}j(gk), I_{l})dtdu \\
=&\int_{U_{\GL_l}\backslash \GL_l} f(A)|\mathrm{det}(A)|^{\frac{1}{2}-s}W_v^*(A) dA.
 \end{align*}
Finally, from Lemma \ref{JS Prop}, we have $f(x)=0$ for any $x\in X.$ In particular,
$$
B_m(twu,f_{\tilde{w}_{l-1}})+B_m^c(twu,f_{\tilde{w}_{l-1}})=0
$$
for any $t\in T_1$, $w\in \mathrm{B}_{l-1}(\SO_{2l}),$ and $u\in U_{\SO_{2l}}.$ Conjugation by $c$ fixes $\mathrm{B}_{l-1}(\SO_{2l})$ and sends $T_1$ to $T_2$. Thus, it follows that 
$$B_m(twu,f_{\tilde{w}_{l-1}})+B_m^c(twu,f_{\tilde{w}_{l-1}})=0
$$
for any $t\in T_2$ and $w\in \mathrm{B}_{l-1}(\SO_{2l}).$ Since $T_l=T_1\cup T_2,$ we have proven the theorem. }
\end{proof}

Similarly, we have the equality of twisted gamma factors of $\pi$ and  $\pi^c$. We again remark that this can be implied by the work of \cite{Art13, JL14}, but our proof gives an intrinsic argument.

\begin{cor}\label{conj gamma3}
Let $\pi$ be an irreducible  $\psi$-generic supercuspidal representation of $\SO_{2l}$. Then $\gamma(s, \pi\times\tau,\psi)=\gamma(s, \pi^c\times\tau,\psi)$ for all irreducible $\psi^{-1}$-generic representations $\tau$ of $\GL_{n}$ for any $n\leq l.$ 
\end{cor}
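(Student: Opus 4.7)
The plan is to reduce to the case $n=l$ via Corollaries~\ref{conj gamma1} and~\ref{conj gamma2}, and then to adapt the proof of Theorem~\ref{l thm} to the pair $(\pi,\pi^c)$, using a carefully chosen matrix coefficient for $\pi^c$. Specifically, I would fix $f\in\mathcal{M}(\pi)$ with $W^f(I_{2l})=1$ and choose $f^c\in\mathcal{M}(\pi^c)$ so that $B_m(g,f^c)=B_m^c(g,f)$ for every $g\in\SO_{2l}$ and every sufficiently large $m$. The existence of such an $f^c$ rests on the surjectivity of the Whittaker transform $\mathcal{M}(\pi^c)\to\mathcal{W}(\pi^c,\psi)$, which follows from the construction $f^c(g)=\langle\pi^c(g)v,v^\ast\rangle$ together with the uniqueness of $\psi$-Whittaker functionals on the irreducible generic supercuspidal $\pi^c$; the target partial Bessel function $(W^f_m)^c$ lies in $\mathcal{W}(\pi^c,\psi)$ by Lemmas~\ref{lemma partial Bessel in Whittaker model} and~\ref{lemma conj bessel} and takes value $1$ at $I_{2l}$. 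Applying Theorem~\ref{l-2 theorem} to $(\pi,\pi^c)$ yields $f_{\tilde{w}_{l-1}}\in C_c^\infty(\Omega_{\tilde{w}_{l-1}},\omega)$ with $B_m(g,f)-B_m(g,f^c)=B_m(g,f_{\tilde{w}_{l-1}})$. Combining this equation with its $c$-conjugate and using $B_m^c(g,f^c)=B_m(g,f)$ (which follows from the involutive nature of $W\mapsto W^c$ together with the defining property of $f^c$), one obtains the key identity
\[B_m(g,f_{\tilde{w}_{l-1}})+B_m^c(g,f_{\tilde{w}_{l-1}})=0\]
for every $g\in\SO_{2l}$ and all sufficiently large $m$.

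Next, I would fix an irreducible $\psi^{-1}$-generic representation $\tau$ of $\GL_l$ and sections $\xi_s^{i,v},\,\tilde\xi_{1-s}^{i,v}$ as in \S\ref{sections}, and carry out the computation from the proof of Theorem~\ref{l thm} with $\pi'=\pi^c$. The non-intertwined zeta integrals coincide, $\Psi(W_m^f,\xi_s^{i,v})=\Psi(W_m^{f^c},\xi_s^{i,v})$, by the Lemma~\ref{6.3}(1) argument together with the invariance of $t_{l-1}(a)$ under the $c\tilde{t}^{-1}(\cdot)\tilde{t}c$ conjugation. The difference of the intertwined zeta integrals equals $\int B_m(g,f_{\tilde{w}_{l-1}})\,\tilde\xi_{1-s}^{i,v}(w_{l,l}j(g),I_l)\,dg$, and the purely formal reduction from the proof of Theorem~\ref{l thm}---the partition $T_l=T_1\sqcup T_2$ combined with the change of variables $t\mapsto ctc$, neither step requiring any $\gamma$-factor hypothesis---rewrites this as a nonzero constant multiple of
\[\sum_{w\in\mathrm{B}_{l-1}(\SO_{2l})}\int_{T_1\times U_{\SO_{2l}}}\bigl(B_m(g,f_{\tilde{w}_{l-1}})+B_m^c(g,f_{\tilde{w}_{l-1}})\bigr)\tilde\xi_{1-s}^{i,v}(w_{l,l}j(g),I_l)\,dt\,du,\]
which vanishes by the key identity above. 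The local functional equations for $\pi$ and $\pi^c$ then give $\bigl(\gamma(s,\pi\times\tau,\psi)-\gamma(s,\pi^c\times\tau,\psi)\bigr)\Psi(W_m^f,\xi_s^{i,v})=0$, and choosing $\tau,v$ so that the zeta integral is not identically zero in $s$ yields the equality.

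The main obstacle will be producing the matrix coefficient $f^c$ whose Howe-smoothed Whittaker transform is exactly $B_m^c(\cdot,f)$. The subtlety is that the naive pointwise choice $f^c(g)=f(cgc)$ does \emph{not} have this property, because the Whittaker transform $f\mapsto W^f$ intertwines the character $\psi$ rather than $\psi_c$ on $U_{\SO_{2l}}$; one must instead invoke the surjectivity of the Whittaker transform, together with the genericity and supercuspidality of $\pi^c$, to exhibit an $f^c$ realizing the prescribed Bessel function. Once this is settled, the remainder of the argument is a direct adaptation of the machinery already developed in the proof of Theorem~\ref{l thm}.
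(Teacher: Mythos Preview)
Your proposal is correct and coincides with the paper's approach: the paper's terse instruction to ``repeat the steps and the change of variables'' from Theorem~\ref{l thm} amounts precisely to choosing a matrix coefficient $f^c$ for $\pi^c$ whose partial Bessel function is $B_m^c(\cdot,f)$, from which your key identity $B_m(\cdot,f_{\tilde w_{l-1}})+B_m^c(\cdot,f_{\tilde w_{l-1}})=0$ follows and forces the intertwined integrals to agree. Two small remarks: your ``main obstacle'' dissolves upon taking $f^c(g)=f(c\tilde t^{-1}g\tilde t c)$, which is visibly a matrix coefficient of $\pi^c$ satisfying $W^{f^c}=(W^f)^c$ (the $\tilde t$-twist repairs exactly the character mismatch you flagged); and the restriction to $T_l$ in the proof of Theorem~\ref{l thm} also requires discarding the $a=-1$ locus via Theorem~\ref{l-1 thm}, so $\mathcal{C}(l-1)$ (available by Corollary~\ref{conj gamma2}) is in fact used there, contrary to your parenthetical.
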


\begin{proof}
By Corollaries \ref{conj gamma1} and \ref{conj gamma2}, $\gamma(s, \pi\times\tau,\psi)=\gamma(s, \pi^c\times\tau,\psi)$ for all irreducible $\psi^{-1}$-generic representations $\tau$ of $\GL_{k}$ for $k\leq l-1.$ Repeating the steps and the change of variables performed in the proof of Theorem \ref{l thm} show that the zeta integrals for $\pi$ and $\pi^c$ are equal and hence so are the $\gamma$-factors for the twist by $\GL_l.$
\end{proof}

Finally, we can complete the proof of the converse theorem. From Theorem \ref{l thm}, for any $g\in\SO_{2l},$ we have
$$
B_m(g,f)-B_m(g,f')+B_m^c(g,f)-B_m^c(g,f')=0.$$ 
By the uniqueness of Whittaker models, we must have $\pi\cong\pi'$ or $\pi\cong \pi'^c$ (see \cite[Theorem 8.2]{HL22a} or \cite[Theorem 8.1]{Haz23a}). This completes the proof of Theorem \ref{converse thm intro}.

\bibliographystyle{amsplain}
\bibliography{QS_local_converse_even_orthog}

\end{document}